\documentclass{article}
\usepackage{amsmath, amsthm, amssymb, setspace, graphics}
\usepackage[all]{xy}
\usepackage{url}
\textwidth16cm
\textheight22.5cm
\topmargin-1cm
\evensidemargin0cm
\oddsidemargin0cm
% definitions specific to this author guide only

%

%\theoremstyle{plain} % 'this is the initial setting and can be omitted here'
\newtheorem{thm}{Theorem}[subsection] % number like 3.1, 3.2, 3.3, etc.
\newtheorem{pro}[thm]{Proposition} % numbered with thm
\newtheorem{lem}[thm]{Lemma} % numbered with thm
\newtheorem{cor}[thm]{Corollary} % numbered with thm
\newtheorem{conj}[thm]{Conjecture}% numbered with thm
\theoremstyle{definition} % 'here we change the style'
\newtheorem{defn}[thm]{Definition} % numbered with thm
\theoremstyle{remark} % 'style changed again'
\newtheorem{rem}[thm]{Remark}% numbered with thm
\newtheorem{conv}[thm]{Convention}% numbered with thm
\newtheorem{notn}[thm]{Notation}% numbered with thm
\newtheorem{exa}[thm]{Example}% numbered with thm

\hyphenation{quasi-smooth sub-maximal explicit geometry
irreduc-ible rigorous criterion em-bedded never-theless}

\newcommand{\CC}{\mathbb C}

\newcommand{\NN}{\mathbb N}
\newcommand{\PP}{\mathbb P}
\newcommand{\QQ}{\mathbb Q}

\newcommand{\UU}{\mathbb U}

\newcommand{\ZZ}{\mathbb Z}

\newcommand{\Ocal}{\mathcal O}
\newcommand{\Ccal}{\mathcal C}
\newcommand{\Lcal}{\mathcal L}
\newcommand{\Hcal}{\mathcal H}
\newcommand{\Fcal}{\mathcal F}
\newcommand{\Tcal}{\mathcal T}
\newcommand{\Vcal}{\mathcal V}

\newcommand{\Dcal}{{\mathcal D}}
\newcommand{\Scal}{{\mathcal S}}
\newcommand{\iii}{{h}}
\newcommand{\rest}[1]{{\textstyle|}_{#1}}

\newcommand{\pt}{\operatorname{pt}}

\newcommand{\mult}{\operatorname{mult}}

\newcommand{\age}{\operatorname{age}}
\newcommand{\Res}{\operatorname{Res}}
\newcommand{\al}{\alpha}

\newcommand{\fie}{\varphi}

\newcommand{\ga}{\gamma}
\newcommand{\la}{\lambda}

\newcommand{\ev}{{\rm ev}}

\newcommand{\ctop}{{c}_{\mathrm{top}}}

\newcommand{\cxi}{\mathtt{i}}

\newcommand{\irr}{\operatorname{irr}}

\newcommand{\tw}{{\rm tw}}
\newcommand{\un}{{\rm un}}
\newcommand{\ch}{{\rm ch}}

\newcommand{\textsum}{{\textstyle{\sum}}}

\providecommand{\abs}[1]{\lvert#1\rvert}

\def\vir{{\rm vir}}

\def\pmmu{{\pmb \mu}}

\def\W{\text{W}}
\def\RW{\operatorname{FJRW}}
\def\GW{\operatorname{GW}}

\def\ol{\overline}

\def\wt{\widetilde}

\newcommand{\MMM}{\overline{\mathcal M}}

\newcommand{\RRR}{\mathcal R}
\begin{document}

\title{
\textbf{Landau--Ginzburg/Calabi--Yau correspondence for quintic
three-folds via symplectic transformations}}
\author{Alessandro Chiodo and Yongbin Ruan}
\date{ }
\maketitle

\begin{abstract}
{We compute the recently introduced Fan--Jarvis--Ruan--Witten
theory of $W$-curves in genus zero for quintic polynomials in five
variables and we show that it matches the Gromov--Witten
genus-zero theory of the quintic three-fold via a symplectic
transformation. More specifically, we show that the $J$-function
encoding the Fan--Jarvis--Ruan--Witten theory on the A-side equals
via a mirror map the $I$-function embodying the period integrals
at the Gepner point on the B-side. This identification inscribes
the physical Landau--Ginzburg/Calabi--Yau correspondence within
the enumerative geometry of moduli of curves, matches the
genus-zero invariants computed by the physicists Huang, Klemm, and
Quackenbush at the Gepner point, and yields via Givental's
quantization a prediction on the relation between the full higher
genus potential of the quintic three-fold and that of
Fan--Jarvis--Ruan--Witten theory.}
\end{abstract}

\vspace*{6pt}\tableofcontents

\section{Introduction}\label{sect:intro}
    During the last twenty years, mirror symmetry has
    been one of the most inspirational problems arising from physics.
    There are various formulations of mirror symmetry and each one
    is important in its
    own way. The most classical version proposes
    a conjectural duality in the context of
    Calabi--Yau (CY) complete intersections of toric varieties,
    which interchanges quantum cohomology with
    the Yukawa coupling of the variation of the Hodge structure. In
    particular, it yields a striking prediction of the genus-zero
    Gromov--Witten  invariants encoding the enumerative geometry
    of stable maps from curves to these CY manifolds. The
    most famous example is the quintic three-fold defined by a
    single degree-five homogeneous polynomial
    $$W=x^5_1+x^5_2+x^5_3+x^5_4+x^5_5,$$
    for which the genus zero predictions have been completely proven
    \cite{Gi} \cite{LLY}.

    In the early days of mirror symmetry, physicists
    noticed that the defining equations of CY
    hypersurfaces or complete intersections---such as the above quintic
    polynomial---appear naturally in another context; namely, the
    Landau--Ginzburg (LG) singularity model. The argument has been made on physical
    grounds \cite{VW} \cite{Wi3} that there should be a
    Landau--Ginzburg/Calabi--Yau (LG/CY) correspondence connecting
    CY geometry to the
    LG singularity model. In this context,
    CY manifolds are considered from the point of view of
    Gromov--Witten theory; this correspondence would therefore inevitably
    yield new predictions on Gromov--Witten invariants and is likely
    to greatly simplify their calculation (it is generally believed that
    the LG singularity model is easier to compute). Here, we
    should mention that in
    Gromov--Witten theory several general methods
    have been recently found \cite{LR} \cite{MP}  and
    can in principle determine Gromov--Witten invariants in a wide range of cases.
    These methods, however, are hard to put into practice both
    when calculating a single invariant and when
    one needs to effectively compute the full higher genus
    Gromov--Witten theory.
    The genus-one theory has been
    computed only recently by A.~Zinger after a great deal of hard
    work \cite{Zi}. Computations in higher genera are out of
    mathematicians' reach for the
    moment. There is a physical method put forward
    by Huang--Klemm--Quackenbush \cite{HKQ}. They worked in the B-model, \emph{i.e.} on
    the complex moduli space of the mirror quintic three-fold,
    and provided the A-model predictions from mirror symmetry: their computations
    interestingly combines the potential at the ``large complex structure  point'' (mirror
    of the Calabi--Yau quintic) with the ``Gepner point''
    potential.
    The regularity of the latter
    yields predictions for the quintic three-fold up to $g=51$. Unlike the ``large complex structure point''
    the invariants
    encoded by this ``Gepner point'' potential lack a
    mirror geometrical interpretation in terms of
    enumerative geometry
    of curves.

    In this current unsatisfactory state of affairs, a natural
    idea is to push through the LG/CY
    correspondence in enumerative geometry of curves and
    use the computational power of the LG singularity model as an effective
    method for determining the higher genus Gromov--Witten invariants of the quintic
    three-fold. At
    first sight, it seems surprising that this idea
    has escaped attention for twenty years;
    however, a
    brief investigation reveals that the problem is far more
    subtle than one might expect. To begin with, the
    LG/CY correspondence is
    a physical statement concerning conformal field
    theory and lower energy effective theories: it does not
    directly imply an explicit geometric prediction.
    At a more fundamental level, Gromov--Witten
    theory embodies all the relevant information on the CY-side,
    whereas
    it is not clear which theory plays the
    same role on the LG-side.
    Identifying such a counterpart to Gromov--Witten theory is the
    first step towards establishing a geometric LG/CY correspondence and
    is likely to be interesting in its own right.
    For instance, in a different context, the LG/CY correspondence led to identify
    matrix factorization as the LG counterpart of the derived category
    of complexes of coherent
    sheaves of CY varieties \cite{H} \cite{Orl} \cite{Ko1}.

    In \cite{FJR1, FJR2, FJR3}, a candidate quantum theory of singularities
    has been constructed.
    The formulation of this theory is very
    different from Gromov--Witten theory.
    Naively, Gromov--Witten theory can be thought of
    as solving the Cauchy--Riemann
    equation $\ol{\partial}f=0$ for
    the map $f\colon \Sigma\rightarrow
    X_W$, where $\Sigma$ is a compact Riemann surface and
    $X_W$ is the weighted projective hypersurface $\{W=0\}$.
    In many ways, the difficulty and the interest of
    the computation of Gromov--Witten invariants comes from the
    fact that $X_W$ is a
    nonlinear space. On the other hand,
    the polynomial $W$ in $N$ variables in the LG singularity model
    is treated as
    a holomorphic function on $\CC^N$. Since there
    are no nontrivial holomorphic maps from a compact Riemann
    surface to $\CC^N$, we run into difficulty at a
    much more fundamental level. The solution comes from
    regarding singularities from a rather different point of view.
    In the early 90's,
    Witten conjectured \cite{Wi1} and Kontsevich proved \cite{Ko} that the intersection theory
    of Deligne and Mumford's moduli of curves
    is governed by the KdV integrable
    hierarchy---\emph{i.e.} the integrable system corresponding to the $A_1$-singularity.
    Witten also pursued the generalization of
    Deligne--Mumford spaces to new moduli spaces governed by
    integrable hierarchies attached to other singularities.
    To this end, he proposed, among other approaches,
    a remarkable partial differential equation of the form
    $$\ol{\partial}s_j+\overline{{\partial_jW}(s_1, \cdots, s_N)}=0,$$
    where $W$ is a quasihomogeneous polynomial and $\partial_jW$ stands for the
    partial derivative with respect to the $j$th variable. A
    comprehensive moduli theory of the above Witten equation has
    been established by Fan, Jarvis, and Ruan \cite{FJR1, FJR2, FJR3}.
    Besides extending Witten's 90's conjecture
    (see \cite{FSZ} for $A_n$-singularities and \cite{FJR1} for all
    simple singularities),
    one outcome of Fan--Jarvis--Ruan--Witten  theory
    is that it plays the role of Gromov--Witten theory
    on the LG-side for any quasihomogeneous
    singularity. In this perspective, the Witten equation
    should be viewed as the counterpart in the LG-model to the
    Cauchy--Riemann equation: we replace a linear equation on a
    nonlinear target with a nonlinear equation on a linear target.

    With the LG-counterpart of Gromov--Witten theory understood, there are
    two remaining issues: $(i)$ Is this LG-side really easier to
    compute? $(ii)$ What is the precise mathematical statement for the
    LG/CY correspondence in terms of Gromov--Witten theory?
    For $(i)$, there is ample evidence that this is
    indeed the case. For example, Fan, Jarvis, and Ruan computed their
    theory for ADE-singularities, thereby  establishing
    Witten's original conjecture claiming that the theory is governed by
    the ADE-hierarchies.
    For $(ii)$, motivated by a similar conjecture for crepant resolution of orbifolds
    \cite{CR} and \cite{CIT}, a natural conjecture \cite{R} can be formulated in terms of
    Givental's Lagrangian cones and quantization. In this paper we
    state it for the quintic three-fold
    and we establish it in genus zero via a symplectic transformation.
    We achieve this, by computing Fan--Jarvis--Ruan--Witten theory
    for the quintic polynomial singularity $\{W=0\}$; the result
    supplies a geometrical interpretation in terms of
    enumerative geometry of curves
    of Huang, Klemm, and Quackenbush's genus-zero potential at the
    ``Gepner point'' (Remark \ref{rem:HKQ}).
    Furthermore, the formula for the symplectic transformation $\mathbb U$,
    makes the higher genus statement explicit,
    for $g\ge 0$ the correspondence is expected to
    be carried out via Givental's quantization of $\mathbb U$.

\subsection{The main result}\label{sect:mainres}
    The LG/CY correspondence concerns quasihomogeneous, or weighted homogeneous,
    polynomials $W$ in $N$ variables:
    \begin{equation}\label{eq:quasihomindices}
    \exists c_1,\dots,c_N,d>0\ \mid \qquad W(\lambda^{c_1}x_1, \dots, \lambda^{c_N}x_N)=\lambda^{d} W(x_1,
    \dots, x_N) \qquad \forall \la \in \CC.\end{equation}
    The geometric object considered on the CY side is
    a Calabi--Yau hypersurface inside the weighted projective space $\PP(c_1,\dots,c_N)$.
    On the LG-side,
    we regard $W$ as the equation of
    an isolated singularity in the
    affine space $\CC^{N}$.
    In this paper, we focus on the Fermat quintic polynomial $x_1^5+x_2^5+x_3^5+x_4^5+x_5^5$.

    The Fan--Jarvis--Ruan--Witten
    genus-zero invariants of the $W$-singularity (LG side)
    $$\langle \tau_{a_1}(\phi_{\iii_1}),\dots,\tau_{a_{n-1}}(\phi_{\iii_{n-1}}),
    \tau_{a_n}(\phi_{\iii_{n}})\rangle^{\RW}_{0,n}$$
    and the Gromov--Witten genus-zero invariants of the $W$-hypersurface  (CY side)
    $$\langle \tau_{a_1}(\fie_{\iii_1}),\dots,\tau_{a_{n-1}}(\fie_{\iii_{n-1}}),
    \tau_{a_n}(\fie_{\iii_{n}})\rangle^{\GW}_{0,n,\delta}$$
    are intersection numbers defined (see \eqref{eq:FJRWinv} and \eqref{eq:GWinv})
    for any $(a_1,\dots, a_n)\in \NN^n$ and for any
    entry of the
    state spaces of the two theories: $H_{\RW}=\oplus_\iii \phi_\iii \CC$ and $H_{\GW}=\oplus_\iii \fie_\iii \CC$.
    These two sets of numbers
    arise from two entirely different problems
    of enumerative geometry of curves.
    The definition of the
    Gromov--Witten invariants $\langle \quad \rangle ^{\GW}$
    is well known: the essential ingredient is
    the moduli space of stable maps to the Calabi--Yau variety.
    The Fan--Jarvis--Ruan--Witten invariants
    $\langle \quad \rangle ^{\RW}$ have been recently introduced \cite{FJR1}
    and are based on a generalization of Witten's
    moduli space parametrizing $d$-spin curves: curves equipped with
    a line bundle $L$, which is a $d$th root of the cotangent
    bundle\footnote{For marked curves we should rather refer to
    the log-canonical
    bundle obtained by twisting the dualizing sheaf at the markings.} (\emph{i.e.} $L^{\otimes d}=\omega$).
    The general moduli problem considered in \cite{FJR1} is as follows:
    for $W$ satisfying \eqref{eq:quasihomindices},
    we endow the curve with $N$ line bundles $L_1,
    \dots, L_N$ which are $d$th roots of
    $\omega^{\otimes c_1},\dots, \omega^{\otimes c_N}$
    (the line bundles satisfy further
    relations in terms of $W$; namely for each monomial $x_1^{m_1}\dots x_N^{m_N}$ of
    $W$ we require $L_1^{\otimes m_1}\otimes \dots\otimes L_N^{\otimes m_N}\cong \omega$).
    We specify this definition in Section \ref{sect:FJRW} in the case of the Fermat quintic polynomial.

    The two sets of invariants can be incorporated into
    the Fan--Jarvis--Ruan--Witten partition function and into the
    Gromov--Witten partition function, which, by standard
    techniques,
    can be
    reconstructed from the
    generating functions of the
    one-point descendants:
    the invariants with not more than one entry $\tau_a(\phi_\iii)$
    and $\tau_a(\fie_\iii)$
    having $a\ne0$:
    $\langle \tau_{0}(\phi_{\iii_1}),\dots,\tau_{0}(\phi_{\iii_{n-1}}),
    \tau_a(\phi_{\iii_{n}})\rangle^{\RW}$
    %\qquad \text{
    and
    %}\qquad
    $\langle \tau_{0}(\fie_{\iii_1}),\dots,\tau_{0}(\fie_{\iii_{n-1}}),
    \tau_a(\fie_{\iii_{n}})\rangle^{\GW}.$
    In other words, the two theories are determined by the
    $J$-functions
    \begin{align}\label{eq:JforRW}
    J_{\RW}(\textsum_{\iii} t^\iii_0 \phi_\iii,z)&=z\phi_0+\sum _{\iii}t_0^\iii \phi_\iii +
    \sum_{\substack{n\ge 0 \\ {(\iii_1,\dots, \iii_n)}}}
    \sum_{\substack{ \epsilon, k}}\frac{t_{0}^{\iii_1}\cdots t_{0}^{\iii_n}}{n!z^{k+1}}
    \langle \tau_{0}(\phi_{\iii_1}),\dots,\tau_{0}(\phi_{\iii_n}),\tau_k(\phi_\epsilon)
    \rangle^{\RW}_{0,n+1}\phi^\epsilon,\\
    J_{\GW}(\textsum_{\iii} t^\iii_0 \fie_\iii,z)&=z\fie_0+
    \sum _{\iii}t_0^\iii \fie_\iii +
    \sum_{\substack{n\ge 0\\ \delta \ge 0\\
    {(\iii_1,\dots, \iii_n)}}}
    \!\!\!\!\sum_{\substack{ \epsilon, k }}\frac{t_{0}^{\iii_1}\cdots t_{0}^{\iii_n}}{n!z^{k+1}}
    \langle \tau_{0}(\fie_{\iii_1}),\dots,\tau_{0}(\fie_{\iii_n}),\tau_k(\fie_\epsilon)
    \rangle^{\GW}_{0,n+1,\delta}\fie^\epsilon,
    \end{align}
    which can be regarded as terms of
    $H_{\RW}((z^{-1}))$ and $H_{\GW}((z^{-1}))$,
    \emph{i.e.}
    Laurent series with coefficients in $H_{\RW}$ and $H_{\GW}$.

    The LG/CY correspondence claims that the two $J$-functions can be
    determined from each other.
    The correspondence relies on an
    isomorphism at the level of state spaces $H_{\RW}$ and $H_{\GW}$ on which the
    two $J$-functions are defined.
    In fact, the state space
    $H_{\GW}$,
    the Chen--Ruan cohomology of the hypersurface, and the state space $H_{\RW}$,
    an orbifold-type Milnor ring $\mathcal H_{\RW}(W)$ (see equation \eqref{eq:sspace}),
    are isomorphic for all
    weighted CY hypersurfaces \cite{ChiR}.
    In this paper we only need the quintic three-fold case, where
    the isomorphism can be easily shown, see Example \ref{exa:isosspace}.
    In fact, for $W=x_1^5+x_2^5+x_ 3^5+x_4^5+x_5^5$,
    further simplifications occur: for dimension reasons (see Sections \ref{sect:enum} and
    \ref{sect:setup}),
    the two
    $J$-functions can be uniquely determined by their restrictions to
    two lines lying in the degree-2 part of the respective
    state spaces identified via the above isomorphism: the line
    $t_0^1\phi_1$ and the line $t_0^1\fie_1$. In this way, the restriction takes the form of a one parameter
    function. In Fan--Jarvis--Ruan--Witten theory, we have $$J_{\RW}(t_0^1\phi_1,z)=z\phi_0+t_0^1\phi_1+ \sum_{n\ge 0}
     \sum_{\substack{ \epsilon, k}}\frac{(t_{0}^{1})^n}{n!z^{k+1}}
    \langle \tau_{0}(\phi_{1}),\dots,\tau_{0}(\phi_1),\tau_k(\phi_\epsilon)
    \rangle^{\RW}_{0,n+1}\phi^\epsilon.$$ The analogue one-parameter expression for the $J$-function
    also holds on the line $t_0^1\fie_1$ in
    Gromov-Witten theory; we refer to the discussion in Section 3.1, in particular
    Remarks \ref{rem:divisorequation} and
    \ref{rem:CYsimpl}, where we illustrate how in this case the parameter $t_0^1$ only appears in the form
    $q=\exp(t_0^1)$.
%the contribution of $q^\delta$ are relative to the GW invariant of
%   degree-$\delta$ maps.

    On the CY side,
    Givental's mirror symmetry theorem \cite{Gi}  for the
    quintic three-fold
    sets an equivalence between the above $J$-function and
    the $H_{\GW}((z^{-1}))$-valued
    $I$-function
    $$
    I_{\GW}(q,z)=
     \sum_{d\ge 0}zq^{H/z+d}
    \frac{\prod_{k=1}^{5d}(5H+kz) }{\prod_{k=1}^d (H+kz)^5},$$
where $H$ is the cohomology class corresponding to the hyperplane
    section, $q^{H/z}$ should be
read as the expansion $\exp(H\log(q)/z)$ in the cohomology ring,
     and  $q=\exp({t_0^1})$ parametrizes the line
    $\CC\fie_1$ as already mentioned (it is usually regarded as a parameter centred at the
    ``large complex structure point'' of the complex moduli space of
    the quintic three-fold).
    Expanded in the variable $H$, the
    $I$-function assembles the period integrals spanning the space of
    solutions of Picard--Fuchs equation
    \begin{align*}
    &\left[{D_q}^4-5q\prod_{m=1}^4(5D_q+mz)\right]I_{\GW}=0 &&\text{\Bigg(for $D_q=zq\frac{\partial }{\partial q}$\Bigg).}
    \end{align*}
(These period integrals were first systematically studied
in this context by Candelas, De La Ossa, Green, and Parkes in \cite{CDGP}.)
Via an explicit change of variables
    \begin{align*}
    &\log q'=\frac{g_{\GW}(q)}{f_{\GW}(q)}
    &&\text{(with $g_{\GW}$ and $f_{\GW}$ $\CC$-valued and $f_{\GW}$ invertible)}
    \end{align*}
the A-model of the quintic (\emph{i.e.} $J_{\GW}$),
matches the B-model of the quintic (\emph{i.e.} $I_{\GW}$),
via a mirror map
    \begin{align}\label{eq:mirrormap}
    &\frac{I_{\GW}(q,z)}{f_{\GW}(q)}=J_{\GW}\left(q',z\right).
\end{align}
The above explicit relation between $I_{\GW}$ and $J_{\GW}$
determines $J_{\GW}$ only on the one-parameter space $\CC \fie_1$;
however, this determines entirely the $J$-function above via
tautological relations as discussed in Remark \ref{rem:CYsimpl}
and in the proof of Corollary \ref{cor:sympl}.

We provide the same picture on the LG side (a direct consequence of
Thm.~\ref{thm:FJRW-I-funct},
    and Rem.~\ref{rem:deducing_main}-2).
\begin{thm}\label{thm:PF}
    Consider the $H_{\RW}((z^{-1}))$-valued function (where
    $[a]_n=a(a+1)\dots(a+n-1)$)
    $$I_{\RW}(t,z)= z\sum_{k=1,2,3,4} \frac{1}{\Gamma(k)}\sum_{l\ge 0} \frac{{([\frac{k}{5}]_l)^5}
    \ {t^{k+5l}}}{{[k]_{5l}}\ z^{k-1}}\, \phi_{k-1},$$
    whose four summands span the solution space of
    the Picard--Fuchs equation
    \begin{align*}
    &\left[{D_t}^4-5^5t^{-5}\prod_{m=1}^4(D_t-mz)\right]I_{\RW}=0&&
    \text{\Bigg(for $D_t=zt\frac{\partial}{\partial t}\Bigg)$}
    \end{align*}
    and coincide with the period integrals
    at the Gepner point computed by Huang, Klemm, and Quackenbush \cite{HKQ}.
    The above $I$-function and the $J$-function of $\RW$-theory
    are related by an explicit change of variables (mirror map)
    \begin{align*}
    &t'=\frac{g_{\RW}(t)}{f_{\RW}(t)}
    &&\text{(with $g_{\RW}$ and $f_{\RW}$ $\CC$-valued and $f_{\RW}$ invertible)}
    \end{align*}
    satisfying
    \begin{align*}
    &\frac{I_{\RW}(t,z)}{f_{\RW}(t)}=J_{\RW}(t',z).
    \end{align*}
\end{thm}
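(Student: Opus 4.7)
I split the statement into three parts: (a) verifying that the four explicit series satisfy the stated Picard--Fuchs equation and span its solution space; (b) identifying them with the Gepner-point period integrals of \cite{HKQ}; and (c) deriving the mirror-map relation with $J_{\RW}$. Parts (a) and (b) are routine manipulations with the series once $I_{\RW}$ has been written down; the real content of the theorem lies in (c), which is packaged in Theorem~\ref{thm:FJRW-I-funct} together with Remark~\ref{rem:deducing_main}-2.

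For (a), I plug each summand $\frac{1}{\Gamma(k)z^{k-1}}\sum_{l\ge 0}\frac{([k/5]_l)^5}{[k]_{5l}}\,t^{k+5l}\phi_{k-1}$ into the operator $L=D_t^4-5^5 t^{-5}\prod_{m=1}^4(D_t-mz)$. Using $D_t(t^n)=nz\,t^n$, $(D_t-mz)(t^n)=(n-m)z\,t^n$, and the Pochhammer recursions $(k/5+l)[k/5]_l=[k/5]_{l+1}$ and $[k]_{5(l+1)}/[k]_{5l}=\prod_{i=0}^{4}(k+5l+i)$, a coefficient-by-coefficient comparison reduces $L\,I_{\RW}=0$ to the ratio $a_{l+1}/a_l=(k+5l)^4/\bigl(5^5\prod_{i=1}^4(k+5l+i)\bigr)$, which is exactly what the hypergeometric coefficients satisfy. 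Since the four summands begin with distinct powers $t^1,t^2,t^3,t^4$, they are linearly independent, hence form a basis of the four-dimensional solution space of $L$. For (b), the equation $L\,I_{\RW}=0$ is, up to the standard substitution $\psi\propto t^{-1}$, the Picard--Fuchs equation of the mirror quintic read at the Gepner point $\psi=0$; the four explicit series match the four basis periods written down in \cite{HKQ} by comparing leading-order expansions.

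For (c) I use Theorem~\ref{thm:FJRW-I-funct}, which exhibits $I_{\RW}$ as a hypergeometric modification of $J_{\RW}$ lying on Givental's Lagrangian cone $\Lcal$ of FJRW-theory: indeed $J_{\RW}(\tau,z)$ parametrises $\Lcal$, and the modification has the form allowed on the cone. Expanding $I_{\RW}(t,z)=z\,f_{\RW}(t)\phi_0+g_{\RW}(t)\phi_1+O(z^{-1})$ with $f_{\RW}(t)=t+O(t^6)$ invertible and $g_{\RW}(t)=t^2+O(t^7)$, the uniqueness of the point of $\Lcal$ whose polarisation-decomposition has leading term $z\phi_0$ and degree-zero part $\tau$ forces $\tau=(g_{\RW}/f_{\RW})\phi_1$ and $I_{\RW}/f_{\RW}=J_{\RW}(t',z)$ for $t'=g_{\RW}(t)/f_{\RW}(t)$.

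The main obstacle lies upstream, in establishing Theorem~\ref{thm:FJRW-I-funct}: one must evaluate the genus-zero one-point descendant invariants $\langle\tau_a(\phi_\iii)\rangle^{\RW}_{0,n+1}$ for the quintic $W$-singularity. This requires controlling the virtual class on the moduli of $W$-curves carrying five fifth roots $L_1,\dots,L_5$ of $\omega$ subject to the relations of Definition~\ref{defn:Wcurve}, extracting a concavity or twisted-bundle structure amenable to Grothendieck--Riemann--Roch or torus localisation, and recognising the resulting series of one-point invariants as the hypergeometric $I_{\RW}$ of the statement.
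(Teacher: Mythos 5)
Your proposal is correct and takes essentially the same route as the paper: Theorem \ref{thm:PF} is derived there exactly as you do, by citing Theorem \ref{thm:FJRW-I-funct} for the statement that $t\mapsto I_{\RW}(t,-z)$ lies on $\Lcal_{\RW}$, dividing by the invertible coefficient $f_{\RW}=\omega_1^{\RW}$ (using that $\Lcal_{\RW}$ is a cone), and invoking the uniqueness of points of $\Lcal_{\RW}$ of the form $-z\phi_0+\tau+O(z^{-1})$ to produce the mirror map $t'=\omega_2^{\RW}/\omega_1^{\RW}$ (Remarks \ref{rem:deducing_main} and \ref{rem:numbers}). The only difference is cosmetic: the paper treats the Picard--Fuchs and Huang--Klemm--Quackenbush identifications as well-known hypergeometric facts, whereas you verify the coefficient recursion explicitly.
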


     The Picard--Fuchs equation
    in the above statement coincides with that of the quintic three-fold
    for $q=t^{-5}$.
    After the identification $q=t^{-5}$
    of the coordinate patch at $t=0$
    with the coordinate patch at $q=\infty$,
    the two $I$-functions are solutions of the same
    Picard--Fuchs equation.
    Since $I_{\GW}$ and $I_{\RW}$ take values in two isomorphic
    state spaces, we can compute the analytic continuation of $I_{\GW}$
    and obtain two different bases spanning the space of solutions of
    the same Picard--Fuchs
    equation. Therefore, in Section 4, we have the following corollary.

    \medskip

    \noindent\textbf{Corollary \ref{cor:sympl}.} \emph{
    There is a $\CC[z,z^{-1}]$-valued degree-preserving symplectic transformation $\UU$
    mapping $I_{\RW}$ to the analytic continuation of $I_{\GW}$
    near $t=0$. This proves the genus-zero LG/CY correspondence
    (Conjecture \ref{conj},(1)).}

    \medskip

    We have explicitly computed $\UU$ using the Mellin--Barnes method
    for analytic
    continuation, \eqref{eq:FJRWtoGW}.
    In this way, in the case of the quintic three-fold,
    the higher genus LG/CY correspondence
    assumes an explicit form.
    Indeed, in the terms of the second part of
    Conjecture \ref{conj},
    the quantization
    $\widehat{\UU}$ is a differential operator
    which we expect to yield the full higher genus
    Gromov--Witten partition function
    when applied to
    the full higher genus Fan--Jarvis--Ruan--Witten partition function.

\subsection{Organization of the paper}

The rest of the paper is organized in three sections: on $\RW$
theory (Sect.~2), on the LG/CY conjecture (Sect.~3), and on its
proof in genus zero (Sect.~4). In the appendix, we recall known facts
about orbifold curves used in section 2.

\subsection{Acknowledgements}
The first author would like to thank Albrecht Klemm
and Hsian-Hua Tseng for
helpful conversations and \'Etienne Mann and Thierry Mignon
for their remarks on an early version of this paper. A special thank is due to
Dimitri Zvonkine for the collaboration on Givental's quantization \cite{CZ},
which opened the way to the present work.

The second author would like to thank Tom
Coates, Hiroshi Iritani, and Yuan-Pin Lee for many fruitful
conversations on Givental formalism. A special thank goes to Tom
Coates for the collaboration on crepant resolutions \cite{CR}, which
gave many important insights to LG/CY correspondence.
Special
thanks also go to Huijun Fan and Tyler Jarvis for the collaborations
this work is based on.

We would like to thank the referees for their
comments which led to a clearly improved version of this paper.

\section{Fan--Jarvis--Ruan--Witten theory}\label{sect:FJRW}
    We recall the theory for the Fermat quintic polynomial in
    three steps: the state space, the moduli space, and the
    enumerative geometry.

\subsection{The state space associated to the Fermat quintic singularity}\label{sect:FJRWsing}
We treat the
singularity  $\{(x_1,\dots, x_5)\in \CC^5\mid
W(x_1,\dots,x_5)=0\}$ where $W$ is the Fermat quintic polynomial
in five variables
\begin{equation}\label{eq:Wexplicit}
W(x_1,\dots,x_5)=x_1^5+x_2^5+x_3^5+x_4^5+x_5^5.
\end{equation}
We consider a natural symmetry group
attached to $W$ (and to any other quasihomogenous
polynomial, see \cite{FJR1}):
the group of diagonal symmetries $G_W$; we have
$$G_W=\pmmu_{5}\times
\pmmu_{5}\times
\pmmu_{5}\times
\pmmu_{5}\times
\pmmu_{5}\subset U(1)^5,$$
which contains the cyclic order-$5$ subgroup generated by
$$J_W:=\left(\exp({2\pi {\cxi}{1}/{5}}), \dots, \exp({2\pi
\cxi{1}/{5}})\right)\in (\CC^\times)^5$$
(we omit the subscript $W$ if no ambiguity can arise)\footnote{In the general theory, applying to any quasihomogeneous polynomial,
$G_W$ is a finite abelian subgroup of $U(1)^N$
and
the exponents defining $J$ are given by $2\pi \cxi$ times
the charges $q_1,\dots, q_N$
of the variables satisfying
$\la W(x_1,\dots,x_N)=W(\la^{q_1}x_1,\dots,\la^{q_N}x_N)$.)}.

FJRW theory is an analogue of Gromov--Witten theory attached to
a pair $(W, H)$,
where $H\subseteq G_W$ contains $J$.
The state space is a bigraded vector space of the form
$\Hcal_{\RW}(W,H)=\bigoplus_{\ga\in H} \Hcal_\ga.$
We will define the summands for
the Fermat quintic polynomial and for the group $\langle J\rangle$;
the general setup may be found in \cite{FJR1}.
Let $(\CC^5)_\ga$ denote the linear space
which is fixed by $\ga$, and let $N_\ga$ be its dimension.
Let $W_\ga$ be $W\rest{(\CC^5)_\ga}$ and
let
$W^{+\infty}_\ga$ denote $({\Re} W_{\gamma})^{-1}
\left(]\rho, +\infty[\right)$ for $\rho \gg 0$.
We have
\begin{equation}\label{eq:sspace}
\Hcal_{\RW}(W,\langle J\rangle )=\bigoplus_{\ga\in \langle J\rangle}
\Hcal_\ga, \qquad \text{where}\qquad
\Hcal_\ga=H^{N_\ga}((\CC^5)_\ga,W_\ga^{+\infty};\CC)^{\langle J\rangle},
\end{equation}
and $H^*(\ \  )^{\langle J\rangle}$
stands for the subspace of $\langle J\rangle$-invariant elements
(we may regard the natural $\langle J\rangle$-action
as a monodromy action).
As in Chen--Ruan cohomology, we should introduce a twist
with respect to the standard bidgegree of the forms in $\Hcal$.
Recall that a diagonal action $\mu$ on $\CC^M$ of eigenvalues
$\exp(2\pi\cxi h_1), \dots, \exp(2\pi\cxi h_M)$ (with
$h_j \in [0,1[$) is said to be of \emph{age}
$$\age(\mu)=\textsum_{1\le j\le M} h_j\in \QQ.$$

For a class $\alpha \in \Hcal_{\gamma}$, we shift the bidegree by
$(\age(\ga)-1,\age(\ga)-1)$.
In this way the total degree is
\begin{equation}\label{eq:degshift}
\deg_W(\alpha)=\deg(\alpha)+ 2\age(\ga)-2.\end{equation}
The state space $\Hcal_{\RW}(W,\langle J\rangle)$ carries a natural Poincar\'e pairing
$(\ ,\ )_{\RW}$, see \cite{FJR1}.

\begin{notn} We will usually
write $\mathcal H_{\RW}(W)$ omitting the group,
because we usually refer to the
state space of $W$ with
respect to $\langle J{}\rangle$.
\end{notn}

\begin{rem}\label{rem:conditionCY} In the
geometric LG/CY correspondence, we consider a CY
hypersurface
$X_W=\{W=0\}$ inside the weighted projective stack
$\PP(c_1,\dots,c_N)$.
The state space on the CY-side is the Chen--Ruan cohomology of the stack
$X_W$.
In general, there exists a degree-preserving
  vector space isomorphism between
    $\mathcal H_{\RW}(W)$ and the Chen--Ruan cohomology of the
    weighted projective CY hypersurface of equation $W=0$, see \cite{ChiR}.
In this paper we only need the case of the
quintic polynomial, which we analyze in the following example.
\end{rem}

    \begin{exa}\label{exa:isosspace}
    For the Fermat quintic polynomial and the cyclic group
    $G=\langle J{}\rangle$ of order 5, let us compute,
    for each element $J^{m}=(e^{2\pi\cxi m/5},\dots,e^{2\pi\cxi m/5})$
    with $m=0,\dots,4$,  the space
    $\Hcal_{J^{m}}$ and the degree of its elements.
    The degree is even if and only if $m$ does not vanish.

    Let $m\ne 0$. The index
    $\age(e^{2\pi\cxi m/5},\dots,e^{2\pi\cxi m/5})$ equals $m$.
    Therefore, for $\gamma=J^{m}$
    all the elements of $\Hcal_\gamma$ have degree
    $2m-2$.
    The group $\Hcal_\gamma$ in all these cases
    has a single generator because $\CC^5_\gamma$ equals $\{0\}$; we denote by
    $e(J^{m})\in \Hcal_{J^{m}}$ its generator, which can be
    regarded as the constant
    function $1$ on $\CC^5_\gamma$. In this way we obtain four elements
    of degree $0,2,4$ and $6$,
    which correspond to
    the generators of $H^0(X_W;\CC)$,
    $H^2(X_W;\CC)$, $H^4(X_W;\CC)$, and $H^6(X_W;\CC)$.

    Let $m=0$. We have $\Hcal_{J^0}=H^5(\CC^5, W^{+\infty};\CC)^{\langle J\rangle}$,
    which is isomorphic
    to the degree-$3$ cohomology group of $X_W$ (by
    the theory of the Milnor fibre, see for instance \cite{ChiR}).
    By definition,
    the total degree of the elements of $\Hcal_{J^0}$ is $3$.
\begin{equation}\label{eq:localindicesforWG}
\begin{tabular}{c||c|c|c|c|c|}
   & $ \forall \al \in \Hcal_{J^0}$ &$ e(J)\in \Hcal_{J}$ &$e(J^2)\in \Hcal_{J^2}$ & $e(J^3)\in \Hcal_{J^3} $ & $e(J^4)\in \Hcal_{J^4} $
   \\
   \hline
%  \hline&&&&\\
$\deg$ & 3  &
0 & 2 & 4 & 6\\
\end{tabular}\ .
\end{equation}
    Therefore, we have a degree-preserving vector space isomorphism
    \begin{equation}\label{eq:statespaceiso}
    \Hcal_{\RW}(W)\cong H^*(X_W; \CC)\end{equation}
    compatible with Poincar\'e pairings on both sides.
    The left hand side does not have any natural multiplication in the
   classical sense. FJRW theory builds upon
    the above isomorphism \eqref{eq:statespaceiso}
    and provides a quantum multiplication which turns
    the vector space isomorphism into a quantum ring isomorphism.
\end{exa}

\subsection{The moduli space associated to the Fermat quintic singularity}
FJRW theory is a theory of curves equipped with roots of the
log-canonical line bundle and its powers. The study of moduli of roots requires
the notion of orbifold curves (Appendix \ref{appendix} summarizes the standard results
on this type of curves needed in this paper).

\paragraph{The moduli stack.} In $\RW$ theory,
a moduli functor is
attached to any quasihomogenous nondegenerate polynomial $W$ as mentioned in Section \ref{sect:mainres}.
In this paper we focus on the Fermat quintic polynomial and the setup is much simpler.
The moduli functor classifies $5$-stable curves (\emph{i.e.} orbifold curves
with order five cyclic stabilizers at markings and  nodes, see Definition \ref{defn:dstable}), equipped with five fifth roots of
the log-canonical line bundle: $L_1,
L_2,L_3,L_4,L_5$ satisfying $(L_j)^{\otimes 5}\cong \omega_{\log}$ all $j$.
We write $W_{g,n,G_W}$ for this moduli functor; we have
$$W_{g,n,G_W}=\RRR_5\times_{\MMM_{g,n,5}}\RRR_5
\times_{\MMM_{g,n,5}}\RRR_5\times_{\MMM_{g,n,5}}\RRR_5\times_{\MMM_{g,n,5}}\RRR_5,$$
\emph{i.e.} an \'etale cover of the moduli stack of $5$-stable curves $\MMM_{g,n,5}$ (the stack of $5$th roots $\RRR_5$
of $\omega_{\log}$ is defined in the most natural way: $5$-stable curves with a line bundle
whose $5$th tensor power is isomorphic to $\omega_{\log}$---see Appendix \ref{appendix}, \eqref{eq:droots}).

\paragraph{The marking's type.}
The definition of
$W_{g,n,G_W}$ as the fibred product of
five copies of $\RRR_{5}$,
allows us to generalize the notion of
type of a marking already defined in the literature for
markings of spin curves,
\emph{i.e.} for the
stack $\RRR_5$.
The stack
$\RRR_{5}$ is the disjoint union
\begin{equation}\label{eq:spincompo}
\RRR_5=
\bigsqcup_{0\le \Theta_1,\dots, \Theta_n<1}
\RRR_5(e^{2\pi \cxi\Theta_1},\dots,e^{2\pi\cxi\Theta_n})),
\end{equation}
where \begin{multline*}
\RRR_5(e^{2\pi \cxi\Theta_1},\dots,e^{2\pi\cxi\Theta_n})=
\\\bigg\{(C;\sigma_1,\dots,\sigma_n; L; \fie)\mid \fie\colon
L^{\otimes 5}\xrightarrow{\ \sim \ } (\omega_{\log})
 \text{\ \  and \ \ $\Theta_i=\mult_{\sigma_i} L\in [0,1[ \ \ \  \forall i$}\bigg\}.\end{multline*}
The index $\mult_{\sigma_i}L$ is determined by the local indices of the
universal $5$th root $L$
at the $i$th marking $\sigma_i$.
More explicitly, the local picture of
$L$ over $C$ at the $i$th marking
$\sigma_i$ is parametrized by the pairs $(x,\la)\in \CC^2$, where
$x$ varies along the curve and $\la$ varies along the fibre of the
line bundle.
The stabilizer $\pmmu_{5}$
at the marking acts as $(x,\la)\mapsto (\exp({2\pi \cxi/5})
x, \exp({2\pi \cxi\Theta_i})\la)$.
In this way, the local picture provides an explicit definition of $\Theta_1,\dots,\Theta_n$.

As a straightforward consequence,
we get a decomposition of the stack $W_{g,n,G_W}$
into several connected components obtained
via  pullback of
the fibred products of the components of type \eqref{eq:spincompo}
for the moduli stacks $\RRR_{5}$. We
restate this decomposition in the following definition.
\begin{defn}
Fix $n$ multiindices
with five entries $\gamma_i=(e^{2\pi\cxi \Theta^i_1},\dots,e^{2\pi\cxi \Theta^i_5})
\in U(1)^5$ for
$i$ equal to $1,\dots, n$, $j=1,\dots,5$, and $\Theta^i_j\in [0,1[$.
Then
$W(\gamma_1,\dots,\gamma_n)_{g,n,G_W}$
is the stack of objects of $W_{g,n,G_W}$ satisfying
the relation
$\Theta^{i}_{j}=\mult_{\sigma_i} L_j,$
where $\Theta^{i}_{j}$ is the $j$th entry of
$\gamma_i$.
\end{defn}
\begin{pro}\label{pro:decomp}Let $n>0$.
The stack $W_{g,n,\langle J \rangle}$ is the disjoint union
$$W_{g,n,G_W}=
\bigsqcup_{\gamma_1,\dots,\gamma_n \in U(1)^5}
W(\gamma_1,\dots,\gamma_n)_{g,n,G_W}.$$
The stack
$W(\gamma_1,\dots,\gamma_n)_{g,n,G_W}$ is nonempty if and only if
\begin{equation}\label{eq:degcond}\begin{cases}
\gamma_i=(e^{2\pi\cxi \Theta^i_1},\dots,e^{2\pi\cxi \Theta^i_5})\in G_W&i=1,\dots,n;\\
\frac15(2g-2+n)-\sum_{i=1}^n \Theta^i_j\in \ZZ&j=1,\dots,5.
\end{cases}\end{equation}
As a consequence,
its degree as an \'etale cover of the stack of $5$-stable curves equals $\abs{G_W}^{2g}/5^5$.
\qed\end{pro}

\begin{proof} The proof is an immediate consequence of
\cite[Rem.~2.2.14]{FJR1} and of the torsor structure
of the category of $5$th roots, see \cite{chstab}.
\end{proof}

\begin{notn}[markings of Ramond and Neveu--Schwartz type]
All the objects of the moduli stack
     $W(\gamma_1, \cdots,
     \gamma_n)_{g,n,G_W}$ have $n$ markings whose local
     indices are $\gamma_1,\dots, \gamma_n$. This allows us
     to introduce the notion of marking of Neveu--Schwarz (NS) type
     and Ramond (R) type.
     $$\text{A marking with local index $\gamma=(e^{2\pi\cxi \Theta_1},\dots,e^{2\pi\cxi \Theta_N})$ is }
     \begin{cases}\text{NS} & \text{if $e^{2\pi\cxi \Theta_j}\ne 1 \ \forall j$}\qquad \ (\Leftrightarrow (\CC^5)_\gamma=\{0\}),\\
     \text{R} & \text{otherwise.}\end{cases}$$
\end{notn}

\begin{exa}
We consider for example
$3$-pointed genus-$0$
curves equipped with five $5$th roots of $\omega_{\log}\cong \Ocal(-2+\text{$3$ markings})\cong\Ocal(1)$.
Let us consider a case where all these roots are isomorphic to each other
and have local index $\xi_5^{2}$ at each marking.
Note that this case occurs by Proposition \ref{pro:decomp} and that each marking is of NS type.
Similarly, the stack of
$5$-pointed genus-$0$
curves equipped with five isomorphic $5$th roots of $\omega_{\log}\cong\Ocal(-2+\text{$5$ markings})=\Ocal(3)$
of type $\xi_5^0=1$ at the first marking and $\xi_5^{2}$ at the four
remaining markings
is nonempty by equation \eqref{eq:degcond} of Proposition
\ref{pro:decomp}. Here, the first marking is of R type whereas the remaining
markings are NS as before.
\end{exa}

\paragraph{The substack with respect to a group $H$.}
In perfect analogy with the state space $\Hcal_{\RW}(W,H)$ of
the previous subsection, the
moduli stacks are attached to each pair $(W,H)$ with $H$ containing $J=\xi_5\mathbb I$
and included in $G_W$. We refer to
\cite{FJR1} for a general definition of
$W_{g,n,H}$, an open and closed substack of $W_{g,n,G_W}$.
In this paper, we focus on the case where $H=\langle J\rangle$, where
$W$ is identified as the image of the natural map
\begin{align*}
\RRR_5&\to W_{g,n,G_W}\\
(C;L)&\mapsto (C;(L,L,L,L,L)).
\end{align*}
The stack $W_{g,n,\langle J\rangle}$
fits in the following diagram
\begin{equation}\label{eq:corelocus}
\RRR_5 \ \overset{\ }\twoheadrightarrow \
W_{g,n,\langle J\rangle}\  \overset{\ }{\hookrightarrow}\ W_{g,n,G_W}.
\end{equation}The surjective morphism $\RRR_5\to W_{g,n,\langle J\rangle}$ is
locally isomorphic to $B\mu_5\to B{(\pmmu_{5})^5}$.

\begin{rem}
As in Proposition \ref{pro:decomp}, for $n>0$, we have
     $$W_{g,n,\langle J\rangle}=
     \bigsqcup_{\gamma_1,\dots,\gamma_n\in \langle J\rangle}
     W(\gamma_1,\dots,
     \gamma_n)_{g,n,\langle J\rangle},$$
     where $\gamma_i\in \langle J\rangle$ is the local index at the
     $i$th marked point.
\end{rem}

\subsection{The enumerative geometry of the Fermat quintic singularity}\label{sect:enum}
In this section we develop the enumerative geometry of $W_{g,n,\langle J\rangle}$:
we consider tautological classes and we focus on the $\RW$ invariants.

\paragraph{Tautological classes.}
We recall that the moduli stacks
$W_{g,n,\langle J\rangle}$ are equipped with
the classes $\psi_i$:
$$\psi_i\in H^2(W_{g,n,\langle J\rangle})\qquad \text{for $i=1,\dots, n$}.$$
There are many ways to define these classes, the most
straightforward (following \cite[\S8.3]{AGV})
is to consider the first Chern class of the
bundle whose fibre at a point is the
cotangent line to the corresponding \emph{coarse} curve at the $i$th marked point.
We can also define the kappa classes; simply set
$$\kappa_h=\pi_*(c_1(\omega_{\log})^{h+1})\in H^{2h}(W_{g,n,\langle J\rangle})\qquad \text{for $h\ge 0$},$$
where $\pi$ is the universal curve $\Ccal_{g,n}\to W_{g,n,\langle J\rangle}$.
If we identify each stack $W_{g,n+1}(\gamma_1,\dots,\gamma_n,1)$
to the universal curve $\pi \colon \Ccal \to W_{g,n,\langle J\rangle}(\gamma_1,\dots,\gamma_n)$,
then we can
express $\kappa_h$ as $\pi_*(\psi_{n+1}^{h+1})$.

Let us write the degree-$2h$ term of the Chern character as
$$\ch_h(R\pi_*\mathcal L)\in H^{2h}(W_{g,n,\langle J\rangle}),$$
where $\Lcal$ is the universal $5$th root of the log-canonical bundle of the universal
curve of $W_{g,n,\langle J\rangle}$.
Indeed, we can express this cohomological class
in terms of psi classes and kappa classes.

The statement requires further analysis of the boundary locus $\partial W_{g,n,\langle J\rangle}$,
the substack of $W_{g,n,\langle J\rangle}$ representing singular curves.
The normalization $N(\partial W_{g,n,\langle J\rangle})$
of this locus can be regarded as the stack
parametrizing pairs (objects of $W_{g,n,\langle J\rangle}$, nodes)
in the universal curve.
Finally, we consider the double \'etale cover
$\Scal$ of this normalization; namely,
the moduli space of triples of the form (objects of
$W_{g,n,\langle J\rangle}$, node, a branch of the node).
The stack $\Scal$ is naturally equipped with two line bundles
whose fibres are the cotangent lines to the chosen branch
of the \emph{coarse} curve. We write
$$\psi, \psi'\in H^2(\Scal)$$
for the respective first Chern classes.
Note that in this notation
we privilege the coarse curve because in this way the classes
$\psi, \psi'$ are more easily related to the classes $\psi_i$ introduced above.
Recall, see for example \cite[\S2.2]{CZ}, that
a $5$th root of $\omega_{\log}$ at a node of a $5$-stable
curve determines local indices $a, b\in [0,1[$
such that $a+b\in \ZZ$ in one-to-one correspondence with
the branches of the node. In this way on $\Scal$,
which naturally maps to $W_{g,n,\langle J\rangle}$,
the local index attached to the chosen branch determines
a natural
decomposition into open and closed substacks
and natural restriction morphisms
\begin{equation*}\label{eq:decompsing1}
\Scal=\bigsqcup_{\Theta=0,1/5,\dots, 4/5}
\Scal_\Theta,\qquad
\qquad j_\Theta\colon \Scal_\Theta \to W_{g,n,\langle J\rangle}.
\end{equation*}
\begin{pro}\label{pro:GRR}
Let $\ch_h$ be the degree-$2h$ term of
the restriction of the Chern character  to
the stack
$W(\gamma_1,\dots, \gamma_n)_{g,n,\langle J\rangle}$,
where $\gamma_i=(e^{2\pi\cxi \Theta^i_{1}},\dots,
e^{2\pi\cxi \Theta^i_{5}})$ for $\Theta^i_j\in [0,1[^5$.
We have
$$\ch_h(R\pi_* \mathcal L)=\frac{B_{h+1}
(\frac15)}{(h+1)!}\kappa_h-
\sum_{i=1}^n
\frac{B_{h+1}({\Theta^i_{j}})}{(h+1)!}\psi_i^h+
\frac{5}{2} \sum _{0\le \Theta<1}
\frac{B_{h+1}(\Theta)}
{(h+1)!} (j_\Theta)_* \left(\sum_{a+a'=h-1} \psi^a (-\psi')^{a'}\right).$$
\end{pro}
\begin{proof}
This is an immediate consequence of
the main result of \cite{ch}.
\end{proof}

\paragraph{The FJRW invariants
of $(x_1^5+x_2^5+x_3^5+x_4^5+x_5^5)$.}
     The $\RW$ invariants are defined in \cite{FJR1}
     for any choice of nonnegative
     integers $a_1,\dots, a_n$ (associated to powers of psi classes
     $\psi_1^{a_1}, \dots, \psi_1^{a_n}$)
     and any choice of elements $\al_1,\dots,\al_n\in \Hcal_{\RW}(W,H)$; we
     will simply write
     $$\langle \tau_{a_1}({\al_1}),\dots,\tau_{a_n}({\al_n})
     \rangle^{\RW}_{g,n}$$
     omitting $W$ and $H$, because from now on
     we only consider the quintic polynomial $W$ and the group $H=\langle J\rangle$.
     As in $\GW$ theory the definition of the invariants is
     given by taking the cap product of a
     virtual fundamental cycle with the above mentioned cohomology classes.
     In $\RW$ theory, the virtual cycle is
     $[W(\gamma_1,\dots,\gamma_n)]^{\vir}_{g,n,H}$
     (with $\gamma_i$ such that  $\al_i\in \Hcal_{\gamma_i}$),
     defined on the moduli stack $W(\gamma_1,\dots,\gamma_n)_{g,n,H}$.
     (We simply write $[W]_{g,n,H}^{\vir}$ when we refer to
     the entire stack $W_{g,n,H}$.)

     For any genus $g\ge 0$, this virtual cycle is first capped with
     $\al_1,\dots,\al_n\in \Hcal_{\RW}(W,H)$, yielding\footnote{See formula of the degree ``$r$'' in \cite[Defn.~4.1.5.(1)]{FJR1}; notice that ``$\hat c$''
     equals $3$ for the quintic polynomial by \cite[eq.(42)]{FJR1}} a cycle of
     real dimension
     $2n-\sum_j \deg(\al_j)$,
     and then with powers of
     psi classes $\psi_1^{a_1},\dots, \psi_n^{a_n}$, yielding a zero-dimensional
     cycle if and only if
     \begin{equation}\label{eq:dimensioncompatibility}
     2n-\sum_j \deg(\al_j)-2\sum_j{a_j}=0.\end{equation}
    %$$\langle \tau_{a_1}(\phi_{\al_1}),\dots,\tau_{a_n}(\phi_{\al_n})
    % \rangle^{\RW}_{g,n}= ([W(\gamma_1,\dots,\gamma_n)]^{\vir}_{g,n,G}\cap \prod_j \al_j)\cap \prod_j \psi_j^{a_j}$$
     By the string equation (see \cite[Thm.~4.2.8]{FJR1}) each
     invariant containing one entry $\tau_a(\al)$ with $a=0$ and
     $\al=e(J)$ (see notation of Example \ref{exa:isosspace}) can be expressed in terms
     of the remaining invariants unless it is of the form
     $\langle \tau_{0}(e(J)),\tau_{0}({\al_2}),\dots,\tau_{0}({\al_n})
     \rangle^{\RW}_{g,n}$.
     The latter invariant  vanishes for $(g,n)\neq(0,3)$ and equals $(\al_2,\al_3)_{\RW}$ otherwise
     (see \cite[Thm.~4.2.2.(C4b)]{FJR1}).
     This completely reduces the problem to computing $\RW$
     invariants $\langle \tau_{a_1}({\al_1}),\dots,\tau_{a_n}({\al_n})
     \rangle^{\RW}_{g,n}$ with $\deg(\al_j)\ge 2$ or $a_j\ge 1$ for all $j$.
     Furthermore, \eqref{eq:dimensioncompatibility} implies that
     either $\al=e(J^2)$ and $a=0$, or $\al=e(J)$ and $a=1$.

     \begin{rem}\label{rem:eveninsertions}
     The discussion above yields the following
     two properties for the generating function of all
     $\RW$ invariants of the form $\langle \tau_{a_1}({\al_1}),\dots,\tau_{a_n}({\al_n})
     \rangle^{\RW}_{g,n}$ with $\deg(\al_1), \dots,\deg(\al_n)\in 2\ZZ$ (\emph{i.e.}
     $\al_1,\dots,\al_n\in \Hcal^{\rm ev}_{\RW}(W)$).
     First, this class of invariants spans the entire $\RW$ theory and, second, it
     embodies a self-contained theory.
     More precisely, we have the following statements.
     \begin{enumerate}
     \item The invariants with only even degree insertions span
     the entire $\RW$ theory. This means that
     the state space $\Hcal_{\RW}(W)$ (equipped with its nondegenerate inner Poincar\'e pairing)
     and the set of invariants with only even degree insertions allow us to express
     all the remaining invariants via the string equation.
     \item Furthermore these invariants can be singled out and form a self-contained theory.
     This means that the restriction of the $J$-function from \eqref{eq:JforRW}
     $$J_{\RW}\colon t\mapsto
     J_{\RW}(t,z)\in \Hcal_{\RW}(W)((z^{-1}))$$
     to the even degree entries in $\Hcal^{\rm ev}_{\RW}((z^{-1}))$
     is equal to the $J$-function defined by the same expression \eqref{eq:JforRW}
     with $\phi_\epsilon$ running \emph{only} in $\Hcal^{\rm ev}_{\RW}(W)$.
     This amounts to say that, in $\RW$ theory,
     if all entries but one are of the form $\tau_0(\al)$ with
     $\al\in \Hcal^{\rm ev}_{\RW}(W)$, then the remaining entry $\tau_k(\phi_\epsilon)$ necessarily
     satisfies $\phi_\epsilon\in \Hcal^{\ev}_{\RW}(W)$ or yields a vanishing invariant.
     This claim follows immediately from \eqref{eq:dimensioncompatibility}.
     \end{enumerate}
     This explains why in the next section we will deal
     with
     $J$-functions and Lagrangian cones only for even degree insertions.
     Indeed, a parallel argument, also based on a dimension count, holds for $\GW$
     theory of the quintic three-fold; see Remark \ref{rem:CYsimpl}.
     \end{rem}

     \begin{rem}\label{rem:concavity}
     For genus-zero invariants,
     the case where all insertions $\tau_a(\al)$ satisfy  $\deg(\al)\in 2\ZZ$
     (or, in other words, the case
     where all  insertions are  of NS type)
     can be further detailed: for $m_1,\dots,m_n\in \{1,\dots,4\}$,
     by \cite[Defn.~4.1.1.(5a)]{FJR1},
     the virtual cycle $[W(J^{m_1},\dots,J^{m_n})]^{\vir}_{g,n,G}$
     is Poincar\'e dual to $c_{\rm top}((R^1\pi_*\Lcal)^\vee)^5$. More precisely,
     let $\Lcal$ be the universal line bundle on the universal curve of
     $(A_4)_{0,n}=\RRR_{5}$ mapping
     to $W_{0,n,\langle J\rangle}$. The notation $(A_4)_{g,n}$ is a special case of
     the notation $W_{g,n,\langle J_W\rangle}$ with $W=x^5$, \emph{i.e.} the polynomial attached to the $A_4$-singularity;
     we omit the group in the notation, because there is no ambiguity for $A_4$-singularities.
     Then, for $m_1,\dots,m_n\in \{1,\dots,4\}$, we have
     \begin{equation}\label{eq:concavity}
     \langle \tau_{a_1}({\phi_{m_1-1}}),\dots,\tau_{a_n}({\phi_{m_n-1}})
     \rangle^{\RW}_{0,n}= 5 \left(\prod_{i=1}^n \psi_i^{a_i} \cup
     c_{\rm top}((R^1\pi_*\Lcal)^\vee)^5 \right)\cap [A_4(J^{m_1},\dots, J^{m_n})]_{0,n}.
     \end{equation}
     The fact that the above higher direct image is a vector bundle
     is well known (see \cite{Wi2}), but we recall it for clarity.
     It is enough to show that
     $\Lcal$ has no global sections on every geometric fiber of $\pi$. If the fibre is
     smooth (hence reducible) this follows from the fact that the pushforward
     $p_*\Lcal$ via the map forgetting the stabilizers at the markings
     has negative degree
     on the fibre.
     For a reducible fibre $C$ this happens
     because
     on each irreducible component $Z$
     the degree $d_Z$ of $p_*\Lcal$ is less than the number of points
     meeting the rest of the fibre minus $1$:
     $$d_Z< \#(Z\cap \ol{C\setminus Z})-1.$$
     Notice that this holds only
     because we have $m_1,\dots, m_n>0$ and $g=0$.
     We finally point out that the
     virtual cycle above is
     simply the
     self-intersection of
     five copies of the $5$-spin virtual cycle
     $[A_4(J^m_1,\dots,J^{m_n})]^{\vir}_{g,n}$ in $A_4(J^m_1,\dots,J^{m_n})_{g,n}$.
     \end{rem}

 \section{LG/CY-correspondence for the quintic three-fold}\label{sect:conj}
    The $\RW$ theory of the quintic singularity is definitely different from
    the GW theory of the quintic three-fold, but it also fits in Givental's formalism.
    We recall the construction and we state the LG/CY conjecture for the quintic
    three-fold.

\subsection{Givental's formalism for GW and FJRW theory}\label{sect:setup}
The genus-zero invariants of both theories are encoded
by two Lagrangian cones, $\Lcal_{\GW}$ and $\Lcal_{\RW}$,
inside two symplectic vector spaces, $(\Vcal_{\GW}, \Omega_{\GW})$ and
$(\Vcal_{\RW},\Omega_{\RW})$.
The two symplectic vector spaces also allow us to state the conjectural
correspondence  in higher genera.
By Remark \ref{rem:eveninsertions},
we focus on the even degree part both in
$\GW$ theory and in the $\RW$ theory.
% (in
%the course of the proof
%the sector $J^0$ will play a role, but its contribution will
%disappear in the final result Lemma \ref{lem:assemble})
We recall the two settings simultaneously by using the
subscript $\W$, which can be read as ${\GW}$ or ${\RW}$.

\paragraph{The symplectic vector spaces.}
We define the vector space $\Vcal_{\W}$ and
its symplectic
form $\Omega_{\W}$.

The elements of the vector space $\Vcal_{\W}$
are the Laurent series with values
in a state space $H_{\W}$
$$\Vcal_{\W}=H_{\W}\otimes \CC((z^{-1})).$$
In $\RW$ theory the state space is normally the entire
space $\Hcal_{\RW}(W)$; however, as already mentioned,
    we can focus on even-degree entries
    of $\Hcal_{\RW}(W)$ and $H^*(X_W;\CC)$
    (see Remark \eqref{rem:eveninsertions}).
Therefore, in view of the computation of
the $\RW$ potential we can
restrict to the even-degree part
$$H_{\RW}=\Hcal^{\ev}_{\RW}(W)=
\bigoplus_{\iii=0}^3 e(J^{\iii+1}) \CC,$$ with nondegenerate inner
product $(\  \,,  \ )_{\RW}$ (recall
from equation \eqref{eq:localindicesforWG} that the only element of
odd degree are those of  $\Hcal_{J^0}$).
Similarly, in $\GW$ theory, the state space $H_{\W}$ will be the
even degree cohomology ring of the variety $X_W$, which is
generated by the hyperplane section $H$:
$$H_{\GW}=H^{\ev}(X_W; \CC)=\bigoplus_{\iii=0}^3 [H^\iii] \CC,$$
with a natural nondegenerate inner product
induced by Poincar\'e duality; we denote it by $(\ , \ )_{\GW}$.
We express the  basis of $H_{\W}$ as
$\Phi_0,\dots, \Phi_3$ and the dual basis $\Phi^0,\dots, \Phi^3$ as
$$\Phi_\iii=\begin{cases}\varphi_\iii=[H^\iii] & \text{in GW theory,}\\
\\ \phi_\iii=e(J^{\iii+1}) & \text{in FJRW theory},
\end{cases}\qquad \Phi^{\iii}=\begin{cases}\varphi^\iii=5^{-1}{[H^{3-\iii}]} & \text{in GW theory,}\\
\\ \phi^\iii=e(J^{4-\iii}) & \text{in FJRW theory}.
\end{cases}$$
Clearly $(\Phi_\iii,\Phi^k)_{\W}=\delta_{h,k}$.
\begin{conv}\label{conv:mod5}
When we write $\Phi_k$  or $\Phi^k$ for an
integer $k$ lying outside the above range, we mean $\Phi_\iii$ for $\iii={5\{ \frac k5\}}$,
where $\{\ \}$ is the fractional part.
\end{conv}

The vector space $\Vcal_{\W}$ is equipped with the symplectic form
$$\Omega_{\W}(f_1,f_2)=\Res _{z=0} (f_1(-z),f_2(z))_{\W}.$$
In this way $\Vcal_{\W}$ is polarized as
$\Vcal_{\W}=\Vcal^+_{\W}\oplus \Vcal^-_{\W}$, with
$\Vcal^+_{\W}=H_{\W}\otimes \CC[z]$ and
$\Vcal^-_{\W}=z^{-1}H_{\W}\otimes \CC[[z^{-1}]]$,
and can be regarded as the total cotangent space of
$\Vcal^+_{\W}$. The points of $\Vcal_{\W}$ are parametrized by
Darboux coordinates
$\{q^\iii_{a}, p_{l,j}\}$ and can be written as
$$\sum_{a\ge 0} \sum _{\iii=0}^{3} q^\iii_{a} \Phi_\iii z^a+
\sum _{l\ge 0} \sum_{j=0}^3 p_{l,j} \Phi^j (-z)^{-1-l}.$$

\paragraph{The potentials.}\label{sect:lagcone}
We review the definitions of the
potentials
encoding the invariants of the two theories.

In FJRW theory, the invariants are the intersection numbers
\begin{equation}\label{eq:FJRWinv}
\langle \tau_{a_1}(\phi_{\iii_1}),\dots,\tau_{a_n}(\phi_{\iii_n})
\rangle^{\RW}_{g,n}= \frac{1}{5^{g-1}} \prod_{i=1}^n \psi_i^{a_i}
\cap
\left[W\left(J^{\iii_1+1},\dots,J^{\iii_n+1}\right)\right]_{g,n,\langle J\rangle}^{\vir},
\end{equation}
see Section \ref{sect:enum} and equation \eqref{eq:concavity} for $g=0$.
In GW theory, the invariants are the intersection numbers
\begin{equation}\label{eq:GWinv}
\langle \tau_{a_1}(\fie_{\iii_1}),\dots,\tau_{a_n}(\fie_{\iii_n})
\rangle^{\GW}_{g,n,\delta}= \prod_{i=1}^n \ev^*_i
(\fie_{\iii_i})\psi_i^{a_j} \cap [X_W]_{g,n,\delta}^{\vir}. \end{equation} The
generating functions are respectively
\begin{equation}\label{eq:FJRWpot}
\Fcal_{\RW}^g=
\sum_{\substack{a_1,\dots, a_n\\ \iii_1, \dots, \iii_n}}
\langle \tau_{a_1}(\phi_{\iii_1}),\dots,\tau_{a_n}(\phi_{\iii_n})
\rangle^{\RW}_{g,n}\frac{t_{a_1}^{\iii_1}\cdots t_{a_n}^{\iii_n}}{n!}
\end{equation}and
\begin{equation}\label{eq:GWpot}
\Fcal_{\GW}^g=\sum_{\substack{a_1,\dots, a_n\\ \iii_1, \dots, \iii_n}}\sum _{\delta\ge 0}
\langle \tau_{a_1}(\fie_{\iii_1}),\dots,\tau_{a_n}(\fie_{\iii_n})
\rangle^{\GW}_{g,n,\delta}\frac{t_{a_1}^{\iii_1}\cdots
t_{a_n}^{\iii_n}}{n!}Q^\delta.\end{equation}

In this way, both theories yield a power series
$$\Fcal_{\W}^g=\sum_{\substack{a_1,\dots, a_n\\ \iii_1, \dots, \iii_n}}\sum _{\delta\ge 0}
\langle \tau_{a_1}(\Phi_{\iii_1}),\dots,\tau_{a_n}(\Phi_{\iii_n})
\rangle^{\W}_{g,n,\delta}\frac{t_{a_1}^{\iii_1}\cdots t_{a_n}^{\iii_n}}{n!}Q^\delta$$
in the variables $t_{a}^i$ (for FJRW theory the contribution of
the terms $\delta >0$ is set to zero,
whereas $\langle\ \ \rangle^{\W}_{g,n,0}$
should be read as $\langle\ \ \rangle_{g,n}^{\RW}$).

We can also define the partition function
\begin{equation}\label{eq:totpot}
\Dcal_{\W}=\exp\left(\textsum_{g\ge 0} \hbar^{g-1} \Fcal^g_{\W}\right).\end{equation}

\begin{rem}\label{rem:divisorequation}
By the divisor axiom, $Q $ and $t_0^1$ are related. In fact, they
    appear together in the form $Qe^{t_0^1}$. Now, we assume that
    $\Fcal_{\GW}$ converges when $|Qe^{t_0^1}|$ is sufficiently small
    and set $Q=1$ and $q=e^{t_0^1}$. Therefore, the variable $Q$ will
    be omitted starting from the next paragraph. This can be achieved in
    the region where we have ${\Re}(t_0^1)\ll 0$ (a similar
    and more detailed discussion
    can be found in \cite[``The divisor equation'', p.6]{Coates}).
\end{rem}

\paragraph{The Lagrangian cones.}
Let us focus on the genus-zero potential $\Fcal_{\W}^0$.
The dilaton shift
$$q^{\iii}_a=\begin{cases}
t^0_1-1&\text{if $(a,{\iii})=(1,0)$}
\\t^{\iii}_a &\text{otherwise.}\end{cases}$$
makes $\Fcal^0_\W$ into a power series in the Darboux coordinates
$q^{\iii}_a$. Now we can define $\Lcal_\W$ as
$$\Lcal_\W:=\{\pmb p=d_{\pmb q}\Fcal^0_\W\}\subset\Vcal_\W.$$
With respect to the symplectic form $\Omega_\W$,
the subvariety $\Lcal_\W$ is
a Lagrangian cone
whose tangent spaces
satisfy the geometric condition $zT=\Lcal_\W\cap T$
at any point
(this happens because both potentials
satisfy the equations SE, DE and TRR of \cite{Givental};
in FJRW theory, this is guaranteed by \cite[Thm.~4.2.8]{FJR1}
and will be also observed later as a consequence of Proposition \ref{pro:CZ}).

Every point of $\Lcal_{\W}$
can be written as follows
$$-z\Phi_0+ \sum _{\substack{0\le \iii\le 3\\a\ge 0}}t^{\iii}_a \Phi_\iii z^a+
\sum_{\substack{n\ge 0\\ \delta\ge 0 }}
\ \sum_{\substack{0\le \iii_1,\dots, \iii_n\le 3 \\ a_1,\dots,a_n\ge 0}}
\ \sum_{\substack{0\le \epsilon\le 3\\ k\ge 0 }}\frac{t_{a_1}^{\iii_1}\cdots t_{a_n}^{\iii_n}}{n!(-z)^{k+1}}
\langle \tau_{a_1}(\Phi_{\iii_1}),\dots,\tau_{a_n}(\Phi_{\iii_n}),\tau_k(\Phi_\epsilon)
\rangle^{\W}_{0,n+1,\delta}\Phi^\epsilon,$$
where the term $-z\Phi_0$ performs the dilaton shift.
Setting $a$ and $a_i$ to zero, we obtain the points of the form
\begin{equation}\label{eq:Jlocus}
-z\Phi_0+ \sum _{0\le \iii\le 3}t_0^\iii \Phi_\iii +
\sum_{\substack{n\ge 0\\\delta\ge 0 }} \ \sum_{0\le \iii_1,\dots,
\iii_n\le 3} \ \sum_{\substack{0\le \epsilon\le 3\\ k\ge 0
}}\frac{t_{0}^{\iii_1}\cdots t_{0}^{\iii_n}}{n!(-z)^{k+1}} \langle
\tau_{0}(\Phi_{\iii_1}),\dots,\tau_{0}(\Phi_{\iii_n}),\tau_k(\Phi_\epsilon)
\rangle^{\W}_{0,n+1,\delta}\Phi^\epsilon,
\end{equation}
which define a locus which uniquely determines
the rest of the points of $\Lcal_{\W}$ (via
multiplication by $\exp(\al /z)$ for any $\al \in \CC$---\emph{i.e.} via
the string equation---and via the divisor equation in GW theory).
We define the $J$-function
$$t=\sum_{\iii=0}^3 t_0^\iii \Phi_\iii
\mapsto J_{\W}(t,z)$$ from the state space
$H_{\W}$
to the symplectic vector space $\Vcal_{\W}$ so that
$J_{\W}(t,-z)$ equals the expression \eqref{eq:Jlocus}.
In this way for $t_0^0,t_0^1,t_0^2,t_0^3$ varying in $\CC$
the family $J_{\W}(t=\sum_{\iii=0}^3 t_0^\iii \Phi_\iii,-z)$ varies in $\Lcal_{\W}$.
In fact, this is the only family of elements of $\Lcal_{\W}$ of the
form
$-z+\sum_{\iii=0}^3 t_0^\iii +O(z^{-1}).$

\paragraph{Reconstructing Givental's Lagrangian cone from the small J-function.}
It is well known that the genus-zero GW-theory can be
reconstructed from the {\em big} J-function $t\mapsto
J_{\GW}(t,z)$ via the tautological equations SE, DE and TRR. Here,
{\em big} means that $t$ takes value in the entire state space.
More precisely, Givental's Lagrangian cone can be reconstructed
from knowing that it contains the image  of $t\mapsto
J_{\GW}(t,-z)$ for $t\in H_{\GW}$ (the big $J$-function)---a special family of elements on the cone $\Lcal_{\GW}$. The fact that
$X_W$ is a Calabi--Yau quintic three-fold allows further
reduction: Givental's Langrangian cone can be reconstructed from
the so called {\em small} $J$-function, where {\em small} means
that $t$ takes values only in $H^2(X_W;\CC)=[H]\CC\subset
H_{\GW}$. We detail the arguments on both sides in the following remarks.

    \begin{rem}\label{rem:CYsimpl} Recall that the virtual cycle is $n$-dimensional: therefore
    $\GW$ invariants vanish unless we have $$2n=2\textsum_{i=1}^n a_i+\textsum_{i=1}^n \deg(\fie_{\iii_i}).$$
    (notice that this is precisely the same constraint as \eqref{eq:dimensioncompatibility}
    on the $\RW$ side).
    This condition, together with the string equation eliminating
    $\tau_0(H^0)$ insertions,
    allows us to reduce the computation to the invariants whose entries are
    of the form $\tau_1(H^0)$ or $\tau_0(H^1)$. In this way the
    generating function $\Fcal_{\GW}$ is determined by its restriction to
    the parameters $t_1^0$ and $t_0^1$.
    We can further use the dilaton equation and the divisor equation
    to eliminate $\tau_1(H^0), \tau_0(H^1)$ as well and to
    express everything in terms of the invariant $\langle\ \ \rangle_{0,n,\delta}$.
    More specifically, as mentioned in Remark
    \ref{rem:divisorequation},
    the divisor axiom tells us that
    $t_0^1$ appears in $\Fcal_{\GW}$ in the form $e^{t_0^1}$;
    so, the entire $\GW$ potential is determined
    by a one parameter function in the parameter $q=e^{t_0^1}$.
    \end{rem}

    \begin{rem}\label{rem:FJRWsimpl}
    The same holds for FJRW theory. As we already argued
    in Remark \ref{rem:eveninsertions} we can express
    $\Fcal_{\RW}$ in terms of its restriction on the parameters $t_1^0$ and $t_0^1$.
    We can use the dilaton equation to eliminate $t_1^0$.
    We are left with a single parameter $t_0^1$; the
    difference here is that we can not eliminate $t_0^1$ since
    there is no divisor equation (this is compensated by the absence
    in the theory of $\RW$ invariants $\langle\dots\rangle_{g,n}^{\RW}$ of the parameter
    specifying the degree of the stable map appearing in the theory of $\GW$ invariants
    $\langle\ \dots \rangle_{g,n,\delta}^{\GW}$).
    \end{rem}

\subsection{The conjecture}
The following conjecture can be regarded as a geometric version of
the physical Landau--Ginzburg/Calabi--Yau correspondence
\cite{VW} \cite{Wi3}. A precise
    mathematical conjecture is proposed by the second author
    \cite{R} and applies to a much more general category
    including Calabi--Yau complete intersections inside weighted projective
    spaces. In order to keep the notation simple, we review the conjecture for
    the case of quintic three-folds. The formalism is
    analogous to the
    conjecture of \cite{CR} and \cite{CIT} on crepant resolutions of
    orbifolds and uses Givental's quantization from \cite{Givental}, which is naturally defined
    in the above symplectic spaces $\Vcal_{\RW}$ and $\Vcal_{\GW}$.

\begin{conj}\label{conj}
    Consider the Lagrangian cones $\Lcal_{\RW}$ and $\Lcal_{\GW}$.
    \begin{enumerate}
    \item[(1)] There is a degree-preserving $\CC[z, z^{-1}]$-valued linear symplectic isomorphism
       $$\mathbb U: \Vcal_{\RW}\rightarrow \Vcal_{\GW}$$ and a choice of analytic
       continuation of $\Lcal_{\RW}$ and $\Lcal_{\GW}$
    such that $\mathbb U (\Lcal_{\RW})=\Lcal_{\GW}.$
    \item[(2)] Up to an overall constant, the total potential functions up to a choice of
    analytic continuation are related by
    quantization of $\mathbb U $; {i.e.}
    $$\Dcal_{\GW}=\widehat{\mathbb U}(\Dcal_{\RW}).$$
    \end{enumerate}
\end{conj}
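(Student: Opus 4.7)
The plan is to compute $J_{\RW}$ in explicit hypergeometric form via a twisted $5$-spin computation, check the Picard--Fuchs equation directly on this form, match the resulting components to Huang--Klemm--Quackenbush's Gepner-point periods, and read off the mirror map from the asymptotics at $t=0$.

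First I would reduce the problem to an intersection-theoretic one on the moduli of $5$-spin curves. By Remark \ref{rem:CYsimpl} together with the virtual-codimension formula \eqref{eq:virtualcodim}, the only invariants that can contribute to $J_{\RW}$ beyond the seed terms $z\phi_0 + t\phi_1$ are of the form $\langle \tau_0(\phi_1)^n, \tau_k(\phi_\epsilon)\rangle^{\RW}_{0,n+1}$, and Proposition \ref{pro:assemble} rewrites each of these as
\[
5\int_{\left[\wt A_4\left(\tfrac{2}{5},\dots,\tfrac{2}{5},\tfrac{\epsilon+1}{5}\right)\right]_{0,n+1}} \psi_{n+1}^{k}\,\ctop(R^1\pi_*\Tcal^\vee)^5.
\]
The task then becomes computing a $\Tcal^{\oplus 5}$-twisted genus-zero theory on the stack of genus-zero $5$-spin curves, a setting in which the untwisted invariants are accessible through the Witten--Kontsevich/Faber--Shadrin--Zvonkine description of $r$-spin theory. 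I would next run a Coates--Givental--Tseng-type quantum Riemann--Roch argument to turn this twisted computation into an explicit hypergeometric $I$-function. The key input is Proposition \ref{pro:GRR}, which presents each $\ch_h(R\pi_*\Tcal)$ as a combination of psi-classes, kappa-classes and boundary push-forwards; fed into the standard quantum Riemann--Roch machinery, this produces a symplectic transformation carrying the untwisted $5$-spin $J$-function to the twisted one, whose leading $t$-asymptotics match the claimed $I_{\RW}$ up to a reparametrization.

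Third, once $I_{\RW}$ is in hypergeometric form, verifying $[D_t^4 - 5^5 t^{-5}\prod_{m=1}^4(D_t - mz)]I_{\RW}=0$ reduces to manipulating Pochhammer symbols: $D_t$ multiplies the $t^{k+5l}$-term by $z(k+5l)$, and the coefficient ratio $([k/5]_{l+1}/[k/5]_{l})^{5}\big/([k]_{5l+5}/[k]_{5l}) = (k+5l)^4/\bigl[5^5\prod_{m=1}^{4}(k+5l+m)\bigr]$ is exactly what the equation demands. The four summands $k=1,2,3,4$ have distinct leading monomials $t^k$, are therefore linearly independent, and span the four-dimensional solution space; matching the HKQ Gepner-point periods is then a direct series-to-series comparison, as their Mellin--Barnes presentation expands into the same hypergeometric coefficients. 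For the mirror map, I would expand $I_{\RW}(t,z) = f_{\RW}(t)\,z\phi_0 + g_{\RW}(t)\,\phi_1 + O(z^{-1})$; the series $f_{\RW}$ begins with $t$ and is therefore invertible, one sets $t' = g_{\RW}/f_{\RW}$, and Givental's uniqueness criterion (the only element of the Lagrangian cone $\Lcal_{\RW}$ of the form $z\phi_0 + t'\phi_1 + O(z^{-1})$ is $J_{\RW}(t',z)$; cf.~the discussion after \eqref{eq:Jlocus}) yields $I_{\RW}/f_{\RW} = J_{\RW}(t',z)$ as soon as we know that $I_{\RW}/f_{\RW}$ lies on $\Lcal_{\RW}$ --- which is precisely what the twisted quantum Riemann--Roch construction of Step 2 gives.

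The main obstacle is the twisted hypergeometric modification itself. In Givental's original Gromov--Witten proof for the quintic, everything is driven by the ambient projective space $\PP^4$ and the line bundle $\Ocal(5)$; on the LG side there is no ambient target, so the argument must be carried out intrinsically on $\MMM_{g,n,5}$ with the concavex bundle $\Tcal^{\oplus 5}$ playing the role of $\Ocal(5)$. Adapting quantum Lefschetz/quantum Riemann--Roch to this orbifold $5$-spin setting, and in particular controlling the boundary correction terms $(j_\Theta)_*\!\left(\psi^{a}(-\psi')^{a'}\right)$ of Proposition \ref{pro:GRR} so that they conspire into the clean hypergeometric form of $I_{\RW}$, is where the bulk of the real technical work will lie.
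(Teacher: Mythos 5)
Your plan, as written, proves only half of the statement at hand. What you outline --- compute $I_{\RW}$ by a twisted $5$-spin quantum Riemann--Roch argument, verify the Picard--Fuchs equation by the Pochhammer manipulation (your ratio computation is correct), match the Huang--Klemm--Quackenbush Gepner-point periods, and extract the mirror map via Givental's uniqueness criterion --- is a proof of Theorem \ref{thm:PF}, i.e.\ the LG-side mirror theorem. Indeed it follows essentially the paper's own route to Theorem \ref{thm:FJRW-I-funct}: your Coates--Givental--Tseng-style argument with Proposition \ref{pro:GRR} as input is, in substance, what Proposition \ref{pro:CZ} (quoted from \cite{CZ}) together with Lemma \ref{lem:alpha} accomplish. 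But the statement in question is Conjecture \ref{conj}, whose genus-zero part (1) asserts the existence of a degree-preserving, $\CC[z,z^{-1}]$-valued \emph{symplectic} isomorphism $\UU\colon \Vcal_{\RW}\to\Vcal_{\GW}$ carrying $\Lcal_{\RW}$ to an analytic continuation of $\Lcal_{\GW}$. Nothing in your proposal constructs $\UU$ or even brings the Gromov--Witten cone into the argument.

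The missing step is precisely the content of Corollary \ref{cor:sympl}: one must analytically continue $I_{\GW}$ from the large-complex-structure patch $q=0$ to the Gepner patch $t=0$ (in the paper this is done by rewriting $I_{\GW}$ as a Mellin--Barnes contour integral, closing the contour to the left for $\abs{q}>1/5^5$, and summing the residues at $s=-H/z-m/5$), and then read off the connection matrix \eqref{eq:FJRWtoGW} expressing the continued periods $\wt\omega_k^{\GW}$ in the basis $\omega_k^{\RW}$. Matching $I_{\RW}$ to the HKQ Gepner-point periods does not substitute for this: that identification lives entirely at $t=0$ and never connects to the basis of solutions defining $I_{\GW}$ at $q=0$, which is exactly what $\UU$ must do. Moreover, even granting abstractly that two bases of the four-dimensional solution space of the same ODE are related by \emph{some} linear transformation after continuation along a path, the conjecture requires that this transformation be $\CC[z,z^{-1}]$-valued, degree-preserving (with $\deg z=2$), and symplectic for $\Omega_{\RW}$ and $\Omega_{\GW}$; these properties are verified in the paper on the explicit matrix, whose entries come from the Taylor coefficients of $\Gamma$-factors and of $\xi^k/(e^{-2\pi\cxi H/z}-\xi^k)$, and they are what makes the quantization $\widehat{\UU}$ in part (2) meaningful at all. (Part (2) itself is not established by the paper either --- it remains conjectural --- so your silence there is not a defect; the gap concerns part (1).)
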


    \begin{rem}
    For the readers familiar with the crepant resolution
    conjecture \cite{CR} and \cite{CIT}, an important difference here is the lack
    of monodromy condition.\end{rem}

    By \cite{CR}, a direct consequence of the first part of the
    above conjecture is the following
    isomorphism between quantum rings.

\begin{cor}\label{cor:quantumiso}
For an explicit specialization of the variable $q$ determined by
$\mathbb U$, the quantum ring of $X_W$ is isomorphic to the quantum ring
of the singularity $\{W=0\}$.
\end{cor}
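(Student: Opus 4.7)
The plan is to invoke a standard principle of Givental's formalism: the quantum cohomology product at a point of the small phase space is completely encoded in the tangent space to the overruled Lagrangian cone at the corresponding point. More precisely, for a cone $\Lcal_\W$ satisfying the geometric condition $zT_{\pmb p}\Lcal_\W = \Lcal_\W \cap T_{\pmb p}\Lcal_\W$ recalled in Section \ref{sect:lagcone}, the quotient $T_{\pmb p}\Lcal_\W/zT_{\pmb p}\Lcal_\W$ is canonically identified with the state space $H_\W$, and the operator ``multiplication by $z$'' on $T_{\pmb p}\Lcal_\W$ descends to the quantum product $\star_t$ at the value of $t$ singled out by $\pmb p = J_\W(t,-z)$.

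Granting part (1) of Conjecture \ref{conj}, the argument then runs as follows. The isomorphism $\mathbb U\colon\Vcal_{\RW}\to \Vcal_{\GW}$ is $\CC[z,z^{-1}]$-linear, hence commutes with multiplication by $z$; being symplectic and sending $\Lcal_{\RW}$ onto $\Lcal_{\GW}$, it automatically maps tangent spaces to tangent spaces, i.e.\ $\mathbb U(T_{\pmb p}\Lcal_{\RW}) = T_{\mathbb U \pmb p}\Lcal_{\GW}$ for every $\pmb p\in\Lcal_{\RW}$. Passing to the quotients by $z$, $\mathbb U$ induces a $\CC$-linear isomorphism $H_{\RW}\to H_{\GW}$ intertwining the two multiplication-by-$z$ operators, that is, an isomorphism of quantum product structures. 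Since $\mathbb U$ is degree preserving, this isomorphism is automatically compatible with the grading coming from $\deg_W$ on $\Hcal_{\RW}$ and the cohomological grading on $H^*(X_W)$, already matched at the level of vector spaces by \eqref{eq:statespaceiso}.

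The specialization of the variable $q$ is read off from the point identification. Given $t\in H_{\RW}$ on the FJRW small phase space, the image $\mathbb U\cdot J_{\RW}(t,-z)$ lies on $\Lcal_{\GW}$; by the uniqueness property observed after \eqref{eq:Jlocus}, there is a unique $\tau(t)\in H_{\GW}$ of the form $-z\Phi_0 + \tau(t) + O(z^{-1})$ (after applying the string flow to restore the dilaton-shifted normal form) such that $\mathbb U\cdot J_{\RW}(t,-z) = J_{\GW}(\tau(t),-z)$. The Novikov variable on the GW side is then $q = \exp(\tau(t)^1)$, where $\tau(t)^1$ denotes the coefficient of $\fie_1$ in $\tau(t)$; this is the explicit specialization asserted by the corollary.

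The principal obstacle is not the formal Givental argument but the interpretation of the identity $\mathbb U\cdot J_{\RW}=J_{\GW}(\tau(\cdot),\cdot)$ after analytic continuation: $J_{\RW}$ is naturally a series around the Gepner point $t=0$, whereas $J_{\GW}$ is a series around the large complex structure point $q=0$, and the two are glued only via the analytic continuation supplied by Corollary \ref{cor:sympl} and the explicit Mellin--Barnes formula \eqref{eq:FJRWtoGW}. Once the change of variables $q=t^{-5}$ and the mirror maps $f_{\RW},g_{\RW},f_{\GW},g_{\GW}$ of Theorem \ref{thm:PF} are composed with $\mathbb U$, the identification of quantum rings becomes explicit.
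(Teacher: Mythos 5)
The paper offers no independent derivation of this corollary: it is stated as a direct consequence of Conjecture \ref{conj}(1), with the proof delegated to \cite{CR}. Your proposal reconstructs precisely that Givental-formalism derivation, so the route is the intended one; however, the key principle as you state it is false. Multiplication by $z$ on a tangent space $T=T_{\pmb p}\Lcal_\W$ does \emph{not} descend to the quantum product on $T/zT$: since $z\cdot T\subseteq zT$, it descends to the \emph{zero} operator. The quantum product is not encoded in a single tangent space together with its $\CC[z]$-module structure (that datum is the same for all $t$); it is encoded in how the tangent spaces vary along the cone. Concretely, writing $T_t$ for the tangent space at $J_\W(t,-z)$, one has $T_t/zT_t\cong H_\W$ via $[\partial_{t^\alpha}J_\W]\mapsto \Phi_\alpha$, and the structure constants of $\star_t$ are read off from the relation $z\,\partial_{t^\alpha}\partial_{t^\beta}J_\W=\sum_\epsilon c^{\epsilon}_{\alpha\beta}(t)\,\partial_{t^\epsilon}J_\W$ (up to the sign conventions of \S\ref{sect:setup}); equivalently, quantum multiplication by $\Phi_\alpha$ is the operator induced on the quotients by $z\partial_{t^\alpha}$ acting on the family $\{T_t\}$, i.e.\ by the Dubrovin connection, not by multiplication by $z$.

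With this correction the remainder of your argument does go through and is the argument of \cite{CR}: since $\mathbb U$ is constant in $t$, $\CC[z,z^{-1}]$-linear, and maps $\Lcal_{\RW}$ onto $\Lcal_{\GW}$, it maps $T_t$ onto $T_{\tau(t)}$, where $\tau(t)$ is fixed by the normal form of $\mathbb U\, J_{\RW}(t,-z)$ exactly as in your third paragraph; moreover $\mathbb U$ commutes both with $z$ and with $\partial_{t^\alpha}$, and the chain rule $\partial_{t^\alpha}=\sum_\beta (\partial\tau^\beta/\partial t^\alpha)\,\partial_{\tau^\beta}$ then shows that the induced map $T_t/zT_t\to T_{\tau(t)}/zT_{\tau(t)}$ intertwines the structure constants at $t$ with those at $\tau(t)$. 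This derivative of the mirror map is precisely where the ``explicit specialization of $q$ determined by $\mathbb U$'' enters, and it is also why $z$-linearity of $\mathbb U$ alone cannot suffice: the ring structures live on the variation of the tangent spaces, so one must track how $\mathbb U$ transports $t$-derivatives, not merely powers of $z$. As written, the step ``multiplication by $z$ descends to $\star_t$'' fails, and your conclusion that the induced map is a ring isomorphism does not follow from the sentence preceding it until it is replaced by the connection-level statement above.
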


    We refer readers to \cite{CR} for the derivation of the above isomorphism.

\section{The correspondence for the quintic three-fold in genus zero}\label{sect:comp}

\subsection{Determining the Lagrangian cone of FJRW theory}
\paragraph{The twisted Lagrangian cone.}\label{sect:twistedcone}
This section introduces a symplectic space
$\Vcal_{\tw}$ and a  Lagrangian cone
$\Lcal_{\tw}$, which approximate the symplectic space
$\Vcal_{\RW}$ and the Lagrangian cone $\Lcal_{\RW}$.
This will ultimately allow us to determine the function
$J_{\RW}$ and the entire Lagrangian cone $\Lcal_{\RW}$.
A similar story, with a different symplectic space $\Vcal_{\tw}$,
holds on the Gromov--Witten side,
see \cite{Gi}, \cite{Gib} and the systematic
treatment of twisted potentials in \cite{CG}.
The definition of $\Vcal_{\tw}$ is based on two observations: Lemma \ref{lem:assemble}
and Lemma \ref{lem:limit}.

First, the generating function
$\Fcal_{\RW}^0$ defined on space $\{t_a^h\in \CC \mid a\ge 0, 0\le h\le 3\}$
(\emph{i.e.} on the space $\Vcal_{\RW}^+$),
can be extended the space $\{t_a^h\in \CC \mid a\ge 0, 0\le h\le 4\}$: we simply set
$\langle \tau_{a_1}(\phi_{\iii_1}),\dots,\tau_{a_n}(\phi_{\iii_n})
\rangle^{\RW}_{0,n}=0$ as soon as $\iii_j=4$ for some $j$.
Equation \eqref{eq:concavity} expressing the $\RW$ invariants in terms
of intersections of cycles in $(A_4)_{0,n}$
generalizes to this setting; we need to slightly modify the moduli functor.
%\begin{defn}\label{defn:wtA}
For $m_1,\dots, m_n\in \{1,\dots, 5\}$, consider the stack
 $\wt A_4\textstyle{\left(\frac{m_1}5,\dots,\frac{m_n}5\right)}_{g,n}$
classifying
genus-$g$ $n$-pointed
$5$-stable curves equipped with $5$th roots
\begin{equation}
\label{eq:newmoduli}
\wt A_4\textstyle{\left(\frac{m_1}5,\dots,\frac{m_n}5\right)}_{g,n}:=
\bigg\{(C;(\sigma_i)_{i=1}^n; T; \fie)\mid \fie\colon
T^{\otimes 5}\xrightarrow{\ \sim \ } \omega_{\log}(-D_{\pmb m})
 \text{ and  $\mult_{\sigma_i}(T)=0 \ \forall i$}\bigg\},
 \end{equation}
where $D_{\pmb m}$ is the linear combination $m_1D_1+\dots+m_nD_n$ of
the integer divisors $D_i$ corresponding to the markings $\sigma_i$.
%\end{defn}
\begin{lem}\label{lem:assemble}
Let $\iii_1,\dots,\iii_n\in \{0,\dots, 4\}$. Let $\pi$ be the universal family and
$\Tcal$ be the universal root of the moduli functor
$\wt A_4\textstyle{\left(\frac{\iii_1+1}5,\dots,\frac{\iii_n+1}5\right)}_{0,n}$.
Then, $R^1\pi_* \Tcal$ is locally free and $\pi_*\Tcal$ vanishes.
Furthermore, we have
\begin{equation}\label{eq:getthere2}
\langle \tau_{a_1}(\phi_{\iii_1}),\dots,\tau_{a_n}(\phi_{\iii_n})
     \rangle^{\RW}_{0,n}=
{5} \prod_{i=1}^n \psi_i^{a_i}
c_{\rm top}((R^1\pi_*\Tcal)^\vee)^5 \cap
\left[\wt A_4\textstyle{\left(\frac{\iii_1+1}5,\dots,\frac{\iii_n+1}5\right)}\right]_{0,n}.
\end{equation}
\end{lem}
\begin{proof}
Let us compare the definition of $\wt A_4$ and that
of $A_4$ first. For $m_1,\dots,m_n$
ranging in $[0,4]$ the moduli stack
$\wt A_4\textstyle{\left(\frac{m_1}5,\dots,\frac{m_n}5\right)}_{0,n}$
are canonically isomorphic to
the moduli stacks $A_4(J^{m_1},\dots,
    J^{m_{n}})_{0,n}$ (an easy consequence of
Lemma \ref{lem:exists} and the fact that there is a natural equivalence
between $5$th roots of two line bundles whenever they differ by $\Ocal(-5D_i)$ for some $i$).
It is crucial however, to observe that the universal objects differ:
under the identification between the moduli stacks we may relate the two universal
$5$th roots $\Tcal$ and $\Lcal$ on the same orbifold curve $(\Ccal;\sigma_1,\dots,\sigma_n)$ as
follows
\begin{equation}\Tcal=p_*\Lcal\otimes \Ocal(-\textsum_{i\mid m_i=5} D_i),
\end{equation}
where $p$ is the morphism forgetting the stabilizer at each marking.

With this comparison understood, we already know that the claim is true
if all entries
$\iii_1, \dots, \iii_n$ differ from $4$. In order to prove the rest of the claim it
is enough to show that $c_{\rm top}((R^1\pi_*\Tcal)^\vee)$
vanishes as soon as ${\iii_{i_0}}=4$ for
some $i_0\in \{1,\dots, n\}$.
Consider the integer divisor  corresponding to the
$i_0$th marking $D_{i_0}$ and the exact sequence
$$0\to \Tcal\to \Tcal(D_{i_0})\to \Tcal(D_{i_0})\rest{D_{i_0}}\to 0.$$
We may regard the restriction $\Tcal(D_{i_0})\rest{D_{i_0}}\cong
\Lcal\rest{D_{i_0}}$ as a $5$th root of
$\omega_{\log}\rest{D_{i_0}}\cong \Ocal_{D_{i_0}}$; \emph{i.e.} a
line bundle with trivial first Chern class in rational cohomology
\begin{equation}\label{eq:c1zero}
c_1(\pi_*(\Tcal(D_{i_0})\rest{D_{i_0}}))=0.
\end{equation}

Notice that the restriction of $\Tcal$ to every fibre of the
universal curve has no global sections. Indeed,
the same argument of Remark \ref{rem:concavity} applies: on a
smooth fibre the degree is negative, whereas on a reducible fibre
$C$ we observe that the degree of $\Tcal$ on an irreducible
component $Z$ is less than $\#(Z\cap \ol{C\setminus Z})-1.$ By a
simple induction argument on the number of components, this
implies that there are no nonzero global sections on the
fibres of $\pi$; hence, $R^1\pi_* \Tcal$ is a vector bundle and
 $\pi_*\Tcal$ vanishes.

We further notice that the same holds for $\Tcal(D_{i_0})$.
In fact, for $\Tcal$, the above induction applies
by iteratively removing tails
(rational irreducible components attached to the rest of the
fibre at a single point).
For this argument it is crucial to observe that
$d_Z<\#(Z\cap \ol{C\setminus Z})-1$ simply reads
$d_Z<0$ on a tail. On the other hand, for $\Tcal(D_{i_0})$, it may well happen that the
component carrying the $i_0$th point does not satisfy
$d_Z<\#(Z\cap \ol{C\setminus Z})-1$ because of the twisting at $D_{i_0}$; however, we can still apply
the induction argument by starting from a different tail
(reducible curves of genus zero have at least two tails).

By the vanishing of $\pi_* \Tcal$ and $\pi_*\Tcal(D_{i_0})$,
we now have the following exact sequence of vector bundles
$$0\to \pi_*\Tcal(D_{i_0})\rest{D_{i_0}}\to R^1\pi_*\Tcal \to R^1\pi_*\Tcal(D_{i_0})\to 0,$$
which immediately yields $\ctop(R^1\pi_*\Tcal)=0$ via \eqref{eq:c1zero} as required.
\end{proof}

The second observation allows us to set a relation between the well known
intersections $\prod_i \psi_i^{a_i}$
and the numbers \eqref{eq:getthere2}.
Since the two numbers differ because of the class
$c_{\rm top}((R^1\pi_*\Tcal)^\vee)^5$, we now define  a
class interpolating the fundamental class
$[\wt A_4\textstyle{(\frac{\iii_1+1}5,\dots,\frac{\iii_n+1}5)}]_{0,n}$
and
$c_{\rm top}((R^1\pi_*\Tcal)^\vee)^5\cap [\wt A_4\textstyle{(\frac{\iii_1+1}5,\dots,\frac{\iii_n+1}5)}]_{0,n}$.
\begin{lem}\label{lem:limit}
For any choice of
complex parameters $s_d$ with $d\ge 0$ define on the $K$ theory ring
the cohomology class
\begin{align*}
K_0(X)&\to H^*(X;\CC)\\
x&\mapsto \exp\left(\textsum_{d}s_d\ch_d(x)\right).
\end{align*}
The above class is multiplicative for any $s_d$ with $d\ge 0$.
For $s_d=0 \ \forall d$ we get
the fundamental class, whereas for
\begin{equation}\label{eq:fixs}
s_d=\begin{cases}
-5\ln(\lambda) & d=0,\\\\
\displaystyle{\frac{5(d-1)!}{\la^d}} & d>0.
\end{cases}
\end{equation}
the multiplicative class is related to the equivariant Euler class as follows
$$
\exp\left(\textsum_{d}s_d\ch_d(-[V])\right)=e_{\CC^\times}(V^\vee)^5,$$
where $V$ is a $\CC^\times$-equivariant vector bundle $V$
equipped with the natural
action of $\la\in \CC^\times$ scaling the fibres by multiplication.
In particular the nonequivariant limit
$\lim _{\la\to 0}$ yields $c_{\rm top}(V^\vee)^5$.
\end{lem}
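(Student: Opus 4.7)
The plan is to verify the three assertions of the lemma in order. First, multiplicativity of $x\mapsto\exp(\sum_d s_d\ch_d(x))$ is immediate from the additivity of the Chern character: since $\ch_d(x+y)=\ch_d(x)+\ch_d(y)$ in cohomology, the exponential sends direct sums in $K$-theory to cup products in $H^*(X,\CC)$. The special case $s_d\equiv 0$ gives $\exp(0)=1$, which is the fundamental class. So only the identification with the equivariant Euler class and its nonequivariant limit carry real content.

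For the main equality I would use the splitting principle. Formally decompose $V$ as a sum of line bundles with Chern roots $x_1,\dots,x_r$, and recall that under the $\CC^\times$-action scaling fibres, $e_{\CC^\times}(V^\vee)$ equals $\prod_i(\lambda-x_i)$. Taking logs and expanding $\ln(1-x_i/\lambda)$ as a formal series,
\begin{equation*}
\sum_i\ln(\lambda-x_i)=r\ln\lambda-\sum_{d\ge 1}\frac{1}{d\lambda^d}\sum_i x_i^d.
\end{equation*}
Using $\ch_d(V)=\tfrac{1}{d!}\sum_i x_i^d$, $\ch_0(V)=r$, and $\ch_d(-[V])=-\ch_d(V)$, the right-hand side equals
\begin{equation*}
-\ln(\lambda)\,\ch_0(-[V])+\sum_{d\ge 1}\frac{(d-1)!}{\lambda^d}\,\ch_d(-[V]),
\end{equation*}
which is exactly $\tfrac{1}{5}\sum_d s_d\,\ch_d(-[V])$ for the $s_d$ given in \eqref{eq:fixs}. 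Multiplying through by $5$ and exponentiating yields the desired identity $\exp\!\big(\sum_d s_d\,\ch_d(-[V])\big)=\prod_i(\lambda-x_i)^5=e_{\CC^\times}(V^\vee)^5$.

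The nonequivariant limit is then immediate from this product form: $\lim_{\lambda\to 0}\prod_i(\lambda-x_i)^5=\prod_i(-x_i)^5=c_r(V^\vee)^5=c_{\rm top}(V^\vee)^5$. The only point requiring care is that the individual terms $-5\ln\lambda\cdot\ch_0(-[V])=5r\ln\lambda$ and $\sum_{d\ge 1}\tfrac{5(d-1)!}{\lambda^d}\,\ch_d(-[V])$ are individually singular at $\lambda=0$, but the manipulation above shows they collapse into a polynomial in $\lambda$ of degree $5r$, so the limit is well-defined. This cancellation is really the only subtlety; everything else is a direct application of standard Chern character and equivariant Euler class identities, so I expect the splitting-principle computation above to be the heart of the argument.
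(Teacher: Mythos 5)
Your proof is correct and follows essentially the same route as the paper's: both reduce to line bundles/Chern roots via the splitting principle and match the logarithmic expansion of the equivariant Euler class $\prod_i(\lambda - x_i)$ against the prescribed values of $s_d$, the only cosmetic difference being that the paper first derives the formula for $e_{\CC^\times}(V)$ in terms of $\ch_d(V)$ and then dualizes using $\ch_d(V^\vee)=(-1)^d\ch_d(V)$, whereas you work with the roots of $V^\vee$ directly. Your closing remark about the singular terms collapsing into a polynomial in $\lambda$ is a correct justification of the nonequivariant limit, which the paper simply asserts.
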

\begin{proof}
For any vector bundle $V$
equipped with the $\CC^\times$-action scaling the fibre; the
$\CC^\times$-equivariant Euler class can be expressed in terms of the
nonequivariant Chern character as follows\footnote{The above relation can be easily checked on a line bundle and
extends to any vector bundle by the splitting principle;
for any line bundle $L$
with $\CC^\times$ acting by multiplication along
the fibres, we can express
the $\CC^\times$-equivariant Euler class in terms of the nonequivariant
first Chern class $\la + c_1(L)$ and ultimately as
\begin{multline*}
\la +c_1(L)=\la\left(1+\frac{c_1(L)}{\la}\right)=
\la\exp\left(\ln\left(1+\frac{c_1(L)}{\la}\right)\right)
\\=\la \exp\left(\textsum_{d>0}   (-1)^{d-1}\frac{c_1(L)^d}{\la^d d}\right)
=
\exp\left(\ln(\la)\ch_0(L)+\textsum_{d>0} (-1)^{d-1}\frac{(d-1)!}{\la^d} \ch_d(L)\right).\end{multline*}}
:
$$e_{\CC^\times}(V)=
\exp\left(\ln(\la)\ch_0(V)+\textsum_{d>0} (-1)^{d-1}\frac{(d-1)!}{\la^d} \ch_d(V)\right).$$

Finally, we can show that, with the above parameters $s_d$, we have
\begin{multline*}
e_{\CC^\times}(V^\vee)^5=
\exp\left(5\ln(\la)\ch_0(V^\vee)+
\textsum_{d>0} 5(-1)^{d-1}\frac{(d-1)!}{\la^d} \ch_d(V^\vee)\right)\\=
\exp\left(-5\ln(\la)\ch_0(-[V]^\vee)+\textsum_{d>0}
5\frac{(d-1)!}{\la^d} \ch_d(-[V])\right)
=\exp\left(\textsum_{d}s_d\ch_d(-[V])\right),\end{multline*}
where the relations
$\ch_d(-V)=-\ch_d(V)$ and $\ch_d(V^\vee)=(-1)^d\ch(V)$ have been employed.
\end{proof}

The two previous observations (Lemmata \ref{lem:assemble} and \ref{lem:limit})
justify the
introduction of a fifth generator for the state space,
the definition of the intersection numbers over the moduli stacks
of $\wt A_4$-curves, and the
definition of the  state space over an extended ground ring
$\CC[\la]\otimes \CC[[s_0,s_1,\dots]].$ We start from the twisted symplectic space.

The twisted symplectic vector space
$\Vcal_{\tw}$
 is formed by Laurent series over a modified
 state space. We have
$$\Vcal_{\tw}=H_{\tw}\otimes \CC((z^{-1})),$$
where the state space is modified by adding a new element
$\phi_4$ to the base
$$H_{\tw}=\phi_0 R\oplus \phi_1 R\oplus \phi_2 R\oplus \phi_3 R \oplus \phi_4 R$$
and by working on the extended ground ring
$$R=H^*_{\CC^\times}(\pt;\CC)[[s_0,s_1,\dots ]]$$
of power series in infinitely many variables $s_0,s_1,\dots$ with values in the
ring of $\CC^\times$-equivariant cohomology of a point.
We regard $H^*_{\CC^\times}(\pt;\CC)$ as $\CC[\lambda]$; so
we can identify $R$ with
$\CC[\la]\otimes \CC[[s_0,s_1,\dots]]$.
The nondegenerate inner product $( \ ,\ )_{\RW}$
is extended to $H_{\tw}$ by setting
$$(\phi_4,\phi_\iii)_{\tw} = \begin{cases}\exp(-s_0) &\text{for $\iii=4$}\\
0 &\text{otherwise}
\end{cases}$$
(this choice allows us to use the invariants defined below at
\eqref{eq:twinv} to form a Lagrangian cone $\Lcal_{\tw}$, see in
particular  Proposition \ref{pro:CZ} and the role played by this
pairing in the second part of the proof). In this way we get a
dual basis $\phi^0,\dots, \phi^4$ as soon as we set
\begin{equation}\label{eq:phibase}
\phi^4=\exp(s_0)\phi_4.\end{equation}
We extend Convention \ref{conv:mod5} to these extended bases:
$\phi_{h+5k}=\phi_{h}$ for all $k$.

The symplectic form making $\Vcal_{\tw}$ into a symplectic vector
space is $$\Omega_{\tw} (f_1,f_2)=\Res_{z=0}(f_1(-z),f_2(z))_{\tw}.$$
Again, $\Vcal_{\tw}$ equals
the total cotangent space
$T^*\Vcal^+_{\tw}$, where $\Vcal_{\tw}^+$ is
defined as
$H_{\tw}\otimes \CC[z]$. The Darboux coordinates are
again $\{q^{\iii}_a, p_{l,j}\}$, but this time $\iii$ and $j$ vary in $[0,4]$.
In view of an approximation of $\RW$ invariants
\begin{equation}\label{eq:twinv}
\langle \tau_{a_1}(\phi_{\iii_1}),\dots,\tau_{a_n}(\phi_{\iii_n})
\rangle^{\tw}_{0,n}:= 5 \int_{\wt A_4\left(\frac{\iii_1+1}5,\dots,\frac{\iii_n+1}5\right)_{0,n}}
\prod_{i=1}^n \psi_i^{a_i} \cup \exp\left(\sum_{d\ge 0} s_d \ch_d(R\pi_* \Tcal)\right),
\end{equation}
where we point out that the integral is taken on the standard
fundamental cycle of
$\wt A_4\left(\frac{\iii_1+1}5,\dots,\frac{\iii_n+1}5\right)_{0,n}$.
The definitions of $\Fcal^0_{\W}$, $\mathcal D_{\W}$, and
$\Lcal_{\W}$ of \S\ref{sect:lagcone} generalize word for word and yield
the twisted potential $\Fcal^0_{\tw}$, the partition function $\mathcal D_{\tw}$ and
the locus $$\Lcal_{\tw}\subset \Vcal_{\tw}.$$
In Proposition \ref{pro:CZ}, we
prove that this locus is a Lagrangian cone; let us first remark two special
cases of the construction.
\begin{rem}[the untwisted cone]
If we set $s_d=0$ for all $d\ge 0$, then we get
the untwisted symplectic vector space
$\Vcal_{\un}$ of Laurent series over the state space $H_{\un}:=\bigoplus _{\iii=0}^4 \phi_\iii R$
with $\phi^4=\phi_4$ by \eqref{eq:phibase}.
Correspondingly, we get the
cone $\Lcal_{\un}$
encoding the intersection numbers
$$\langle \tau_{a_1}(\phi_{\iii_1}),\dots,\tau_{a_n}(\phi_{\iii_n})
\rangle^{\un}_{0,n}= 5\int_{\wt A_4\left(\frac{\iii_1+1}5,\dots,\frac{\iii_n+1}5\right)_{0,n}}
\prod_{i=1}^n \psi_i^{a_i}.$$
It is easy to prove by carrying out the calculation directly on $\MMM_{0,n}$
that these numbers
equal
\begin{equation}\label{eq:untwistinv}
\begin{pmatrix}{\sum_i a_i}\\{a_1,\dots,a_n}\end{pmatrix}
\end{equation}
as soon as $n-3=\sum_i a_i$ and
the following selection rule guaranteeing
$\wt A_4\left(\frac{\iii_1+1}5,\dots,\frac{\iii_n+1}5\right)_{0,n}\neq \varnothing$
is satisfied
\begin{equation}\label{eq:selrule}
\qquad 2+\textsum_i \iii_i\in 5\ZZ
\end{equation}
(the factor $5$ in the definition of $\langle\ \ \rangle^{\tw}$ is canceled by
the degree computation $[\wt A_4:\MMM_{0,n}]=1/5$).
The untwisted cone is a Lagrangian cone
whose tangent spaces
satisfy the geometric condition $zT=\Lcal_W\cap T$
at any point (this happens because the
equations SE, DE and TRR of \cite{Givental}
can be easily deduced from the analogue equations on $\MMM_{0,n}$).
\end{rem}
\begin{rem}[the Lagrangian cone $\Lcal_{\RW}$]\label{rem:fixs}
Let us set $s_d$ as in \eqref{eq:fixs}.
In this way we have $\phi_4=\la^{5}\phi^4$.
By Lemma \ref{lem:assemble} the higher direct image
$-R\pi_*\Tcal$ is represented by
the \emph{locally free} sheaf $R^1\pi_*\Tcal$.
By Lemma \ref{lem:limit} we have
$$\exp\left(\textsum_{d}s_d\ch_d(R\pi_*\Tcal)\right)=
\exp\left(\textsum_{d}s_d\ch_d(-R^1\pi_*\Tcal)\right)=e_{\CC^\times}((R^1\pi_*\Tcal)^\vee)^5.$$
Therefore the nonequivariant limit $\lim_{\la\to 0}$ of
the twisted invariants $\langle \tau_{a_1}(\phi_{\iii_1}),\dots,\tau_{a_n}(\phi_{\iii_n})
\rangle^{\tw}_{0,n}$ is the intersection number
$$5 \int_{\wt A_4\left(\frac{\iii_1+1}5,\dots,\frac{\iii_n+1}5\right)_{0,n}}
\prod_{i=1}^n \psi_i^{a_i} \cup (c_{\rm top}(R^1\pi_* \Tcal))^5.$$
As a consequence of Lemma \ref{lem:assemble} we have
\begin{equation}\label{eq:limit}
\lim_{\la\to 0}\left(\Fcal^0_{\tw}\rest{s_d=\ol s_d}\right)=\Fcal^0_{\RW}\end{equation}
when  $\ol s_d$ are the parameters $s_d$ fixed as  in \eqref{eq:fixs}.
Indeed, if
all entries $\iii_i$ are different from $4$ the identification with
the numbers
$\langle \tau_{a_1}(\phi_{\iii_1}),\dots,\tau_{a_n}(\phi_{\iii_n})
\rangle^{\RW}_{0,n}$ is immediate, whereas
if $\iii_i=4$ for some $i$ the limit of the
twisted invariant
$\langle \tau_{a_1}(\phi_{\iii_1}),\dots,\tau_{a_n}(\phi_{\iii_n})
\rangle^{\tw}_{0,n}$ vanishes.

In this way we conclude that the Lagrangian
cone $\Lcal_{\RW}$ can be realized as the nonequivariant limit (for $\la\to 0$)
of $\Lcal_{\tw}$ in the $\{\phi^i\}$-frame
with the parameters $s_d$ set to the above values.
\end{rem}

The following
proposition shows that the entire Lagrangian cone $\Lcal_{\tw}$ can be
reconstructed from $\Lcal_{\un}$.
\begin{pro}\label{pro:CZ}
Consider the symplectic transformation
$\Delta\colon \Vcal_\un \to \Vcal_\tw$
given by the direct sum
\begin{equation}\label{eq:transf}
\Delta=\bigoplus_{i=0}^{4}
\exp \left(\sum_{d\ge 0} s_d \frac{B_{d+1}\left(\frac{i+1}{5}\right)}
{(d+1)!}z^d
\right).\end{equation}
We have \begin{equation}\label{eq:Lagr}
\Lcal_\tw=\Delta (\Lcal_\un).
\end{equation}
\end{pro}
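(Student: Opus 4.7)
The strategy follows the quantum Riemann--Roch formalism of Coates--Givental, adapted to the spin/root setting in the companion work \cite{CZ}. The plan is to prove \eqref{eq:Lagr} infinitesimally: one shows that the derivative $\partial_{s_d}$ of $\Fcal^0_\tw$ corresponds, on the level of Lagrangian cones, to the infinitesimal generator $\partial_{s_d}\ln\Delta$ of the diagonal flow $\Delta$. First I would check that $\Lcal_\tw$ is genuinely a Lagrangian cone satisfying $zT_f=\Lcal_\tw\cap T_f$: the string equation, dilaton equation, and topological recursion relations for $\Fcal^0_\tw$ all descend from their untwisted counterparts on $\MMM_{0,n}$ via the multiplicativity of the class $\exp(\sum s_d\ch_d(R\pi_*\Tcal))$ under pullback by the morphisms forgetting a marking. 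Hence $\Lcal_\tw$ is a well-defined Lagrangian cone, and both cones fit into the same deformation-theoretic framework.

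Next, I would differentiate in $s_d$. The derivative $\partial_{s_d}\Fcal^0_\tw$ amounts to inserting $\ch_d(R\pi_*\Tcal)$ into each twisted correlator. Applying Proposition \ref{pro:GRR} to the universal root $\Tcal$ on $\wt A_4(\tfrac{\iii_1+1}{5},\dots,\tfrac{\iii_n+1}{5})_{0,n}$, and identifying $\Tcal$ with the standard $5$th root $\Lcal$ of $\omega_{\log}$ up to a boundary shift at the markings (Proposition \ref{pro:assemble}), expresses $\ch_d(R\pi_*\Tcal)$ as the sum of three contributions: a kappa-class term with coefficient $B_{d+1}(1/5)/(d+1)!$, a psi-class term $-B_{d+1}((\iii_i+1)/5)/(d+1)!\,\psi_i^d$ at each marking, and a boundary contribution $\tfrac{d}{2}\sum_\Theta B_{d+1}(\Theta)/(d+1)!\,(j_\Theta)_*(\sum_{a+a'=d-1}\psi^a(-\psi')^{a'})$. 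The psi-class contribution at marking $i$ acts on the cone as multiplication by $z^d$ on the $i$-th insertion $\phi_{\iii_i}$ (since $\psi_i$ acts by $z$ on the descendent cotangent coordinates), and this produces exactly the diagonal infinitesimal operator $\bigoplus_i B_{d+1}((i+1)/5)/(d+1)!\,z^d=\partial_{s_d}\ln\Delta$.

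The main obstacle is the handling of the remaining kappa-class and boundary contributions, which do \emph{not} appear in the diagonal formula for $\Delta$ and must therefore be shown to contribute trivially to the flow of $\Lcal_\tw$. For the kappa class, $\kappa_d=\pi_*(\psi_{n+1}^{d+1})$ is reinterpreted via the forgetful map as a shift by an unmarked point carrying insertion $\phi_4$ (the only insertion compatible with the selection rule $2+\sum\iii_i\in 5\ZZ$ when an extra untwisted marking is added); the resulting correction is absorbed by the dilaton shift defining $\Lcal_\tw$ and combines with the Bernoulli-polynomial bookkeeping at the markings. For the boundary term, the two branches of each node carry opposite local indices $\Theta,1-\Theta$, and the pushforward $(j_\Theta)_*(\sum\psi^a(-\psi')^{a'})$ factorizes into a product of lower-point correlators by the splitting axiom; by Givental's gluing lemma this quadratic contribution matches the quadratic Hamiltonian generated on $\Vcal_\tw$ by the symplectic action of $\Delta$, once the symmetry $B_{d+1}(1-\Theta)=(-1)^{d+1}B_{d+1}(\Theta)$ of Bernoulli polynomials is used to reassemble the two sides. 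This combinatorial reconciliation between the GRR formula and the symplectic quantization of $\Delta$ is the technical heart of the proof; it is carried out in the closely parallel spin-curve setting in \cite{CZ}, and transposes here with only cosmetic changes thanks to the fact that in the $\langle J\rangle$-substack all five line bundles of the $W$-structure coincide (Proposition \ref{pro:core}).
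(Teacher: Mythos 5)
Your skeleton---differentiate in $s_d$, use Proposition \ref{pro:GRR} to trade $\ch_d(R\pi_*\Tcal)$ for kappa, psi, and boundary classes, match the psi terms to the diagonal generator $\partial_{s_d}\ln\Delta$, the kappa term to the dilaton-shift correction, and the node terms to the quadratic part of the quantized Hamiltonian, deferring the bookkeeping to \cite{CZ}---is exactly the paper's strategy. The genuine gap is at the point you dismiss as ``cosmetic''. The difference between this setting and \cite{CZ} is \emph{not} the number of line bundles (Proposition \ref{pro:core} is irrelevant here: the statement concerns a single $A_4$-type root theory); it is that in \cite{CZ} every correlator is capped with Witten's class, which vanishes whenever an index equals $4$, so \cite{CZ} never has to treat the Ramond sector, whereas here the twist multiplies the \emph{fundamental} class and that sector is alive. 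This breaks your treatment of the boundary term: besides the Neveu--Schwarz nodes with branch indices $(\Theta,1-\Theta)$, to which the Bernoulli symmetry applies, there is the Ramond node $\Theta=\Theta'=0$, i.e.\ the channel $(\iii,\iii')=(4,4)$, for which $g^{\iii,\iii'}\neq 0$. There the factorization map $\mu_I$ must be corrected by the twist $\mathcal E\mapsto\mathcal E\otimes\Ocal(-D)$ at the points lifting the node, and $\ch_d(R\pi_*\Tcal)$ fails to be additive under this factorization: combining the normalization sequence (which contributes $\ch(-n^*\Tcal)=-\pmb 1$) with the two twist terms, one gets $\mu_{I*}j_I^*(\ch_0)=(\ch_0\times 1)+(1\times\ch_0)+(1\times 1)$, so the $(4,4)$ boundary channel carries an extra factor $\exp(s_0)$ after exponentiation. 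This factor is matched by the quadratic Hamiltonian of $\Delta$ only because the twisted pairing was \emph{defined} by $(\phi_4,\phi_4)_{\tw}=\exp(-s_0)$, i.e.\ $g^{4,4}=\exp(s_0)$. Your argument never invokes the $s_0$-dependence of the pairing on $H_{\tw}$, which is the telltale sign that this channel has not been checked; without it the ``gluing lemma'' matching fails precisely in the one sector that is new relative to \cite{CZ}.

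There is also a concrete error in your kappa step: identifying the universal curve over $\wt A_4\left(\tfrac{\iii_1+1}{5},\dots,\tfrac{\iii_n+1}{5}\right)_{0,n}$ with a moduli space with one extra marking, the extra marking carries the index $m_{n+1}=1$, i.e.\ the insertion $\phi_0$, not $\phi_4$: your own selection rule $2+\sum_i\iii_i+\iii_{n+1}\in 5\ZZ$ forces $\iii_{n+1}=0$. The kappa contribution then produces the constant vector field $\frac{B_{d+1}(1/5)}{(d+1)!}\,\partial/\partial t^0_{d+1}$, which is indeed the dilaton-shift correction of the linear term of the quantized operator---so ``absorbed by the dilaton shift'' is the right idea attached to the wrong insertion. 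In summary, as written your proof establishes the proposition only when all insertions and node channels are restricted to $\iii\in\{0,\dots,3\}$, which is the case already covered by \cite{CZ}; the extended state space, the $s_0$-dependent pairing, and the $(4,4)$ node computation are the actual content of the paper's proof and must be supplied.
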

\begin{proof}
The claim is proven in \cite{CZ}, but in a slightly different setting:
in \cite{CZ} both the twisted and the untwisted cone correspond to
intersection numbers over Witten's top Chern class
$c^{\vir}_\W\left({\pmb{\iii}}\right)$.
With a diagram, we can summarize the main theorem of \cite{CZ} as
\begin{align*}
&&\prod_i\psi_i^{a_i}\cap \textstyle{c^{\vir}_\W\left({\pmb{\iii}}\right)}
&& \overset{\un/\tw}\longrightarrow
&&\prod_i\psi_i^{a_i} \exp\left(\textsum_d s_d \ch_d(R\pi_*\Tcal)\right)\cap
\textstyle{c^{\vir}_\W\left({\pmb{\iii}}\right)},
\end{align*}
where the numbers appearing on the left hand side are those encoded in
the untwisted Lagrangian cone  of \cite{CZ}, whereas the number on the right hand side
are those encoded in the twisted Lagrangian cone of \cite{CZ}.
In genus zero, the cycle $c_\W\left({\pmb{\iii}}\right)$ is simply
the Poincar\'e dual of the top Chern class $\ctop((R^1\pi_*\Tcal)^\vee)$
on the moduli stack $\wt A_4(\frac{h_1+1}5,\dots,\frac{h_n+1}5)_{0,n}$. As we illustrated
in Lemma \ref{lem:assemble} this class vanishes as soon as one of the entries of
$\pmb{\iii}$ equals $4$.

The present statement claims
that the operator $\Delta$ sets the
relation between following untwisted and twisted invariants
\begin{align*}
&& \prod_i\psi_i^{a_i}
&&\overset{\un/\tw}\longrightarrow
&&\prod_i\psi_i^{a_i} \exp\left(\textsum_d s_d \ch_d(R\pi_*\Tcal)\right).
\end{align*}
Exactly as in \cite{CZ} the quantization of the operator $\Delta$ yields
a differential operator relating the higher genus partition functions $\Dcal_{\un}$ and $\Dcal_{\tw}$:
$\widehat{\Delta}(\Dcal_{\un})=\Dcal_{\tw}$.
This implies the desired claim $\Delta(\Lcal_{\un})=\Lcal_{\tw}$; indeed,
after passing to the quasi-classical limit $\hbar\to 0$,
applying the operator $\Delta$ to $\Dcal_{\un}$ corresponds
to transforming the Lagrangian submanifold $\Lcal_{\un}$
by the (unquantized) operator $\Delta$.

In one respect the passage from these untwisted to these twisted invariants
is simpler compared to \cite{CZ}: it involves the factorization
properties of the fundamental cycle, which are much easier to
prove than those of Witten's top Chern class. We state the crucial
factorization property for the fundamental cycle. Fix a subset
$I\subseteq[n]:=\{1,\dots,n\}$ and $l$ between $0$ and $g$.
Consider the following substack $\Scal_{l,I}$ of the stack $\Scal$ introduced in Section \ref{sect:enum}:
the objects are triples of the form ($\wt A_4$-curve, node, a choice of a branch
of the node) where
the node divides the curve into two
subcurves and the curve corresponding to the chosen branch has
genus $l$ and  marking set $I$.
Let $q\in\{0,\dots, 4\}$ be
equivalent to $2l-2-\textsum_{i\in I} \iii_i \mod 5$.
Write $q'$ for the index in $\{0,\dots, 4\}$
satisfying $q+q'=3 \mod 5$; write $I'$ for $[n]\setminus I$; let $\pmb \iii_H$ be the
multiindex $(\iii_i\mid i\in H)$ for any set $H\subseteq [n]$.
There exist natural morphisms
\[
\wt A_4\left(\tfrac{\pmb \iii_I+\pmb 1}5,\tfrac{q+1}5\right)_{l,\#{I}+1}
\times \wt A_4\left(\tfrac{\pmb \iii_{I'}+\pmb 1}5,\tfrac{q'+1}5\right)_{g-l,\#{I'}+1}
\xleftarrow{\ \ \mu_{l,I}\ }
\Scal_{l,I}\xrightarrow{\ j_{l,I}\ }
\wt A_4\left(\tfrac{\pmb \iii+\pmb 1}5\right)_{g,n},
\]
where $j_{l,I}$ is simply the morphism
forgetting the node
and $\mu_I$ is obtained by normalizing the curve at the node and pulling back.
Here, we should notice that the case when $q=q'=4$ is special: after pulling
back we should apply to
the line bundles obtained on each component
the functor $\mathcal E \mapsto \mathcal E \otimes \Ocal(-D)$ where $D$ is the
integer divisor associated to the point
lifting the node; in this way we obtain an object of
the product
$$\wt A_4\left(\tfrac{\pmb \iii_I+\pmb 1}5,\tfrac{q+1}5\right)_{l,\#{I}+1}
\times \wt A_4\left(\tfrac{\pmb \iii_{I'}+\pmb 1}5,\tfrac{q'+1}5\right)_{g-l,\#{I'}+1}\qquad \text{ with $q=q'=4$}.$$
We can finally state the required factorization property for the
fundamental classes. We have
\begin{equation*}
\mu_{l,I *} j_{l,I}^*\left[\wt A_4( \tfrac{\pmb\iii+\pmb 1}5)\right]_{g,n}=
\mu_{l,I *} [\Scal_{l,I}]=
\left[\wt A_4\left(\tfrac{\pmb \iii_I+\pmb 1}5,\tfrac{q+1}5\right)\right]_{l,\#{I}+1,5}\times
\left [\wt A_4\left(\tfrac{\pmb\iii_{I'}+\pmb 1}5,\tfrac{q'+1}5\right)\right]_{g-l,\#{I'}+1,5}.
\end{equation*}
The crucial fact is that the degree of  $\mu_I$ equals one\footnote
{The stabilizers on the
generic points of both sides are isomorphic to
$\pmmu_5\times \pmmu_5$: on the right-hand side the
two generators act by rotating the fibres of
each $\wt A_4$-structure, whereas on
the left-hand side one generator rotates the fibres of
the $\wt A_4$-structure and the other operates on the curve by means
of an automorphism acting locally as $(x,y)\mapsto (\xi_5 x,y)$ and commonly named the ``ghost'' automorphism.
See discussion of ``$\mu_{l,I}$'' in \cite[p.10]{CZ} for a proof.}.
Similarly, and precisely as in \cite{CZ}, we can define the substack $\Scal_{\irr,q}$ of
$\Scal$ whose objects are triples ($\wt A_4$-curve, node, a choice of a branch
of the node) where
the node is nonseparating and the local index at the
chosen branch is $(q+1)/5$. We have
\begin{equation*}
\mu_{\irr,q *} j_{\irr,q}^*\left[\wt A_4( \tfrac{\pmb\iii+\pmb 1}5)\right]_{g,n}=
\mu_{\irr,q *} [\Scal_{\irr,q}]=
\left[\wt A_4\left(\tfrac{\pmb \iii_I+\pmb 1}5,\tfrac{q+1}5,\tfrac{q'+1}5\right)\right]_{g-1,n+2,5}.
\end{equation*}

The above description extends word for word the description of factorization
properties of Witten's top Chern class
given in \cite{CZ}.
There is one point, however, where the presence of Witten's top Chern class simplifies
things in \cite{CZ}:
since the cycle $\textstyle{c_\W\left({\pmb{\iii}}\right)}$
 vanishes as soon
as one of the entries of
$\pmb \iii$ equals $4$,
in \cite{CZ} we did not need to carry out the calculation
outside the range $[0,3]$. We need to
go through the argument focusing on
this situation. This will allow us to illustrate how the claim essentially
follows from
Proposition \ref{pro:GRR}.
Indeed, the quantization of $\oplus_{i=0}^4 {B_{l+1}\left(\frac{i+1}{5}\right)}
z^l/{(l+1)!}$ is the differential operator \begin{multline}\label{eq:diffop}
P_l=
\frac{B_{l+1}(\frac15)}{(l+1)!}
\frac{\partial}{\partial t_{l+1}^{0}}
-\sum_{\substack{a\ge 0\\ 0\le \iii\le 4}}\frac{B_{l+1}(\frac{\iii+1}5)}{(l+1)!}
t_{a}^{\iii}\frac{\partial }{\partial t_{a+l}^{\iii}}
+ \frac{\hbar}{2}\sum_{\substack{a+a'=l-1\\0\le \iii, \iii'\le 4}}(-1)^{a'}
g^{\iii, \iii'}
\frac{B_{l+1}(\frac{\iii+1}5)}{(l+1)!}\frac{\partial ^2}{\partial t_{a}^{\iii} \partial
t_{a'}^{\iii'}};
\end{multline}
notice that
the upper indices $\iii$ and $\iii'$ range in $[0,4]$
and we used the convention $(g^{\iii,\iii'})=(g_{\iii,\iii'})^{-1}$ for $(g_{\iii,\iii'})=(\ ,\ )_{\tw}$.
Therefore, we need to show that
$\partial \Dcal_{\tw} /\partial s_l$ equals
$P_l \Dcal_{\tw}$; which is Proposition \ref{pro:GRR} and can be proven as
in \cite{CZ} with a minor modification when  $g^{\iii,\iii'}=g^{4,4}$. There
the following equations \eqref{eq:sector4} and
\eqref{eq:sector4irr} are involved.

Notice first that for $l=0$ the third summand is not involved and
the above statement simply says that the insertion of $\ch_0(R\pi_*L)$
is equivalent to the multiplication by
$$\left(-\frac12+\frac15\right)(2g-2+n)-\sum_{i=1}^n\left(-\frac12+\frac{h_i+1}5\right)=
-g+1+\frac15(2g-2+n)-\sum_{i=1}^n \frac{h_i+1}{5}.$$ This is indeed the multiplicity
of $\ch_0$ by Riemann--Roch.

For $l>0$, the third summand is involved. Notice, however, that
the coefficient $g^{\iii,\iii'}$
appearing there is different from zero only if
$(\iii,\iii')$ equals $(0,3),(1,2),(2,1),(3,0),$ or $(4,4)$. Therefore,
the only new contribution with respect to \cite{CZ}
occurs for $(\iii,\iii')=(4, 4)$.
The desired formula follows as in \cite[Prop.~4.3.1, Step 3 and 4]{CZ}
using the following
geometric property.
The higher direct image in $K$-theory of a line bundle $L$
on a family of curves $C\to X$ having a node $n\colon X\to C$
differs from the higher direct image
of the pullback of $L$ on the family $C^\nu \to X$
obtained by normalization at the node $n$. The difference is precisely
the $K$-class $-n^*L$: in our situation $L$ is the $5$th root of $\omega$ and
$\ch(-n^*L)=-\pmb 1$. Summarizing, whenever
$I\subseteq [n]$ and $l\in \{0,\dots,g\}$
satisfy $2l-2-\sum_{i\in I} \iii_i\equiv 4 \mod 5$,
we have
$$
\mu_{l,I *}j_{l,I}^*(\ch_d)=\begin{cases}
(\ch_d\times 1)+(1\times \ch_d)&\text{if $d>0$}\\
(\ch_0\times 1)+(1\times \ch_0)+(1\times 1)&\text{if $d=0$},\end{cases}$$
(where we write $\ch_d$ for $\ch_d(R\pi_*\Tcal)$) and, therefore,
\begin{equation}\label{eq:sector4}
\mu_{l,I *}j_{l,I}^*(\exp(\textsum_d s_d\ch_d))=\exp(s_0)
\exp(\textsum_d s_d [\ch_d\times 1])\exp(\textsum_d s_d [1\times \ch_d]).
\end{equation}
The appearance of the factor $\exp(s_0)$ matches the new boundary contribution
of \eqref{eq:diffop}: indeed, $g^{4,4}$ equals $\exp(s_0)$. The analogous
relation holds for
the morphisms $\mu_{\irr ,4}$ and $j_{\irr,4}$:
\begin{equation}\label{eq:sector4irr}
\mu_{\irr,4 *}j_{\irr,4}^*(\exp(\textsum_d s_d\ch_d))=\exp(s_0)
\exp(\textsum_d s_d \ch_d).
\end{equation}
\end{proof}
%\begin{rem}
%Since the symplectic vector spaces $(\Vcal_{\tw},\Omega_{\tw})$
%and $(\Vcal_{\un},\Omega_{\un})$ are isomorphic, the symplectomorphism
%$\Delta$ in the statement above can be regarded as an automorphism of
%$\Vcal_{\un}$
%once $(\Vcal_{\tw},\Omega_{\tw})$ is identified to
%$(\Vcal_{\un},\Omega_{\un})$ via
%$f\mapsto  \exp(s_0/2)f$.
%% \cite[Rem.~4.2]{CCIT}.
%V\end{rem}

\paragraph{A family on the FJRW Lagrangian cone.}
In this section, we exhibit a
family of the form
$$I_{\RW}(t, z)\in f(t)z+H_{\RW}[[z^{-1}]] \qquad \text{with $f(t)\in H_{\RW}$}$$
such that $I_{\RW}(t,-z)$ belongs to the Lagrangian cone $\Lcal_{\RW}$.
We will use it to determine the entire Lagrangian cone $\Lcal_{\RW}$.
In the next statement the Pochhammer symbols
$[a]_n=a(a+1)\dots(a+n-1)$ have been used.
%Convention \ref{conv:mod5} ($\phi_{\iii}=\phi_{\iii+5k}$) has been used and $\{\ \}$
%denotes the fractional part.
\begin{thm}\label{thm:FJRW-I-funct}
Let
$$I_{\RW}(t,z)= z\sum_{k=1,2,3,4} \omega_k^{\RW}(t) \frac{1}{z^{k-1}}\, \phi^{4-k}\qquad \qquad \text{with}\qquad
\omega_k^{\RW}(t)=\frac{1}{\Gamma(k)}\sum_{l\ge 0} \frac{([\frac{k}{5}]_l)^5}{[k]_{5l}}{t^{k+5l}}.$$
The family $\CC\ni t \mapsto I_{\RW}(t,-z)$  lies on the Lagrangian cone $\Lcal_{\RW}$.
\end{thm}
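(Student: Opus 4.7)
The proof combines three ingredients established in the preceding section: Remark~\ref{rem:fixs}, which identifies $\Lcal_\RW$ with the nonequivariant limit of $\Lcal_\tw$ at $s_d=\ol s_d$; Proposition~\ref{pro:CZ}, which gives $\Lcal_\tw=\Delta(\Lcal_\un)$ via the explicit symplectic operator \eqref{eq:transf}; and the elementary fact that $\Lcal_\un$ is entirely determined by the multinomial formula \eqref{eq:untwistinv}. I would produce a one-parameter family $I_\un(t,z)\in\Lcal_\un$, push it through $\Delta$ specialized at $s_d=\ol s_d$, and identify the nonequivariant limit with $I_\RW(t,z)$ (the sign convention $z\to -z$ in the statement is immaterial for the argument).

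\emph{Step 1: the preimage on $\Lcal_\un$.} Combining \eqref{eq:untwistinv} with the selection rule \eqref{eq:selrule}, the untwisted $J$-function along the line $t\phi_1$ takes the closed form
\[
J_\un(t\phi_1,z)\;=\;z\sum_{n\ge 0}\frac{t^n}{n!\,z^n}\,\phi_n,
\]
with indices read modulo $5$ via Convention~\ref{conv:mod5} (using the extended element $\phi_4$ when $n\equiv 4\bmod 5$). Since $\Lcal_\un$ is a cone, the scaled family $I_\un(t,z):=t\,J_\un(t\phi_1,z)$ also lies on $\Lcal_\un$; it already matches $I_\RW(t,z)$ on the $\phi_0,\dots,\phi_3$-components through order $t^4$.

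\emph{Step 2: twist, nonequivariant limit, and the main obstacle.} Applying $\Delta$ componentwise and grouping terms by $n=i+5l$ (using $(i+5l)!=i!\,[i+1]_{5l}$),
\[
\Delta(I_\un)(t,z)\;=\;\sum_{i=0}^{4}\Delta_i(z)\,\phi_i\,\frac{t^{i+1}}{i!\,z^{i-1}}\sum_{l\ge 0}\frac{(t/z)^{5l}}{[i+1]_{5l}}.
\]
Setting $s_d=\ol s_d$, each factor $\Delta_i(z)$ becomes $\la^{-5B_1((i+1)/5)}\exp\!\bigl(5\sum_{d\ge 1}\tfrac{B_{d+1}((i+1)/5)}{d(d+1)}(z/\la)^d\bigr)$; the classical Bernoulli-polynomial expansion of $\ln\Gamma$ rewrites this as the fifth power of an explicit ratio of $\Gamma$-functions of argument $\la/z$. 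An infinite-product expansion of that ratio combines with the $[i+1]_{5l}^{-1}$-series so that, after Pochhammer telescoping, the coefficient of $t^{i+1+5l}\phi_i$ in the limit $\la\to 0$ becomes exactly $\tfrac{1}{\Gamma(i+1)}\cdot\tfrac{[(i+1)/5]_l^5}{[i+1]_{5l}\,z^{i-1}}$ for $i\in\{0,1,2,3\}$, matching $I_\RW$. The $\phi_4$-component, rewritten via $\phi^4=\la^{-5}\phi_4$ from \eqref{eq:phibase}, carries an overall positive power of $\la$ and vanishes in the limit, consistent with the Ramond-vanishing of Proposition~\ref{pro:assemble}. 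The main obstacle is this Gamma-function rewriting together with the Pochhammer telescoping and the careful tracking of the fractional prefactor $\la^{-5B_1((i+1)/5)}$ through the limit; the computation is the FJRW analogue of Givental's original derivation \cite{Gi} of the quintic $I$-function in Gromov--Witten theory, simplified here by the direct-sum decoupling of $\Delta$ in \eqref{eq:transf} into five independent $A_4$-type computations.
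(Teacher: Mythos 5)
Your overall reduction scheme (untwisted cone $\to$ twisted cone via Proposition~\ref{pro:CZ} $\to$ nonequivariant limit at $s_d=\ol s_d$ via Remark~\ref{rem:fixs}) is the paper's, and your Step~1 is fine: $J_\un(t\phi_1,z)=z\sum_n \frac{t^n}{n!\,z^n}\phi_n$ is correct, and scaling by $t$ keeps you on the cone. But Step~2 has a fatal structural gap. The operator $\Delta$ of \eqref{eq:transf} acts diagonally, multiplying the $\phi_i$-component by a scalar series $\Delta_i(z)$ that depends \emph{only} on $i$, i.e.\ only on the residue of the $t$-exponent modulo $5$. Hence in your own displayed formula the coefficient of $t^{i+1+5l}$ in $\Delta(I_\un)$ is $\Delta_i(z)\,z^{1-i-5l}\,\phi_i/\bigl(i!\,[i+1]_{5l}\bigr)$, with one and the same factor $\Delta_i(z)$ for every $l$. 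No Gamma-function rewriting or ``Pochhammer telescoping'' can turn this into the coefficient $[\tfrac{i+1}{5}]_l^5\,z^{1-i}\,\phi_i/\bigl(i!\,[i+1]_{5l}\bigr)$ of $I_{\RW}$, which carries an $l$-dependent factor and a single $l$-independent power of $z$: that would force $\Delta_i(z)=[\tfrac{i+1}{5}]_l^5\,z^{5l}$ simultaneously for all $l$, which is absurd. Moreover the limit you invoke does not exist: at the specialization \eqref{eq:fixs} one has $\Delta_i(z)=\la^{5/2-(i+1)}\exp\bigl(5\sum_{d\ge1}\tfrac{B_{d+1}((i+1)/5)}{d(d+1)}(z/\la)^d\bigr)$, whose $z^d$-coefficient scales like $\la^{5/2-(i+1)-d}$, so for $i\ge 2$ even the constant term blows up as $\la\to 0$.

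What is missing is precisely the step the paper devotes most of its proof to, Lemma~\ref{lem:alpha}: before applying $\Delta$ one must act on $J_\un$ by $\exp\bigl(-G_{1/5}(z\nabla,z)\bigr)$, where $\nabla=\sum_i\tfrac{i}{5}D_i$ is built from the dilation fields \eqref{eq:dilation} and therefore produces factors depending on the \emph{full} exponent of $t$, not just its class mod $5$. The genuinely hard content is that this degree-dependent operator preserves $\Lcal_\un$ (proved by the CCIT-style induction using $zT=\Lcal_\un\cap T$ and the string equation). Only the composite $\Delta\circ\exp\bigl(-G_{1/5}(z\nabla,z)\bigr)$ applied to $J_\un$ yields, via $G_0(x+z,z)=G_0(x,z)+\pmb s(x)$, the finite-product modification $M_{\pmb k}(z)$ of Lemma~\ref{thm:Itw}, which at $s_d=\ol s_d$ becomes $\prod_b(\la+\tfrac15 z+bz)^5$; its powers of $z$ cancel the $z^{-5l}$, the limit $\la\to 0$ exists, and the Pochhammer factors $[\tfrac k5]_l^5$ emerge. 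As written, your argument produces a legitimate family on $\Lcal_\tw$, but not one related to $I_{\RW}$, so the theorem does not follow.
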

\begin{proof}
We proceed as follows:
\begin{enumerate}
\item
Using the above explicit presentation \eqref{eq:Jlocus} of $J_{\un}$,
we get a function taking values in $\Lcal_{\un}$.
\item
We operate on $J_{\un}$ with a transformation which sends
it to a new function taking values in $\Lcal_{\un}$.
\item
We apply $\Delta$
to this function; we get a function $I_{\tw}(t, z)$ taking values on
$\Lcal_{\tw}$.
\item By setting $s_0,s_1,\dots$ as in Lemma \ref{lem:limit} and
by taking the limit ($\la\to 0$), we get $I_{\RW}$.
\end{enumerate}

Step 1. We recall the definition of the $J$-function for the untwisted cone
\begin{multline*}
J_{\un}(\textsum _{0\le \iii\le 4}t_0^\iii,-z)=
-z\phi_0+ \textsum _{0\le \iii\le 4}t_0^\iii \phi_\iii +\\
\sum_{\substack{0\le \iii_1,\dots, \iii_n\le 4\\{k\ge 0 }}}\frac{t_{0}^{\iii_1}\cdots t_{0}^{\iii_n}}{n!(-z)^{k+1}}
\langle \tau_{0}(\phi_{\iii_1}),\dots,\tau_{0}(\phi_{\iii_n}),\tau_k(\phi_{e(\pmb \iii)_{n+1}})
\rangle^{\un}_{0,n+1}\phi^{e(\pmb \iii)_{n+1}},
\end{multline*}
where the following notation has been used.
\begin{notn}  Any multiindex
with $n$ integer entries
$\pmb \iii\in [0, 4]^n$ can be completed
$$e({\pmb \iii})=\left(\iii_1,\dots,\iii_n,
5\left\{ \frac{-2-\abs{\pmb \iii}}{5}\right\}\right);$$
so that $e(\pmb \iii)$ is a  multiindex
satisfying $\abs{ e(\pmb \iii)}\equiv -2 \mod 5$ (the same selection rule
of \eqref{eq:selrule}).
\end{notn}

In the expression above,
two multiindices $\pmb \iii$
that coincide after a permutation yield the same contribution.
It makes sense to sum with multiplicities
over all nonnegative multiindices $(k_0,\dots,k_4)$
choosing a representant for all permutations of the multiindex
$$\iii({\pmb k})=(\underbrace{0,\dots, 0}_{k_0}, \underbrace{1,\dots, 1}_{k_1},
\dots, \underbrace{4, \dots,4}_{k_4}).$$
The multiplicities are provided by the multinomial coefficients $(k_0+\dots+k_4)!/(k_0!\dots k_4!)$.
Now we can rewrite $J_\un(t,z)$ as
\begin{align*}
J_\un(t,z)&=\sum_{\substack{k_0,\dots, k_4\ge 0}
}J^{\pmb k}_{\un} (t,z),\\
J^{\pmb k}_{\un}(t,z)&=
\sum_{l\ge 0}
\frac{(t^0_0)^{k_0}
\dots (t^4_0)^{k_4}}{z^{l+1}k_0!\dots k_4!}
\langle \underbrace{\tau_{0}(\phi_{0}),\dots,\tau_{0}(\phi_{0})}_{k_0},
\dots,
\underbrace{\tau_{0}(\phi_{4}),\dots,\tau_{0}(\phi_{4})}_{k_4},
\tau_k(\phi_{e({\pmb k})_{n+1}})
\rangle^{\un}_{0,n+1}\phi^{e({\pmb k})_{n+1}},
\end{align*}
where, by a slight abuse of notation, we wrote $e({\pmb k})$
rather than $e(\iii({\pmb k}))$.
Using \eqref{eq:untwistinv} and the fact that $\phi_h=\phi^{3-h}$ for all $h$
(we are in the untwisted state space $H_{\un}$) we get
\begin{equation}\label{eq:Jun}
J_{\un}(t,-z)=z\sum_{k_0,\dots, k_4\ge 0}
\frac{1}{z^{\abs{\pmb k}}}
\frac{(t^0_0)^{k_0}
\dots (t^4_0)^{k_4}}{k_0!\dots k_4!}
{\phi}_{{\sum ik_i}}.
\end{equation}
Indeed, we have $\int \psi^l=\delta_{l,\dim}/5$,
where $\dim$ is the dimension of the moduli stack $\wt A_4$, \emph{i.e.}
$\abs{\pmb{k}}-2$.

Step 2. We derive from $J_{\un}$ a class
of functions taking values in $\Lcal_{\un}$.
\begin{notn}[the functions $\pmb s(x)$ and $G_y(x,z)$]
In $\CC[y,x,z,z^{-1}][[s_0,s_1,\dots]]$, we define
\begin{eqnarray}
G_y(x,z)=\sum_{m,l\ge 0} s_{l+m-1}\frac{B_m(y)}{m!}\frac{x^l}{l!}z^{m-1},
\end{eqnarray}
with $s_{-1}=0$.
By the definition of
Bernoulli polynomials
one gets
\begin{eqnarray}
G_y(x,z)&=&G_0(x+yz,z),\label{eq:Gy}\\
G_0(x+z,z)&=&G_0(x,z)+\pmb s(x)\label{eq:Gs},
\end{eqnarray}
where $\pmb s(x)$ is given by
$$\pmb s(x)=\sum_{d\ge 0} s_d \frac{x^d}{d!}.$$
\end{notn}
\begin{notn}[the dilation vector fields $D_i$]
For $i=0,\dots,4$ we write
\begin{equation}\label{eq:dilation}
D_i=t^i_0 \frac{\partial}{\partial t^i_0},\end{equation}
for the vector field on the state space $H_{\un }=\{
t^0_0\phi_0+t^1_0\phi_{1}+t^2_0\phi_{{2}} +t^3_0\phi_{{3}}
+t^4_0\phi_{{4}}\}$
attached to
$t^i_0$.

The vector field $D_i$ naturally operates on $J_{\un}$.
It is easy to check (using \eqref{eq:Jun})
that
\begin{equation*}
D_i J^{\pmb k}_{\un}=k_iJ^{\pmb k}_{\un}.\end{equation*}
In the following lemma we operate on $J_\un$ by means of the
vector field $$\nabla=\sum_{i=0}^4 \frac{i}{5}D_{i}.$$
\end{notn}
\begin{lem}\label{lem:alpha}
The family $t\mapsto \exp(-G_{\frac{1}{5}}(z\nabla ,z))J_\un(t,-z)$
lies on $\Lcal_{\un}$.
\end{lem}\label{lem}
\begin{proof}
The proof of
\cite[p.11]{CCIT} extends word for word.
The only difference is that
here
$G_{\frac{1}{5}}(z\nabla,z)$
replaces $G_0(z\nabla,z)$; therefore, the coefficient
$B_m{\textstyle \left(\frac{1}{5}\right)}$ should replace $B_m(0)$ in the
definition of the operator
$P_i(z\nabla,z)=
\sum_{m=0}^{i+1} \frac{1}{m!(i+1-m)!} z^m B_m{\textstyle \left(0\right)}
(z\nabla)^{i+1-m}$ used there.

For clarity, we go through the generalization step by step.
Note that $\exp(-G_{\frac{1}{5}}(z\nabla ,z))J^\un(\tau,-z)$ depends on
the variables $\{s_0,s_1,\dots\}$; therefore we write
$$J_{\pmb s}(\tau,-z)=\exp(-G_{\frac{1}{5}}(z\nabla ,z))J^\un(\tau,-z).$$
Remark that we can write the point $J_{\pmb s}(\tau, -z)$ as
$$h=-z+\sum_{k\ge 0} t_k z^k + \sum _{j\ge 0}
\frac{p_j}{(-z)^{j+1}},$$
where $t_k$ is just a parameter varying in the state space $H_{\un}$,
which we can write in terms of parameters $t^k_i$ in the ground ring $R$ as
$t_k=\sum_{k\ge 0}{t^k_i}\phi_i$.
The claim of the lemma is $h\in \Lcal_\un.$ More explicitly
we want to show that the variables $p_k$ are expressed in terms of the
previous variables as follows
\begin{equation}
p_j=\sum_{n\ge 0} \ \sum_{\substack{
0 \le i_1,\dots, i_n\le 4 \\ {a_1, \dots, a_n\ge 0 }}}\ \sum _{0\le \epsilon \le 4} \
\frac{t_{a_1}^{\iii_1}\dots t^{\iii_n}_{a_n}}{n!} \left\langle\tau_{a_1}(\phi_{\iii_1}),\dots,
\tau_{a_n}(\phi_{\iii_n}),\tau_{j}(\phi_{e(\pmb \iii)_{n+1}}) \right\rangle^{\un}_{0,n+1}
\phi^{e(\pmb h)_{n+1}}.\end{equation}
The idea of \cite{CCIT} is to define
$$E_j(h)=p_j-\sum_{n\ge 0} \ \sum_{\substack{
0 \le i_1,\dots, i_n\le 4 \\ {a_1, \dots, a_n\ge 0 }}}\ \sum _{0\le \epsilon \le 4} \
\frac{t_{a_1}^{\iii_1}\dots t^{\iii_n}_{a_n}}{n!} \left\langle\tau_{a_1}(\phi_{\iii_1}),\dots,
\tau_{a_n}(\phi_{\iii_n}),\tau_{j}(\phi_{e(\pmb \iii)_{n+1}}) \right\rangle^{\un}_{0,n+1}
\phi^{e(\pmb h)_{n+1}}.$$
and to prove that for any $j$ the composition
$E_j\circ J_{\pmb s}$ vanishes.

The claim is evident when we set $s_0=s_1=\dots =0$, so we can proceed by induction:
we define the degree of the variable $s_k$ to be $k+1$ and we proceed by induction on the
degree.
Namely we show that
$$\text{$E_j(J_{\pmb s}(\tau, -z))$ vanishes up to degree $n$}\ \ \Longrightarrow\ \
\text{$\frac{\partial}{\partial s_i} E_j(J_{\pmb s}(\tau, -z))$ vanishes up to degree $n$.}$$
The condition on the left hand side above simply means
that we can adjust  $J_{\pmb s}(\tau,-z)$
by adding terms as follows
\begin{equation}\label{eq:Jtilde}
\wt J_{\pmb s}(\tau,-z)=J_{\pmb s}(\tau, -z)+ Y\end{equation}
so that the degree of $Y$ in the variables ${s_1,s_2,\dots}$ is greater than $n$
and $E_j(\wt J_{\pmb s}(\tau,-z))$ vanishes.
(More precisely let us project $J_{\pmb s}(\tau,-z)$ to $[J_{\pmb s}(\tau,-z)]_+\in \Vcal^+_{\un}$ and
decompose $J_{\pmb s}(\tau,-z)$ as $[J_{\pmb s}(\tau,-z)]_-+[J_{\pmb s}(\tau,-z)]_+$.
We remark that in order to match the  desired vanishing condition
we only need to modify the term
$[J_{\pmb s}(\tau,-z)]_-$.)

Let us focus on $\frac{\partial}{\partial s_i} E_j(J_{\pmb s}(\tau, -z))$. By the chain rule, we have
$$\frac{\partial}{\partial s_i} E_j(J_{\pmb s}(\tau, -z))=
d_{J_{\pmb s}(\tau,-z) }E_j \left(\frac{\partial}{\partial s_i} J_{\pmb s}(\tau,-z)\right).$$
By the definition of $G_{\frac15} (x,z))$, we have
\begin{equation}\label{eq:chain}
\frac{\partial}{\partial s_i} E_j(J_{\pmb s}(\tau, -z))=
d_{J_{\pmb s}(\tau,-z) }E_j \left(z^{-1} P_i J_{\pmb s}(\tau, -z)\right),
\end{equation}
where
$$P_i=\sum_{m=0}^{i+1} \frac{1}{m!(i+1-m)!} z^m B_m{\textstyle \left(\frac{1}{5}\right)}
(z\nabla)^{i+1-m}.$$
Note that the right hand side of \eqref{eq:chain}
coincides with
\begin{equation}\label{eq:diff}
d_{\wt J_{\pmb s}(\tau,-z) }E_j \left(z^{-1} P_i \wt J_{\pmb s}(\tau, -z)\right),
\end{equation}
up to degree $n$ in the variables $s_k$.

We need to show that
$\frac{\partial}{\partial s_i} E_j(\wt J_{\pmb s}(\tau, -z))$ vanishes. Recall
that the point $\wt  J_{\pmb s}(\tau, -z)$ lies on $\Lcal_\un$.
We want to show that $P_i \wt J_{\pmb s}(\tau, -z)$ is a point in $zT$,
where $T$ is
the tangent space of $\Lcal_\un$ at  $\wt J_{\pmb s}(\tau, -z)$.
Recall that
$T$ and $\Lcal_{\un}$ satisfy Givental's property
$$zT=\Lcal_\un \cap T.$$
Notice that $P_i$ is a linear combination $\la_0 z^{i+1}+\la_1 z^{i}(z\nabla)+
\dots+\la_{i+1} (z\nabla)^{i+1}$; so we can prove the claim term by term.
First, consider the (possibly iterated) multiplication by $z$.
By the string equation, we can regard  $\wt J_{\pmb s}(\tau, -z)\in\Lcal_\un$ as
 a point of $T$; hence, the multiplication by $z$
sends $T$ to itself ($zT\subset T$) and we have $z^{i+1}(T)\subset zT$.
Second, by the definition of $\nabla$, by operating by $z\nabla$
on  $\wt J_{\pmb s}(\tau, -z)$ we obtain a point of $zT$.
In fact, by $zT=\Lcal_\un \cap T$,
this point lies on $T$ (see \cite[Corollary B.7]{CCIT})
and on $\Lcal_{\un}$; therefore by applying again $z\nabla$
we still land in $zT$.
This allows us to
iterate this argument and finally show that $P_i
\wt J_{\pmb s}(\tau, -z)$ belongs to $zT$; therefore the term between brackets in
the right hand side of \eqref{eq:chain} belongs to $T$. We conclude that \eqref{eq:diff}, and so
$\frac{\partial}{\partial s_i} E_j(\wt J_{\pmb s}(\tau, -z))$ vanishes; hence
we have proven that
$\frac{\partial}{\partial s_i} E_j(J_{\pmb s}(\tau, -z))$ vanishes up to degree $n$.
\end{proof}

Step 3.
We apply to the function
$t\mapsto \exp(-G_{\frac{1}{5}}(z\nabla ,z))J_\un(t,-z)$ the
symplectic transformation
$\Delta\colon \Vcal_{\un}\to \Vcal_{\tw}$
from Proposition \ref{pro:CZ}.
Write $J_{\un}(t,-z)$ as the sum over the multiindices $\pmb k$ of the terms
$J_{\un}^{\pmb k}(t,-z)$. Then we can write
the family of Lemma \ref{lem:alpha}
as
$$t\mapsto \exp(-G_{\frac{1}{5}}(z\nabla ,z))J_\un(t,-z)=
\sum_{\pmb k} \exp\left(-G_{\frac{1}{5}}\left(\frac{\sum_{i=0}^4 ik_{i}}{5}z,z\right)\right)J^{\pmb k}_{\un}(t,-z).$$
By \eqref{eq:Gy}, the above expression equals
\begin{equation}\label{eq:beforeDelta}
\sum_{\pmb k} \exp\left(-G_{0}\left(\frac{1+\sum_{i=0}^4 ik_{i}}{5}z,z\right)\right)J^{\pmb k}_{\un}(t,-z).
\end{equation}By applying
the operator from Proposition \ref{pro:CZ}
$$\Delta=\bigoplus_{i=0}^{4}
\exp \left(\sum_{d\ge 0} s_d \frac{B_{d+1}\left(\frac{i+1}{5}\right)}
{(d+1)!}z^d
\right)\overset{\eqref{eq:Gy}}{=}\bigoplus_{i=0}^{4}
\exp \left( G_0\left(\frac{i+1}{5}z,z\right)
\right)$$ to \eqref{eq:beforeDelta} we get
$$\sum_{\pmb k} \exp\left(G_0\left(\left(\frac{1}{5}+\left \{\frac{\sum  ik_{i}}{5}\right\}\right)
z,z\right)
-G_{0}\left(\frac{1+\sum ik_{i}}{5}z,z\right)\right)J^{\pmb k}_{\un}(t,-z).$$
The relation $\pmb s(x)=G_0(x+z,z)-G_0(x,z)$ yields
$$\sum_{\pmb k}
\exp\left(-\sum_{0\le  m<\lfloor\sum ik_i/5\rfloor}\pmb s\left(\frac{1}{5}z+
\left\{ \frac{\textsum ik_i}{5} \right\} z+ mz\right)\right)J^{\pmb k}_{\un}(t,-z).
$$
In this way, Step 2 and Proposition \ref{pro:CZ} imply the following lemma.
\begin{lem}\label{thm:Itw}
Let $M_{\pmb k}(z)$ be the \emph{modification function}
$$M_{\pmb k}(z)=\exp\left(-\sum_{0\le  m<\lfloor\sum ik_i/5\rfloor}\pmb s\left(-\frac{1}{5}z-
\left\{ \frac{\textsum ik_i}{5} \right\} z- mz\right)\right).$$
Define $I_\tw (t,z)$ as the sum
$$I_\tw (t,z)=\sum_{\pmb k} M_{\pmb k}(z)J_\un ^{\pmb k}(t,z).$$
Then, the family $t\mapsto I_\tw(t,-z)$ lies on $\Lcal_\tw$.\qed
\end{lem}

Step 4. As we pointed out in Remark \ref{rem:fixs}
 the Lagrangian cone $\Lcal_{\RW}$ can
be realized from $\Lcal_\tw$.
This can be done by setting the parameters $s_0,s_1,\dots$ as in
\eqref{eq:fixs}
and by taking the nonequivariant limit.

First, observe that $M_{\pmb k}(z)$ assumes a simple form under the conditions
\eqref{eq:fixs}.
We have \begin{align*}
M_{\pmb k}(z)&=\exp\left(-\sum_{0\le  m<\lfloor\sum ik_i/5\rfloor}\pmb s\left(-\frac{1}{5}z-
\left\{ \frac{\textsum ik_i}{5} \right\} z- mz\right)\right)
\\&=\prod_{0\le  m<\lfloor\sum ik_i/5\rfloor} \exp\left(-s_0-
\sum_{d>0} s_d \frac{\left(-\frac{1}{5}z-
\big\{ \frac{\sum ik_i}{5} \big\} z- mz\right)^d}{d!}\right)\\
&=\prod_{0\le  m<\lfloor\sum ik_i/5\rfloor}\exp\left(5\ln(\lambda)+
\sum_{d> 0} 5(-1)^{d-1} \frac{1}{d}\left(\frac{\frac{1}{5}z+
\big\{ \frac{\sum ik_i}{5} \big\} z+ mz }{\la}\right)^d\right)
\\
&=\prod_{0\le  m<\lfloor\sum ik_i/5\rfloor}\left(\la \exp\left(\ln\left (1+\frac{\frac{1}{5}z+
\big\{ \frac{\sum ik_i}{5} \big\} z+ mz }{\la }\right)\right)\right)^5
\\
&=\prod_{0\le  m<\lfloor\sum ik_i/5\rfloor}\left(\la+\frac{1}{5}z+
\Big\{ \frac{\sum ik_i}{5} \Big\} z+ mz \right)^5\\
&=\prod_{\substack{0\le b<\sum ik_i/5\\ \{ b\}=\{ \sum ik_i/5  \}}}\left(\la+\frac{1}{5}z+ b z \right)^5.
\end{align*}
By using the formula for $J_{\un} ^{\pmb k} (t,z)$
we deduce that
$$\sum_{k_0,\dots, k_{4}\ge 0} \
\prod_{\substack{0\le b<\sum ik_i/5\\
\{ b\}=\{ \sum ik_i/5  \}}}
\left(\la -\frac{1}{5}z- b z \right)^5
\frac{1}{(-z)^{\abs{\pmb k}-1}}
\frac{(t^0_0)^{k_0}
\dots (t^4_0)^{k_4}}{k_0!\dots k_4!}
{\phi}_{\sum ik_i}$$
lies on the cone $\Lcal_{\tw}$ when the parameters $s_d$ are
fixed as above.
In particular, if $t$ ranges over $\{t^i_0=0\mid i\ne 1\}=\{(0,t,0,0,0)\}\subset H_{\tw}$,
we have
$$I_{\tw}(t,-z)=
\sum_{k\ge 0} \
\prod_{\substack{0\le b<k/5\\ \{ b\}=\langle k/5  \rangle}}
\left(\la -\frac{1}{5}z- b z \right)^5
\frac{1}{(-z)^{k-1}}
\frac{t^{k}}{k!}
{\phi}_{k}.$$
Recall that $\phi_0=\phi^3$, $\phi_1=\phi^2$, $\phi_2=\phi^1$, $\phi_3=\phi^0$,
and $\phi_4=\exp(-s_0)\phi^4=\la^5\phi^4$; therefore, in order to
compute the twisted invariants we write $I_{\tw}$ in the form
$$I_{\tw}(t,-z)= \omega_1\phi^3+
\omega_2\phi^2+\omega_3\phi^1+\omega_4\phi^0+\la^5 \omega_5\phi^4,$$
and the fifth term vanishes for $\la \to 0$.
We have
$$\prod_{\substack{0\le b<k/5\\ \{ b\}=\langle k/5  \rangle}}
\left(-\frac{1}{5}z- b z \right)^5= {\displaystyle \frac{\Gamma \left(\frac{k+1}{5}\right)}
{\Gamma(\{ \frac{k+1}{5}\})}(-z)^{5\lfloor k/5\rfloor }}.
$$
In this way
\begin{equation}\label{eq:beforet}
-z\sum_{0\le k \not \equiv 4}
\left(\frac{\Gamma \left(\frac{k+1}{5}\right)}
{\Gamma(\{ \frac{k+1}{5}\})}\right)^5
\frac{1}{(-z)^{k-5\lfloor k/5\rfloor}}
\frac{t^{k}}{k!}
{\phi}^{3-k}\end{equation}
lies on $\Lcal_{\RW}$ for any value of $t$.
Note that, since $\Lcal_{\RW}$ is a cone,
if we modify the above expression by multiplying
by $t$, then we still get a family of elements of $\Lcal_{\RW}$.
In this way, shifting by one the variable $k$ and expressing
the $\Gamma$-functions in terms
of the Pochhammer symbols by $[a]_n=\Gamma(a+n)/\Gamma(a)$,
we get the $I$-function of the statement. \end{proof}

\subsection{Relating the $I$-functions of FJRW theory and GW theory}

\begin{rem}[Proof of Theorem \ref{thm:PF}]
\label{rem:deducing_main}
The functions $\omega_k^{\RW}$ in the statement
of the theorem form a
well known basis of the solution space
of the Picard--Fuchs equation in the variable $t$:
$[{D_t}^4-5^5t^{-5}\prod_{m=1}^4(D_t-mz)]f=0$.
As mentioned in the introduction
the entire calculation of $I_{\RW}$ is parallel to
Givental's calculation \cite{Gi}
of $I_{\GW}$ for the quintic three-fold $X_{W}$.
There, we get
\begin{equation}\label{eq:IforGW}
I_{\GW}(q,z)=
 \sum_{d\ge 0}zq^{H/z+d}
\frac{\prod_{k=1}^{5d}(5H+kz) }{\prod_{k=1}^d (H+kz)^5},
\end{equation}
which (by $H^4=0$)
is a solution of the same equation in the variable $q$:
$[{D_q}^4-5q\prod_{m=1}^4(5D_q+mz)]f=0$.
By expanding $I_{\GW}$ in the variable $H$,
we obtain the
four period integrals spanning the solution space
\begin{equation}\label{eq:IforGWexpanded}
I_{\GW}=\omega_1^{\GW}z+\omega_2^{\GW} {H}+\omega_3^{\GW} \frac{1}{z}H^2+
\omega_4^{\GW}\frac{1}{z^2}H^3.
\end{equation}
This implies Theorem \ref{thm:PF} stated in the introduction.
\end{rem}

\begin{rem}\label{rem:HKQ}
The $I$-functions $I_{\GW}$ and
$I_{\RW}$ may be regarded as multivalued
functions\footnote{The multivaluedness comes from the factor $q^{H/z}$, which should be
read as $\exp(H\log(q)/z)$.}
in the variables
$q$ and $t^5$ (from this point of view, the parameter $H$ should be
considered as a complex variable).
In the literature,
the function $I_{\GW}$ is regarded as a solution of
the Picard--Fuchs equation in the sense that
it satisfies the equation up to
order $4$ in the variable $H$ and the first four terms of the Taylor
expansion yield a basis of the solution space of the
differential equation. The picture at a neighbourhood of $t=0$ is simpler: the elements
of the basis
$\omega_k^{\RW},\ k=1,\dots,4$ admit
the explicit expression given in Theorem \ref{thm:FJRW-I-funct}
and assemble into a regular function in the variable $t$.
In this respect, Theorem \ref{thm:PF} is the
interpretation in terms of enumerative geometry of curves
of a well known analytic picture.

String theory provides another
geometric interpretation of the same analytic picture.
As mentioned in the introduction
there is a physical interpretation of  the
Landau--Ginzburg/Calabi--Yau correspondence in terms of period integrals.
In the framework of mirror symmetry,
this is usually referred to as the B-model picture and
it reflects information on the A-model which is only
partly understood and
incorporates, in particular, the $\GW$ theory of
the quintic three-fold.
From this B-model interpretation, in \cite{HKQ}, Huang, Klemm,
and Quackenbush built upon
physical grounds new predictions in higher genera. Near $q=0$, the
B-model higher genus potential is determined up to $g=51$ and
is expected to match the (A-model)
higher genus $\GW$ invariants of the quintic three-fold.
Near $t=0$, the B-model higher genus potential is explicitly
determined in low genus (see \cite[\S3.4]{HKQ});
$\RW$ theory supplies the A-model geometrical interpretation of these invariants
usually referred to as the ``Gepner point potential''
in the physical literature (in \cite{HKQ} the Gepner point
is called ``orbifold point'' $F_{\rm orb}$ due to the
fact that the point $t=0$ is actually an
orbifold point with stabilizer of order $5$).
The functions $\omega_{\RW}$ of Theorem \ref{thm:FJRW-I-funct} match
\cite[3.57]{HKQ}
up to a constant factor
(due to the
fact that the period integrals have been
rescaled in \cite[(3.64-66)]{HKQ}\footnote{We are grateful to
Klemm for explaining this to us.}).
As we now illustrate,
the genus-zero formulae of \cite[\S3.4]{HKQ} are
matched by our $I_{\RW}$ formula. It would be an extremely interesting problem to compute the higher
genus $\RW$ theory to match \cite{HKQ} Gepner point
potential.
\end{rem}

\begin{rem}[intersection numbers]\label{rem:numbers}
We derive intersection numbers in Givental's formalism.
The function
$I_{\RW}(t,z)$ of the statement of the theorem above can be modified
by multiplication by any function taking values in the ground ring.
We already used this property in the last step of the proof to
get $I_{\RW}$ from \eqref{eq:beforet}.
We can expand \eqref{eq:beforet} in the variable $z$ and
get the four function $\omega_k^{\RW}$:
\begin{equation}\label{eq:FJRWperiods}
\omega_1^{\RW}(t)\phi^3z
+\omega_2^{\RW}(t)\phi^2
+\omega_{3}^{\RW}(t)\phi^1 z^{-1}
+\omega_{4}^{\RW}(t)\phi^0z^{-2}
\end{equation}
with $\omega_1^{\RW}$ invertible; therefore, we can multiply
\eqref{eq:beforet} by $1/\omega_1^{\RW}$ and get a function
lying on the cone $\Lcal_{\RW}$.
In fact, by $\phi_k=\phi^{3-k}$, we have\begin{equation}
J_{\RW}\left( \frac{\omega_{2}^{\RW}}{\omega_1^{\RW}}(t) \phi_1,-z\right)=
-z\phi_0+\frac{\omega_2^{\RW}}{\omega_1^{\RW}}(t) \phi_1
- \frac{\omega_{3}^{\RW}}{\omega_1^{\RW}}(t) \phi_2 z^{-1}
+ \frac{\omega_{4}^{\RW}}{\omega_1^{\RW}}(t) \phi_3 z^{-2};
\end{equation}
indeed
the $J$-function $J_{\RW}(\sum_\iii\tau_\iii\phi_\iii,-z)$ is characterized by being on the cone
$\Lcal_{\RW}$ and
admitting an expression of the form $-\phi_0+\sum_\iii\tau_\iii\phi_\iii+O(-1)$.
If we write $\tau$ for
$(\omega_2^{\RW}/\omega_1^{\RW})(t)$ we can regard the above expression
as the value of $J_{\RW}(\tau\phi_1,-z)$.

We can invert the relation $\tau=(\omega_2^{\RW}/\omega_1^{\RW})(t)$
explicitly in low degree
$$t=\tau -\frac{13}{1125000}\tau ^6-\frac{31991}{97453125000000}
\tau^{11}-\frac{294146129}{9976763671875000000000}\tau^{16}+O(\tau^{21}),$$
plug it into $(\omega_2^{\RW}/\omega_1^{\RW})(t)$, and get the coefficient of $z^{-1}$
in $J_{\RW}(\tau\phi_1,z)$ in low degree in the variable $\tau$:
$${\frac {1}{2}}{\tau}^{2}+{\frac {1}{393750}}{\tau}^{7}+{\frac {239}{
1559250000000}}{\tau}^{12}+{\frac {6904357}{452279953125000000000}}{\tau}^{
17}+O \left( {\tau}^{22} \right).$$
By the definition \eqref{eq:Jlocus} of $J_{\RW}$,
the above power series coincides with the power series
$$\sum_{h\ge 0} \frac{\langle \overbrace{\tau_{0}(\phi_1)\dots \tau_{0}(\phi_1)}^{5h+3}
\rangle^{\RW}_{0,5h+3}}{5h+2!}\tau^{5h+2};$$
therefore,  we get
$$
\begin{tabular}{c||c|c|c|c|c|}
   $n$&$ 3$ &$ 8$ &$ 13$ & $18 $ %& $23$ & $28$
   \\
  \hline&&&&\\
  $\langle {\tau_{0}(\phi_1)\dots \tau_{0}(\phi_1)}
\rangle^{\RW}_{0,n}$ &
{\Large $\frac15$} & {\Large{$\frac {8}{3125}$}} & {\Large${\frac {5736}
{390625}}$} & {\Large${\frac {1325636544}{1220703125}}$}
%&{\Large$\frac{327623667463872}{762939453125} $}&${\Large \frac{56591217222642902016}{95367431640625}$}
%  \hline
\end{tabular}\ .
$$
After the identification $s=t/5$ and multiplication by $5$ (due to
the rescaling of the period integrals in \cite{HKQ}),
the generating function
$$\frac15 \frac{t^3}{3!}+ \frac {8}{3125}\frac{t^8}{8!} +
\frac {5736}{390625} \frac{t^{13}}{13!} +
{\frac {1325636544}{1220703125}}\frac{t^{18}}{18!}+\dots$$
matches the genus-zero potential $F_{\rm orb}^0$ near $t=0$
 computed at page 21 of \cite{HKQ}.

A similar calculation yields the coefficient of
$z^{-2}$
in $J_{\RW}(\tau\phi_1,z)$ in low degree:
$$
{\frac {1}{6}}{\tau}^{3}+{\frac {1}{525000}}{\tau}^{8}+{\frac {239}{
1842750000000}}{\tau}^{13}+{\frac {6904357}{508814947265625000000}}{\tau}^{
18}+O \left( {\tau}^{23} \right)
$$
which coincides by \eqref{eq:Jlocus} with the generating function of
the invariants
$\langle {\tau_{0}(\phi_1)\dots \tau_{0}(\phi_1)}
\tau_1(\phi_0)\rangle^{\RW}_{0,5h+4}$.
In this way we get
$$
\begin{tabular}{c||c|c|c|c|c|}
   $n$&$ 4$ &$ 9$ &$ 14$ & $19 $  \\
  \hline&&&&\\
  $\langle {\tau_{0}(\phi_1)\dots \tau_{0}(\phi_1)\tau_{1}(\phi_0)}
\rangle^{\RW}_{0,n}$
&
{\Large$\frac15$}
&{\Large ${\frac {48}{3125}}$}
&{\Large${\frac {63096}{390625}}$}
&{\Large${\frac {21210184704}{1220703125}}$}
\end{tabular}\ \ ,
$$
but these values can be easily deduced from the
previous table via the dilaton equation\footnote{The value corresponding to $n$ in
the first table coincides with
that corresponding to $n+1$ in
the second table after multiplication by $2g-2+n=n-2$ (dilaton equation).}.

As expected (see Remark \ref{rem:CYsimpl})
we only find nonvanishing numbers $\langle \quad \rangle ^{\RW}$ with
insertions of type
$\tau_{0}(\phi_1)$ and $\tau_{1}(\phi_0)$ (the reader may
also refer to
equation \eqref{eq:localindicesforWG} which describes how
for $5$-pointed curves with $4$ points with local index $J^2$
we necessarily have a Ramond markings on the fifth marking---hence, a vanishing
invariant).

Carel Faber's computer programme for computation with
tautological classes together with the formula
of Proposition \ref{pro:GRR} also allowed us to compute explicitly
the first coefficients of the above lists.
Clearly, this algorithm is definitely less efficient than the
one illustrated above. Indeed, in this respect,
Givental's quantization may be regarded
as a setup embodying the GRR algorithm used by Faber's computer programme.
\end{rem}

\begin{cor}\label{cor:sympl}
There is a $\CC[z,z^{-1}]$-valued degree-preserving symplectic transformation $\UU$
 mapping $I_{\RW}$ to the analytic continuation of $I_{\GW}$
    near $t=0$. This proves the genus-zero LG/CY correspondence
    (Conjecture \ref{conj},(1)).
    \end{cor}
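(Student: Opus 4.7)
The plan is to reduce the statement to an explicit computation of the analytic continuation of $I_{\GW}$ from a neighborhood of $q=0$ to a neighborhood of $t=0$ (identifying $q=t^{-5}$), and to promote the resulting $\CC[z,z^{-1}]$-linear map between the values of the two $I$-functions to the desired symplectic isomorphism $\UU\colon\Vcal_{\RW}\to\Vcal_{\GW}$. Note that by Theorem \ref{thm:PF} and Remark \ref{rem:deducing_main}, after the substitution $q=t^{-5}$ the two functions $I_{\RW}(t,z)$ and $I_{\GW}(q,z)$ satisfy the \emph{same} fourth-order Picard--Fuchs equation in a single variable. Hence each component with respect to a chosen basis of $H_{\RW}\cong H_{\GW}$ (e.g. the $z$-expansion coefficients $\omega_k^{\RW}$ and $\omega_k^{\GW}$ of \eqref{eq:FJRWperiods} and \eqref{eq:IforGWexpanded}) gives a basis of the same four-dimensional solution space. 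The transition matrix from $(\omega_k^{\GW})_k$ analytically continued to $t\sim 0$ to $(\omega_k^{\RW})_k$ is therefore an element of $GL_4(\CC)$, and by padding it with powers of $z$ matching the $z$-exponents $\{z,\,1,\,z^{-1},\,z^{-2}\}$ appearing in the two $I$-function expansions we obtain a $\CC[z,z^{-1}]$-valued linear map $\UU$.

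The first step is to compute the analytic continuation of $I_{\GW}$ to a neighborhood of $t=0$ using the Mellin--Barnes method: express each period $\omega_k^{\GW}$ of \eqref{eq:IforGWexpanded} as a contour integral whose integrand is a ratio of $\Gamma$-functions (the ratio appearing in $I_{\GW}$ being already of hypergeometric type), deform the contour of integration from enclosing the poles at $q=0$ to enclosing the poles at $q=\infty$, and read off the coefficients of the resulting expansion in the basis $(\omega_k^{\RW})_k$. This yields an explicit matrix with entries in $\CC$ involving $\Gamma(k/5)$-values and roots of unity, and the matching $z$-scaling then produces an explicit $\CC[z,z^{-1}]$-valued map $\UU$ sending $I_{\RW}(t,z)$ to the analytic continuation of $I_{\GW}(t^{-5},z)$.

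The second step is to verify that $\UU$ is degree-preserving and symplectic. Degree preservation is built into the construction because both $I_{\RW}$ and $I_{\GW}$ are homogeneous of degree $2\widehat c_W=6$ under the grading of \eqref{eq:cc}, and the identification of $H_{\RW}$ and $H_{\GW}$ via \eqref{eq:statespaceiso} is degree-preserving (cf.\ Example \ref{exa:isosspace}). For the symplectic property, the two Poincaré pairings $(\ ,\ )_{\RW}$ and $(\ ,\ )_{\GW}$ are (up to a constant) matched by the isomorphism $H_{\RW}\cong H_{\GW}$, and the matrix obtained from Mellin--Barnes sends the self-dual basis on one side to the self-dual basis on the other; the compatibility of this identification with the pairings is a direct computation with the explicit $\Gamma$-factors.

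Finally, to conclude that $\UU(\Lcal_{\RW})$ equals the analytic continuation of $\Lcal_{\GW}$, one uses that each Lagrangian cone is reconstructed from its $I$-function: by multiplying $I_{\W}$ by a suitable invertible scalar function $f_{\W}(t)^{-1}$ and reparametrizing via the mirror map $t\mapsto t'$, one obtains the family $J_{\W}(t',-z)$, which by the overruling property $zT=\Lcal_{\W}\cap T$ (Givental's TRR/SE/DE) determines the cone entirely. Applying this reconstruction on both sides and invoking the compatibility of $\UU$ with $I_{\RW}\mapsto I_{\GW}$, the full cones are matched. The main obstacle is the first step, the actual Mellin--Barnes analytic continuation: one must carefully control the contour deformation, verify convergence of the resulting series (which is automatic from Theorem \ref{thm:FJRW-I-funct} once the formal identity is established), and correctly track the $z$-dependence so that the output lives in $\CC[z,z^{-1}]\otimes\Hom(H_{\RW},H_{\GW})$ rather than in a larger ring of formal series in $z$; all subsequent assertions then reduce to packaging this computation with the general reconstruction-from-$I$-function formalism.
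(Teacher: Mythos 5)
Your proposal is correct and follows essentially the same route as the paper's proof: Mellin--Barnes analytic continuation of $I_{\GW}$ past $|q|=5^{-5}$, expansion of the resulting residue sum in the basis $\omega_k^{\RW}$ to extract an explicit $\CC[z,z^{-1}]$-valued transition matrix $\UU$, verification that $\UU$ is degree-preserving and symplectic by direct computation, and reconstruction of each cone from its $I$-function (via the mirror map and the string equation) to upgrade $I_{\RW}\mapsto I_{\GW}'$ to $\UU(\Lcal_{\RW})=\Lcal_{\GW}'$. The only discrepancies are cosmetic (e.g.\ the stated homogeneity degree of the $I$-functions), not structural.
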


\begin{proof} The genus-zero LG/CY correspondence states that the \emph{entire} Lagrangian cones
match after analytic continuation. Notice that both
Lagrangian cones $\Lcal_{\RW}$ and $\Lcal_{\GW}$
are uniquely determined by $I_{\RW}$ and $I_{\GW}$. This happens because
they both are determined by means of tautological relations
starting from the $J$-functions \eqref{eq:Jlocus} which are uniquely deduced from
$I_{\RW}$ and $I_{\GW}$
(this reconstruction property
holds only for the quintic polynomial
and is a consequence of \eqref{eq:dimensioncompatibility}).

We only need to analytically continue the function $I_{\GW}$ near $t=0$ and
derive the change of basis matrix.
By the formula $z^{-l}\prod_{k=1}^l (x+kz) = \Gamma(1+\frac xz +l)/ \Gamma(1+\frac xz)$,
we rewrite $I$ as
\begin{equation}\label{eq:GW_Ifunct}
I_{\GW}(q,z)=
zq^{H/z} \sum_{d\ge 0}q^{d}
\frac{\Gamma(1+5\frac{H}{z}+5d)}{\Gamma(1+5\frac{H}{z})}\frac{\Gamma(1+\frac{H}{z})^5}
{\Gamma(1+\frac{H}{z}+d)^5};
\end{equation}
hence, we have
\begin{align*}\label{eq:sumres}
I_{\GW}(q,z)&=
zq^{H/z} \frac{\Gamma(1+\frac{H}{z})^5}{\Gamma(1+5\frac{H}{z})}\sum_{d\ge 0}q^{d}
\frac{\Gamma(1+5\frac{H}{z}+5d)}
{\Gamma(1+\frac{H}{z}+d)^5},\\
&=zq^{H/z}
\frac{\Gamma(1+\frac{H}{z})^5}{\Gamma(1+5\frac{H}{z})}\sum_{d\ge
0} \frac1{2\pi\cxi}\Res_{s=d}\frac{1}{e^{2\pi \cxi s}-1}
\frac{\Gamma(1+5\frac{H}{z}+5s)} {\Gamma(1+\frac{H}{z}+s)^5}q^{s}.
\end{align*}
Consider the contour integral
\begin{equation}
zq^{H/z}
\frac{\Gamma(1+\frac{H}{z})^5}{\Gamma(1+5\frac{H}{z})}\int_C
\frac{1}{e^{2\pi \cxi s}-1} \frac{\Gamma(1+5\frac{H}{z}+5s)}
{\Gamma(1+\frac{H}{z}+s)^5}q^{s},\label{eq:contourint}
\end{equation}
where the $C$ is curve shown below.
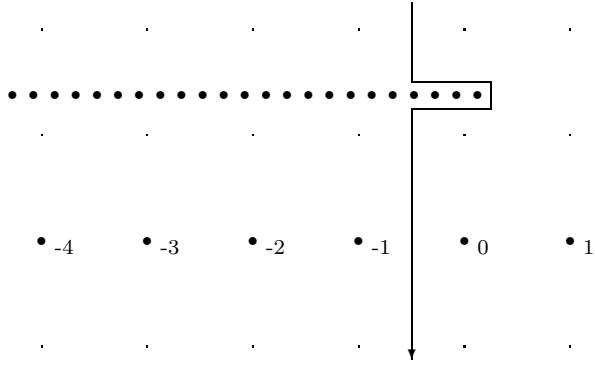
\begin{figure}[h]\label{fig:contour}
\begin{picture}(200,150)(-200,-10)
  \qbezier[5](-100,0)(0,0)(100,0)
  \qbezier[5](-100,40)(0,40)(100,40)
  \qbezier[5](-100,80)(0,80)(100,80)
  \qbezier[5](-100,120)(0,120)(100,120)
  \put(60,40){\circle*{3}}
  \put(65,35){\footnotesize{0}}
  \put(100,40){\circle*{3}}
  \put(105,35){\footnotesize{1}}
  \put(20,40){\circle*{3}}
  \put(25,35){\footnotesize{-1}}
  \put(-20,40){\circle*{3}}
  \put(-15,35){\footnotesize{-2}}
  \put(-60,40){\circle*{3}}
  \put(-55,35){\footnotesize{-3}}
  \put(-100,40){\circle*{3}}
  \put(-95,35){\footnotesize{-4}}
  \put(65,95){\circle*{3}}
  \put(57,95){\circle*{3}}
  \put(49,95){\circle*{3}}
  \put(41,95){\circle*{3}}
  \put(33,95){\circle*{3}}
  \put(25,95){\circle*{3}}
  \put(17,95){\circle*{3}}
  \put(9,95){\circle*{3}}
  \put(1,95){\circle*{3}}
  \put(-7,95){\circle*{3}}
  \put(-15,95){\circle*{3}}
  \put(-23,95){\circle*{3}}
  \put(-31,95){\circle*{3}}
  \put(-39,95){\circle*{3}}
  \put(-47,95){\circle*{3}}
  \put(-55,95){\circle*{3}}
  \put(-63,95){\circle*{3}}
  \put(-71,95){\circle*{3}}
  \put(-79,95){\circle*{3}}
  \put(-87,95){\circle*{3}}
  \put(-95,95){\circle*{3}}
  \put(-103,95){\circle*{3}}
  \put(-111,95){\circle*{3}}
%  \put(-119,95){\circle*{3}}
%  \put(-127,95){\circle*{3}}
%  \put(-135,95){\circle*{3}}
%  \put(-143,95){\circle*{3}}
%  \put(-151,95){\circle*{3}}
%  \put(-159,95){\circle*{3}}
%  \put(-167,95){\circle*{3}}
%  \put(-175,95){\circle*{3}}
%  \put(-183,95){\circle*{3}}
%  \put(-191,95){\circle*{3}}
%
  \put(40,90){\vector(0,-1){95}}
  \put(40,90){\line(1,0){30}}
  \put(70,90){\line(0,1){10}}
  \put(70,100){\line(-1,0){30}}
  \put(40,100){\line(0,1){30}}
\end{picture}
\caption{the contour of integration.}
\end{figure}
%By \cite[Lem.~3.3]{Ho},
By \cite[Lem.~3.3]{Ho}, the integral \eqref{eq:contourint}
is defined and analytic throughout the region $\abs{\arg(-q)}<\pi$.
For $\abs{q}<1/5^5$, we can close the contour to the right; then
\eqref{eq:contourint} equals \eqref{eq:GW_Ifunct}.
For $\abs{q}>1/5^5$, we can close the contour to the left; then
\eqref{eq:contourint} is the sum of residues at
$$s=-1-l,  \ l\ge 0\qquad\qquad {\rm and} \qquad \qquad s=-\frac{H}{z}-\frac{m}{5}, \ m\ge  1.$$
The residues at $s=-1-l$ vanish as they are multiples of $H^4=0$.
Thus the analytic continuation of $I_{\GW}(q,z)$ is given by
evaluating the remaining residues.
Since we have
$$\Res_{ s=-\frac{H}{z}-\frac{m}{5}}\Gamma\left(1+5\frac{H}{z}+5s\right)=-\frac{1}5\frac{(-1)^m}{\Gamma(m)},$$
we get
$$I'_{\GW}(q,z)=
\frac{z}5\frac{\Gamma^5(1+\frac{H}z)}{\Gamma(1+5\frac{H}z)}
\sum_{{0<m\not \equiv 0}}(-1)^{m}\frac{(2\pi\cxi)\xi^m}{e^{-2\pi \cxi \frac{H}z}-\xi^m}
\frac{q^{-\frac{m}{5}}}
{\Gamma(m)\Gamma(1-\frac{m}{5})^5}, \qquad \text{where $\xi=e^{\frac{2\pi\cxi}{5}}$.}
$$
Note that in the above equation we have omitted the terms corresponding to $m\in 5\ZZ$,
because in these cases the
residues vanish.
In terms of the coordinate $t$, which satisfies $t^5=q^{-1}$, we have
(using $\Gamma(x)\Gamma(1-x)=\pi/\sin(\pi x)$)
\begin{align*}
I'_{\GW}(t,z)=&\frac{z}5\frac{\Gamma^5(1+\frac{H}z)}{\Gamma(1+5\frac{H}z)}
\sum_{{0<m\not \equiv 0}}(-1)^{m}
\frac{(2\pi\cxi)\xi^m}{e^{-2\pi \cxi \frac{H}z}-\xi^m}\frac{t^{{m}}}
{\Gamma(m)\Gamma^5(1-\frac{m}{5})}\\
=&\frac{z}5\frac{\Gamma^5(1+\frac{H}z)}{\Gamma(1+5\frac{H}z)}
\sum_{{0<m\not \equiv 0}}
(-1)^{m}\frac{(2\pi\cxi)\xi^m}{e^{-2\pi \cxi \frac{H}z}-\xi^m}
t^m\frac{\Gamma^5(\frac{m}{5})}
{\Gamma(m)}\frac{\pi^5}{\sin^5(\frac{m}{5}\pi)}\\
%=&\frac{z}5\frac{\Gamma^5(1+\frac{H}z)}{\Gamma(1+5\frac{H}z)}
%\sum_{k=1,2,3,4}
%(-1)^k
%\frac{(2\pi\cxi)\xi^k}{e^{-2\pi \cxi \frac{H}z}-\xi^k}
%\frac{\pi^5}{\sin^5(\frac{k}{5}\pi)}
%\sum_{l\ge 0}
%t^{k+5l}\frac{\Gamma^5(\frac{k+5l}{5})}
%{\Gamma(k+5l)}\\
=&\frac{z}5\frac{\Gamma^5(1+\frac{H}z)}{\Gamma(1+5\frac{H}z)}
\sum_{k=1,2,3,4}
(-1)^k
\frac{(2\pi\cxi)\xi^k}{e^{-2\pi \cxi \frac{H}z}-\xi^k}
\frac{1}{\Gamma^5(\frac{k}{5})\Gamma^5(1-\frac{k}{5})}
\sum_{l\ge 0}
t^{k+5l}\frac{\Gamma^5(\frac{k+5l}{5})}
{\Gamma(k+5l)}\\
=&\frac{z}5\frac{\Gamma^5(1+\frac{H}z)}{\Gamma(1+5\frac{H}z)}
\sum_{k=1,2,3,4}
\frac{(-1)^k}{\Gamma^5(1-\frac{k}{5})}
\frac{(2\pi\cxi)\xi^k}{e^{-2\pi \cxi \frac{H}z}-\xi^k}
\sum_{l\ge 0}
\left(\frac{\Gamma(\frac{k+5l}{5})}{\Gamma(\frac{k}{5})}\right)^5
\frac{t^{k+5l}}{\Gamma(k+5l)}\\
=&\frac{z}5\frac{\Gamma^5(1+\frac{H}z)}{\Gamma(1+5\frac{H}z)}
\sum_{k=1,2,3,4}
\frac{(-1)^k}{\Gamma^5(1-\frac{k}{5})}
\frac{(2\pi\cxi)\xi^k}{e^{-2\pi \cxi \frac{H}z}-\xi^k}
\omega_k^{\RW}.
\end{align*}
The Taylor expansions of the functions
\begin{equation*}\label{eq:gf}
g(\rho)=\frac{\xi^k}{e^{-2\pi \cxi \rho}-\xi^k}\qquad \text{ and }\qquad
f(\rho)=\frac{\Gamma^5(1+\rho)}{\Gamma(1+5\rho)}\end{equation*}
read
\begin{equation*}\label{expg}
g(\rho)=\frac{\xi^k}{1-\xi^k} +\frac{\xi^k}{(1-\xi^k)^2}(2\pi\cxi)\rho+
\frac{\xi^k(1+\xi^k)}{2(1-\xi^k)^3}(2\pi\cxi)^2\rho^2+
\frac{\xi^k(1+4\xi^k+\xi^{2k})}{6(1-\xi^k)^4}(2\pi\cxi)^3\rho^3+O(\rho^4).\end{equation*}
and $$f(\rho)=1+C(2\pi\cxi)^2\rho^2-E(2\pi\cxi)^3\rho^3+O(\rho^4),$$
where $C=5/12$ and $E=-\zeta(3)40/(2\pi\cxi)^3$ (with $\zeta(3)$ equal to Ap\'ery's constant)
are related
to the intersection theory
of the quintic three-fold ($C=c_2(X_W)[H]/\deg(W)24$ and $E=\zeta(3)\chi(X_W)/\deg(W)(2\pi\cxi)^3$).
%and can be regarded
%within the orbifold Riemann--Roch formalism\footnote{See for example \cite{Ir}.}.

In this way, by expanding $I'_{\GW}$ in the variable ${H}$, we
get $\wt \omega_1^{\GW}, \wt \omega_2^{\GW}, \wt \omega_3^{\GW}, \wt \omega_4^{\GW}$, the analytic continuations of
$\omega_1^{\GW}, \omega_2^{\GW}, \omega_3^{\GW}, \omega_4^{\GW}$ from \eqref{eq:IforGW}. After
identifying $\phi^k$ with $\fie^k=[H]/5$, the change of basis matrix from
$\omega_1^{\RW}, \omega_2^{\RW}, \omega_3^{\RW}, \omega_4^{\RW}$
to $\wt \omega_1^{\GW}, \wt \omega_2^{\GW}, \wt \omega_3^{\GW}, \wt \omega_4^{\GW}$
is defined column by column as
\begin{equation}\label{eq:FJRWtoGW}
\mathbb U=\begin{pmatrix}\
\frac{(-1)^k(2\pi\cxi)}{\Gamma^5(\frac{5-k}{5})} \frac{\xi^k}{1-\xi^ k} z^{k-1}& \\
\frac{(-1)^k(2\pi\cxi)^2}{\Gamma^5(\frac{5-k}{5})} \frac{\xi^k }{(1-\xi^k)^2} z^{k-2}& k=1,2,3,4\\
\frac{(-1)^k(2\pi\cxi)^3}{\Gamma^5(\frac{5-k}{5})} \left(\frac{\xi^k(1+\xi^k)}{2(1-\xi^k)^3}+C\frac{\xi^k}{(1-\xi^k)}\right) z^{k-3}&
\\
\frac{(-1)^k(2\pi\cxi)^4}{\Gamma^5(\frac{5-k}{5})} \left(\frac{\xi^k(1+4\xi^k+\xi^{2k})}{6(1-\xi^k)^4}+C\frac{\xi^k}{(1-\xi^k)^2}-E\frac{\xi^k}{1-\xi^k}\right)z^{k-4}&
\end{pmatrix},
\qquad \text{where $\xi=e^{\frac{2\pi\cxi}{5}}$.}\end{equation}
Clearly $\mathbb U$ is degree-preserving
with $\deg z=2$.
It can be easily seen via a direct computation
that $\mathbb U$ is symplectic
(\emph{i.e.} we have $\mathbb U^*(-z)\mathbb U(z)=\pmb 1$)\footnote{Choosing
a suitable basis on GW side, \cite{HKQ} shows that
this transformation can be defined entirely inside ${\rm Sp}(4,\QQ(\xi))$. See also
Iritani \cite{Ir} where a formalism based on Grothendieck--Riemann--Roch for
orbifolds is proposed (this uses the above interpretation of
$C$ and $E$ in terms of intersection theory).}.
\end{proof}

\appendix
\setcounter{thm}{0}
\section{Orbifold curves and roots}\label{appendix}
We recall that an
orbifold curve
is a $1$-dimensional stack
of Deligne--Mumford type with nodes as singularities,
a finite number of ordered markings, and possibly
nontrivial
stabilizers only at the markings and at the nodes.
The action on a local parameter $z$ at the markings is given by
$z\mapsto \xi_l z$ (for $l\in \ZZ_{\ge 1})$,
whereas the action on a node $\{xy=0\}$ is
$(x,y)\mapsto (\xi_k x,\xi_k^{-1} y)$ (for $k\in \ZZ_{\ge1}).$
Such a curve $C$ is naturally equipped with a (locally free)
sheaf of logarithmic differentials
$\omega_{\log}$;
\emph{i.e.} the sheaf of sections of the relative dualizing sheaf $\omega$
possibly with poles of order $1$ at the markings. It is natural to
refer to $\omega_{\log}$ rather than $\omega$, because it corresponds to the
pullback of the sheaf of logarithmic differentials from the coarse curve.
Above, $k$ and $l$ denote two orders of stabilizers. It
is automatic for a Deligne--Mumford stack-theoretic curve to have
cyclic stabilizers; however, in the above expression for the local
action at a node, we require that
the product of the factors multiplying $x$ and $y$ is $1$ (this
insures that orbifold curves can be smoothed).
An orbifold curve with $n$ markings is called an $n$-pointed orbifold curve;
the above definition naturally extends to the notion of
``family of orbifold curves over the base scheme $X$'' (or simply
``orbifold curve over $X$''): these are flat morphisms $C\to X$
of relative dimension one from a Deligne--Mumford stack $C$ to $X$ for which we extend
the local descriptions given above (we refer the reader to \cite[Defn.~4.1.2]{AV}).

Note that two stabilizers arising in
an orbifold curve may have different orders; this
feature prevents the moduli stack of orbifold curves from being separated
(see \emph{e.g.} \cite{chstab}) and
motivates the following definition imposing a stability condition
yielding a proper (and separated) stack.
\begin{defn}\label{defn:dstable}
For any
positive integer $d$, a \emph{$d$-stable} $n$-pointed genus-$g$
curve is a proper and geometrically connected orbifold curve $C$ of
genus $g$ with $n$ distinct smooth markings $\sigma_1, \dots, \sigma_n$ such
that
\begin{enumerate}
\item
the corresponding coarse $n$-pointed curve is stable;
\item
all stabilizers (at the nodes and at the markings) have order $d$.
\end{enumerate}
The moduli stack $\MMM_{g,n,d}$ classifying $n$-pointed genus-$g$
$d$-stable curves is proper and smooth and
has dimension $3g-3+n$.
(It differs from the moduli stack of Deligne--Mumford stable
curves only because of the stabilizers over the normal
crossings boundary divisor, see the discussion in
\cite[Thm.~4.1.6]{chstab}.)
\end{defn}

The moduli functor of $d$th roots
via the above notion of $d$-stable curve
is straightforward.
For any nonnegative integer $d$,
the category $\RRR_d$ of $d$th roots of $\omega_{\log}$ over
genus-$g$ $n$-pointed $d$-stable curves, is fibred
over $\MMM_{g,n,d}$
\begin{equation}\label{eq:droots}
\RRR_d\to \MMM_{g,n,d}
\end{equation}
and it is formed by
the following objects: a $d$-stable curve,
$(C\to X, \sigma_1\dots,\sigma_n\in C)$
equipped with a line bundle $L\to C$ and
an isomorphism of line bundles over $C$
$$\fie\colon L^{\otimes d}\to \omega_{\log}.$$
A morphism from $(C'\to X';\sigma'_1,\dots,\sigma'_n\in C'; L'; \fie')$ to
$(C''\to X'';\sigma''_1,\dots,\sigma''_n\in C'';  L'', \fie'')$
is a morphism $\al$ between the
$d$-stable curves and an isomorphism of line bundles
$\rho\colon L'\to \al^*L''$ compatible with $\fie'$ and $\fie''$; \emph{i.e.}
we have $\al^*\fie''\circ \rho^{\otimes d}=\fie'$.
The above category is a proper moduli stack, and the above map
$\RRR_d\to \MMM_{g,n,d}$
is an \'etale cover of $\MMM_{g,n,d}$.
If $n=0$, we have $\RRR_d\neq \varnothing$ if and only if
$d$ divides $2g-2$. For $n>0$, the stack is always nonempty and
the fibre over $\MMM_{g,n,d}$ is the union of $d^{2g-1+n}$ copies of $B\pmmu_d$;
\emph{i.e.} the above morphism has degree $d^{2g-2+n}.$

\begin{rem}
On a $d$-stable curve $C$,
we often need to twist line bundles
by divisors supported on the markings.
For a marking $\sigma$ which has stabilizer $G$ of order $d$ it is convenient to
denote by $({1}/{d})D$ the divisor
$BG\hookrightarrow C$ of degree $1/{d}$ and to write $D$ for
the pullback of the underlying point
on the coarse curve. We refer to $D$ as the
\emph{integer divisor corresponding to a
marking}. This is a good spot to recall the following well known fact, which
uses this convention.
\end{rem}

\begin{lem}\label{lem:exists}
Let $C$ be a $d$-stable
curve (over $\CC$) and let $M$ be a line bundle pulled back from the coarse space
(\emph{e.g.} $\omega_{\log}$ and its tensor powers).
There is an equivalence between two categories
of $d$th roots $L$ on
$d$-stable curves:
\begin{multline*}
\{L\mid L^{\otimes d}\cong M\}
\longleftrightarrow\\
\bigsqcup _{0\le E< d\sum_{i=1}^n{D_i}} \{L\mid L^{\otimes d}\cong M(-E)
\text{ and the stabilizers at the markings
act trivially on $L$}\},\end{multline*} where the
union on the right hand side ranges over all divisors $E$
which are linear combinations of integer divisors $D_i$ corresponding to the markings
with multiplicities in  $\{0,\dots, d-1\}$.

In particular as soon as $n$ is positive or $d$ divides $2g-2$
we have $\RRR_d\neq \varnothing$. Furthermore, in these cases $\RRR_d
\to\MMM_{g,n,d}$ is proper and \'etale.
The fibres of these morphisms to $\MMM_{g,n,d}$
are all
isomorphic to a zero-dimensional stack given by
$d^{2g-1+n}$ copies of $B\pmmu_{d}$ if $n>0$ and $d^{2g}$ copies of $B\pmmu_{d}$
if $n=0$ and $2g-2\in d\ZZ$.
\end{lem}
\begin{proof}
The correspondence is given by the functor $L\mapsto p^*p_*L$ where $p$
is the map forgetting the stabilizers along the markings.
The claim follows easily from \cite{chstab}:
the morphism $\RRR_d
\to\MMM_{g,n,d}$ is proper and all fibres are isomorphic.
A direct analysis of the fibre over a
point representing a smooth curve yields the result.
\end{proof}

{\small

\vspace{.5cm}

\noindent \textsc{Institut Fourier, UMR du CNRS 5582,
Universit\'e de Grenoble 1,
BP 74, 38402,
Saint Martin d'H\`eres,
France}\\
\textit{E-mail address:} \url{chiodo@ujf-grenoble.fr}

\vspace{.3cm}

\noindent \textsc{Department of Mathematics, University of Michigan, Ann Arbor, MI 48109-1109,
USA} and  \textsc{Yangtze Center of Mathematics, Sichuan University, Chengdu,
610064, P.R. China}\\
\textit{E-mail address:} \url{ruan@umich.edu}
}
\end{document}